\newtheorem{theorem}{Theorem}
\newtheorem*{theorem*}{Theorem}
\newtheorem{lemma}{Lemma}
\newtheorem{proposition}{Proposition}
\theoremstyle{definition}
\newtheorem{definition}{Definition}
\theoremstyle{remark}
\numberwithin{equation}{section}
\title[A REDUCIBILITY PROBLEM 
FOR EVEN UNITARY GROUPS: THE DEPTH ZERO CASE]{A REDUCIBILITY PROBLEM 
	FOR EVEN UNITARY GROUPS: THE DEPTH ZERO CASE}
\author{Subha Sandeep Repaka}
\address{Indian Institute of Science Education and Research Tirupati, Karakambadi Road, Rami Reddy Nagar, Mangalam, Tirupati, Andhra Pradesh-517507, INDIA}
\email{sandeep.repaka@gmail.com}
\begin{document}
	
	\date{\today}	
	\setcounter{tocdepth}{1}
	\date{\today}
	\keywords{}
	\subjclass[2010]{Primary 22E50, Secondary 11F70}
	
	\begin{abstract}
		We study a problem concerning parabolic induction in certain $p$-adic unitary groups. More precisely, for $E/F$ a quadratic extension of $p$-adic fields the associated unitary group $G=\mathrm{U}(n,n)$ contains a parabolic subgroup $P$ with Levi component $L$ isomorphic to $\mathrm{GL}_n$. Let $\pi$ be an irreducible supercuspidal representation of $L$ of depth zero. We use Hecke algebra methods to determine when the parabolically induced representation $\iota_P^G \pi$ is reducible.
	\end{abstract}
	
	\maketitle
	\tableofcontents
	
\section{Introduction}

A central feature of reductive $p$-adic groups is that there are irreducible representations that never occur as subrepresentations of parabolically induced representations. These are the supercuspidal representations. Suppose $\Pi$ is an irreducible representation of a reductive $p$-adic group $G$. By insights of Harish-Chandra and others it is known that there is a parabolic subgroup of $G$ and an irreducible supercuspidal representation $\pi$ of a Levi component of the parabolic subgroup such that $\Pi$ occurs in the representation obtained from $\pi$ via parabolic induction.\par

Thus, a core problem in $p$-adic representation theory is to understand when
and how parabolically induced representations decompose, especially when the
inducing representation is supercuspidal. The problem of when the parabolically induced representation decompose is what we study in a very special situation when $G=\mathrm{U}(n,n)$ is even unitary group over non- Archimedean local field $E$ and $\pi$ is a smooth irreducible supercuspidal  depth zero representation of Siegel Levi component $L \cong \mathrm{GL}_n(E)$ of Siegel parabolic subgroup $P$ of $G$. The terms $P, L, \pi, \mathrm{U}(n,n)$ are described in much detail later in the paper. We use Hecke algebra methods to determine when the parabolically induced representation $\iota_P^G \pi$ is reducible. Harish-Chandra tells us to  look not at an individual $\iota_P^G \pi$ but at the family $\iota_P^G (\pi \nu)$ as $\nu$ varies through the unramfied characters of $L \cong {\rm GL}_n(E)$. Note that the functor $\iota_P^G$ and unramified characters of $L$ are also described in greater detail later in the paper.\par  

The approach taken in solving the above problem in this paper is similar to the approach taken by Kutzko and Morris in \cite{MR2276353} for the split classical groups over non-Archimedean local fields, but the computations for the even unitary group $\mathrm{U}(n,n)$ over non-Archimedean local fields done in this paper are more involved than in \cite{MR2276353}.\par

Before going any further, let me describe how $\mathrm{U}(n,n)$ over  non-Archimedean local fields looks like. Let $E/F$ be a quadratic Galois extension of non-Archimedean local fields where char $F \neq 2$. Write $-$ for the non-trivial element of $\mathrm{Gal}(E/F)$. The group $G=\mathrm{U}(n,n)$ is given by

\[
\mathrm{U}(n,n)= \{g \in \mathrm{GL}_{2n}(E) \mid {^t}{\overline g}Jg=J\}
\]
for $J=
\begin{bmatrix}
0 & 1_n\\
1_n & 0 \\
\end {bmatrix}$ where each block is of size $n$ and for $g=(g_{ij})$ we write $\overline{g}= (\overline{g}_{ij})$. We write $\mathfrak{O}_E$ and $\mathfrak{O}_F$ for the ring of integers in $E$ and $F$ respectively. Similarly, $\mathbf{p}_E$ and $\mathbf{p}_F$ denote the maximal ideals in $\mathfrak{O}_E$ and $\mathfrak{O}_F$ and $k_E=\mathfrak{O}_E/\mathbf{p}_E$ and $k_F=\mathfrak{O}_F/\mathbf{p}_F$ denote the residue class fields of $\mathfrak{O}_E$ and $\mathfrak{O}_F$. Let $\left|k_F\right|= q =p^r$ for some odd prime $p$ and some integer $r \geqslant 1$.\par  

There are two kinds of extensions of $E$ over $F$. One is the unramified extension and the other one is the ramified extension. In the unramified case, we can choose uniformizers $\varpi_E,\varpi_F$ in $E, F$ such that $\varpi_E=\varpi_F$ so that we have $[k_E:k_F]=2 , \mathrm{Gal}(k_E/k_F) \cong \mathrm{Gal}(E/F)$. As $\varpi_E=\varpi_F$, so $\overline{\varpi}_E=\varpi_E$  since  $\varpi_F \in F$. As $k_F=\mathbb{F}_q$,  so $k_E= \mathbb{F}_{q^2}$ in this case. In the ramified case, we can choose uniformizers $\varpi_E,\varpi_F$ in $E, F$ such that $\varpi_E^2=\varpi_F$ so that we have $[k_E:k_F]=1,\mathrm{Gal}(k_E/k_F)=1$. As $\varpi_E^2=\varpi_F$, we can further choose $\varpi_E$ such that $\overline{\varpi}_E= -\varpi_E$. As $k_F=\mathbb{F}_q$, so $k_E= \mathbb{F}_{q}$ in this case. \par

We write $P$  for the Siegel parabolic subgroup of $G$. Write $L$ for the Siegel Levi component of $P$ and $U$ for the unipotent radical of $P$. Thus $P= L \ltimes U$ with  

\[
L= \Bigg\lbrace \begin{bmatrix}
a & 0\\
0 & {^t}{\overline {a}}{^{-1}}
\end{bmatrix} \mid a \in \mathrm{GL}_n(E) \Bigg \rbrace
\]

and

\[
U= \Bigg \lbrace \begin{bmatrix}
1_n & X \\
0  & 1_n 
\end{bmatrix} \mid  X \in \mathrm{M}_n(E), X + {^t}{\overline{X}}=0 \Bigg \rbrace.
\]

Let $\overline P= L \ltimes \overline U$ be the $L$-opposite of $P$ where 

\[
\overline{U}= \Bigg \lbrace \begin{bmatrix}
1_n & 0 \\
X  & 1_n 
\end{bmatrix} \mid  X \in \mathrm{M}_n(E), X + {^t}{\overline{X}}=0 \Bigg \rbrace.
\]\par

Let $K_0=\mathrm{GL}_n(\mathfrak{O}_E)$ and $K_1=1+\varpi_E \mathrm{M}_n(\mathfrak{O}_E)$. Note $K_1=1+\varpi_E \mathrm{M}_n(\mathfrak{O}_E)$ is the kernel of the surjective group homomorphism \[(g_{ij}) \longrightarrow (g_{ij} + \mathbf{p_E}) \colon \mathrm{GL}_n(\mathfrak{O}_E) \longrightarrow \mathrm{GL}_n(k_E)\] \par

Let

\[
\mathfrak{P}_{0,1}= \Bigg \lbrace
\begin{bmatrix}
a & 0\\ 
0 & {^t}{\overline{a}}{^{-1}}\\
\end{bmatrix} \mid a \in K_1=1 + \varpi_E \mathrm{M}_n(\mathfrak{O}_E)\Bigg \rbrace.
\]

Clearly, $\mathfrak{P}_{0,1} \cong K_1$. We say $\pi$ is a depth zero representation of Siegel Levi component $L$ of $P$ if $\pi^{\mathfrak{P}_{0,1}} \neq 0$. Let $(\rho, V)$ be a smooth representation of $H$ where $H$ is a subgroup of $G$. The smoothly induced representation from $H$ to $G$ is denoted by $Ind_H^G(\rho, V)$ or $Ind_H^G(\rho)$. Let us denote $c$-$Ind_H^G(\rho, V)$ or  $c$-$Ind_H^G(\rho)$ for smoothly induced compact induced representation from $H$ to $G$. The normalized induced representation from $P$ to $G$ is denoted by $\iota_P^G(\rho, V)$ or $\iota_P^G(\rho)$ where $\iota_P^G(\rho)=Ind_P^G(\rho \otimes \delta_P^{1/2})$. Note that here $\delta_P$ is a character of $P$ given by $\delta_P(p)=\|det(Ad \, p)|_{\mathrm{Lie} \, U}\|_F$ for $p \in P$ and $\mathrm{Lie} \, U$ is the Lie-algebra of $U$. We work with normalized induced representations rather than induced representations in this paper as results look more appealing.\par 

Write $G^{\circ}$ for the smallest subgroup of $G$ containing the compact open subgroups of $G$. We say a character $\nu \colon G \longrightarrow \mathbb{C}^{\times}$ is unramified if $\nu|_{G^{\circ}}=1$. Let the group of unramified characters of $G$ be denoted by $\mathrm{X}_{nr}(G)$.\par
 
\subsection{Question}
The question we answer in this paper is, given $\pi$ an irreducible supercuspidal representation of $L$ of depth zero, we look at the family of representations $\iota_P^G(\pi\nu)$  for $\nu \in \chi_{nr}(L)$ and we want to determine the set of such $\nu$ for which this induced representation is reducible for both ramified and unramified extensions. By general theory, this is a finite set.\par

As $\pi$ is an irreducible supercuspidal depth zero representation of $L$ which is isomorphic to $\mathrm{GL}_n(E)$, so $\pi|_{K_0}$ contains an irreducible representation $\rho_0$ of $K_0$ such that $\rho_0|_{K_1}$ is trivial. So $\rho_0$ can be viewed as an irreducible representation of $K_0/K_1\cong \mathrm{GL}_n(k_E)$  inflated to $K_0=\mathrm{GL}_n(\mathfrak{O}_E)$. The representation $\rho_0$ is supercuspidal by (a very special case of) A.1 Appendix \cite{MR1235019}.\par

We now define the Siegel parahoric subgroup $\mathfrak{P}$ of $G$ which is given by:

\begin{center}
	$\mathfrak{P} =
	\begin{bmatrix}
	\mathrm{GL}_n(\mathfrak{O}_E)    & \mathrm{M}_n(\mathfrak{O}_E) \\
	\mathrm{M}_n(\mathbf{p}_E)  & \mathrm{GL}_n(\mathfrak{O}_E) 
	\end{bmatrix}
	\bigcap \mathrm{U}(n,n).$\\
\end {center}

We have $\mathfrak{P}= (\mathfrak{P}\cap\overline{U})(\mathfrak{P}\cap L)(\mathfrak{P}\cap U) $(Iwahori factorization of $\mathfrak{P}$). Let us denote $(\mathfrak{P}\cap\overline{U})$ by $\mathfrak{P}_{-}$, $(\mathfrak{P}\cap U)$ by $\mathfrak{P}_{+}$, $(\mathfrak{P}\cap L)$ by $\mathfrak{P}_{0}$. Thus

	\begin{center}
		$ \mathfrak{P}_{0}=
		\Bigg\lbrace
		\begin{bmatrix}
		a & 0\\ 
		0 & {^t}{\overline{a}}{^{-1}}\\
		\end{bmatrix}\mid a \in \mathrm{GL}_n(\mathfrak{O}_E)
		\Bigg \rbrace$,
		\end {center}

		\begin{center}
			$\mathfrak{P}_{+}=
			\Bigg\lbrace
			\begin{bmatrix}
			1_n & X\\ 
			0 & 1_n\\
			\end{bmatrix} \mid X\in \mathrm{M}_n(\mathfrak{O}_E), X + {^t}{\overline{X}}=0
			\Bigg \rbrace$,
			\end {center}

               \begin{center}
	        	$\mathfrak{P}_{-}=
				\Bigg\lbrace
				\begin{bmatrix}
				1_n & 0\\ 
				X & 1_n\\
				\end{bmatrix} \mid X\in \mathrm{M}_n(\mathbf{p}_E), X + {^t}{\overline{X}}=0
				\Bigg \rbrace$.
				\end {center}\par
				
By Iwahori factorization of $\mathfrak{P}$ we have $\mathfrak{P}= (\mathfrak{P}\cap\overline{U})(\mathfrak{P}\cap L)(\mathfrak{P}\cap U)=\mathfrak{P}_{-}\mathfrak{P}_{0}\mathfrak{P}_{+}$. As $\rho_0$ is a representation of $K_0$, it can also be viewed as a representation of $\mathfrak{P}_{0}$. This is because $\mathfrak{P}_{0}\cong K_0$. In section \ref{sec_3}, we show that $\rho_0$ can be extended to a representation $\rho$ of $\mathfrak{P}$. If $j \in \mathfrak{P}$ has Iwahori factorization $j=j_{-}j_{0}j_{+}$ where $j_{-} \in \mathfrak{P}_{-}, j_{0} \in \mathfrak{P}_{0}, j_{+} \in \mathfrak{P}_{+}$ then $\rho$ is defined by $\rho(j)= \rho_0(j_0)$ for $j \in \mathfrak{P}$.\par

In section \ref{sec_4}, we compute the normalizer of $\mathfrak{P}$ in $G$ which is given by  $N_G(\mathfrak{P}_0)$ and it is independent of whether we are working in ramified or unramified case. The explicit calculation of $N_G(\mathfrak{P}_0)$ was not done in \cite{MR2276353}. By the work of Green or as a very special case of the Deligne-Lusztig construction, irreducible cuspidal representations of $\mathrm{GL}_n(k_E)$ are parametrized by the regular characters of degree $n$ extensions of $k_E$. We write $\tau_\theta$ for the irreducible cuspidal representation $\rho_0$ that corresponds to a regular character $\theta$.\par

Let $Z(L)$ denote the center of $L$. Hence 

\[
Z(L)=\Bigg\lbrace
\begin{bmatrix}
a1_n  & 0 \\
0    & \overline{a}^{-1}1_n \\
\end{bmatrix} \mid a \in E^{\times} \Bigg \rbrace.
\]\par

let \[ N_G(\rho_0)=\{ m \in N_G(\mathfrak{P}_0) \mid {\rho_0}\simeq \rho_0^m \}.\] 

We shall show in section \ref{Sec_5} that in the unramified case if $n$ is even then $N_G(\rho_0)= Z(L) \mathfrak{P}_0$ and if $n$ is odd then $N_G(\rho_0)= Z(L) \mathfrak{P}_0 \rtimes \langle J \rangle$. In the ramified case we show, if $n$ is odd then $N_G(\rho_0)= Z(L) \mathfrak{P}_0$ and if $n$ is even then $N_G(\rho_0)= Z(L) \mathfrak{P}_0 \rtimes \langle J \rangle$. Also the explicit calculation of $N_G(\rho_0)$ for the ramified and unramified cases was not done in \cite{MR2276353}. We use $N_G(\rho_0)$ to calculate the structure of the Hecke Algebra $\mathcal{H}(G, \rho)$ for unramified($n$ is odd) and ramified($n$ is even) cases in section \ref{sec_6}.\par

Let $w_0 =J$ and $w_1
=\begin{bmatrix}
0 & {\overline\varpi_E}^{-1}1_n\\
\varpi_E 1_n & 0
\end{bmatrix}$. Now 

\[ w_0 w_1=\zeta=
\begin{bmatrix}
\varpi_E 1_n & 0\\
0 & {\overline\varpi_E}^{-1}1_n
\end{bmatrix}.\]\par

The Hecke algebra $\mathcal{H}(G, \rho)$ in the unramified case and $n$ is odd is given by:  

\[
\mathcal{H}(G,\rho)= \left\langle \phi_i \colon G \to End_{\mathbb{C}}(\rho^{\vee})\; \middle| \;
\begin{varwidth}{\linewidth} 
$\phi_i$ is supported on 
$\mathfrak{P}w_i\mathfrak{P}$\\
$\,\text{and} \, \phi_i(pw_ip')= \rho^{\vee}(p)\phi_i(w_i)\rho^{\vee}(p')$\\
where $p,p' \in \mathfrak{P}, \, i=0,1$
\end{varwidth}
\right\rangle
\] where $\phi_i$ has support $\mathfrak{P} w_i \mathfrak{P}$  and $\phi_i$ satisfies the relation:

\begin{center}
	$\phi_i^2= q^{n}+ (q^{n}-1) \phi_i$ for $i=0,1$.
\end{center} \par

However, The Hecke algebra $\mathcal{H}(G, \rho)$ in the ramified case and $n$ is even is given by:  

\[
\mathcal{H}(G,\rho)= \left\langle \phi_i \colon G \to End_{\mathbb{C}}(\rho^{\vee})\; \middle| \;
\begin{varwidth}{\linewidth} 
$\phi_i$ is supported on 
$\mathfrak{P}w_i\mathfrak{P}$\\
$\,\text{and} \, \phi_i(pw_ip')= \rho^{\vee}(p)\phi_i(w_i)\rho^{\vee}(p')$\\
where $p,p' \in \mathfrak{P}, \, i=0,1$
\end{varwidth}
\right\rangle
\] where $\phi_i$ has support $\mathfrak{P} w_i \mathfrak{P}$  and $\phi_i$ satisfies the relation:

\begin{center}
	$\phi_i^2= q^{n/2}+ (q^{n/2}-1) \phi_i$ for $i=0,1$.
\end{center} \par

The approach taken in \cite{MR2276353} is by root sytems for the calculation of the Hecke algbera $\mathcal{H}(G, \rho)$  for the split classical groups.\par

Let 

\[
N_L(\rho_0)=\lbrace m \in N_L(\mathfrak{P}_0) \mid \rho_0^m \simeq \rho_0 \rbrace. 
\] \par

We use $N_L(\rho_0)$ to dermine the structure of the Hecke Algebra $\mathcal{H}(L, \rho_0)$ which is independent of whether we are in the ramified or unramfied case. Let $V$ be vector space corresponding to $\rho_0$. We shall show that there exists an element $\alpha$ in  $\mathcal{H}(L, \rho_0)$ such that $\text{supp}(\alpha)= \mathfrak{P}_0\zeta$ and $\alpha(\zeta)=1_{V^{\vee}}$. We show in section \ref{sec_7} that the Hecke algebra $\mathcal{H}(L, \rho_0)= \mathbb{C}[\alpha, \alpha^{-1}]$ which is isomorphic to the $\mathbb{C}$-algebra $\mathbb{C}[x,x^{-1}]$. Then we determine the structure of simple $\mathbb{C}[\alpha,\alpha^{-1}]$-modules in section \ref{sec_8}.\par 

The categories $\mathfrak{R}^{s_L}(L), \mathfrak{R}^{s}(G)$ where $s_L=[L,\pi]_L$ and $s= [L,\pi]_G$ are described later in this paper. The category $\mathcal{H}(G,\rho)-Mod$ is the category of $\mathcal{H}(G,\rho)$-modules and  $\mathcal{H}(L,\rho_0)-Mod$ be the category of $\mathcal{H}(L,\rho_0)$-modules. The functor $\iota_P^G$ was defined earlier. \par

For $s_L=[L,\pi]_L$, the representation $\pi \nu \in \mathfrak{R}^{s_L}(L)$. The functor $m_L \colon \mathfrak{R}^{s_L}(L) \longrightarrow  \mathcal{H}(L,\rho_0)-Mod$ given by $m_L(\pi \nu)= \mathrm{Hom}_{\mathfrak{P_0}}(\rho_0, \pi \nu)$ is an equivalance of categories. The representation $\pi \nu \in \mathfrak{R}^{s_L}(L)$  being irreducible, it corresponds to a simple $\mathcal{H}(L,\rho_0)$-module under the functor $m_L$. The simple $\mathcal{H}(L,\rho_0)$-module $m_L(\pi \nu)$ is calculated in section \ref{sec_9}. Let $f \in m_L(\pi \nu), \gamma \in \mathcal{H}(L,\rho_0)$ and $w \in W$. The action of $\mathcal{H}(L,\rho_0)$ on $m_L(\pi \nu)$ is given by $(\gamma. f)(w)= \int_L \pi(l) \nu(l)f(\gamma^{\vee}(l^{-1})w)dl$. Here $\gamma^{\vee}$ is defined on $L$ by $\gamma^{\vee}(l^{-1})=\gamma(l)^{\vee}$ for $l \in L$.  For $s= [L,\pi]_G$, the representation $\iota_P^G (\pi \nu) \in  \mathfrak{R}^{s}(G)$. The functor $m_G \colon \mathfrak{R}^{s}(G) \longrightarrow  \mathcal{H}(G,\rho)-Mod$ is given by $m_G( \iota_P^G (\pi \nu))= \mathrm{Hom}_{\mathfrak{P}}(\rho, \iota_P^G (\pi\nu))$ is an equivalance of categories. \par  

There is an algbera embedding $t_P \colon \mathcal{H}(L,\rho_0) \longrightarrow \mathcal{H}(G, \rho)$ described in section \ref{sec_9}. The map $t_P$ induces the  functor $(t_P)_* \colon  \mathcal{H}(L,\rho_L)-Mod \longrightarrow \mathcal{H}(G,\rho)-Mod$ which is given by:

For $M$ an $\mathcal{H}(L, \rho_0)$-module,

\[(t_P)_{*}(M)= \mathrm{Hom}_{\mathcal{H}(L, \rho_0)}(\mathcal{H}(G, \rho),M)\] where $\mathcal{H}(G, \rho)$ is viewed as a $\mathcal{H}(L, \rho_0)$-module via $t_P$. The action of $\mathcal{H}(G,\rho)$ on $(t_P)_*(M)$ is given by 

\[h'\psi(h_1)=\psi(h_1 h')\] where $\psi \in (t_P)_*(M), h_1, h' \in \mathcal{H}(G, \rho)$.\par.\par

From Corollary 8.4 in \cite{MR1643417}, the functors $m_L, m_G, Ind_P^G, (t_P)_*$  fit into the following commutative diagram:

\[
\begin{CD}
\mathfrak{R}^{[L,\pi]_G}(G)    @>m_G>>    \mathcal{H}(G,\rho)-Mod\\
@AInd_P^GAA                                    @A(t_P)_*AA\\
\mathfrak{R}^{[L,\pi]_L}(L)    @>m_L>>     \mathcal{H}(L,\rho_0)-Mod
\end{CD}
\] \par

We further show in section \ref{sec_9} that there is an algebra embedding $T_P \colon \mathcal{H}(L,\rho_0) \longrightarrow \mathcal{H}(G, \rho)$ given by $T_P(\phi)= t_P(\phi \delta_P^{-1/2})$ for $\phi \in \mathcal{H}(L, \rho_0)$ so that the following diagram commutes:

\[
\begin{CD}
\mathfrak{R}^{[L,\pi]_G}(G)    @>m_G>>    \mathcal{H}(G,\rho)-Mod\\
@A\iota_P^GAA                                    @A(T_P)_*AA\\
\mathfrak{R}^{[L,\pi]_L}(L)    @>m_L>>     \mathcal{H}(L,\rho_0)-Mod
\end{CD}
\]\par

For $M$ an $\mathcal{H}(L, \rho_0)$-module,

\[(T_P)_{*}(M)= \mathrm{Hom}_{\mathcal{H}(L, \rho_0)}(\mathcal{H}(G, \rho),M)\] where $\mathcal{H}(G, \rho)$ is viewed as a $\mathcal{H}(L, \rho_0)$-module via $T_P$. The action of $\mathcal{H}(G,\rho)$ on $(T_P)_*(M)$ is given by 

\[h'\psi(h_1)=\psi(h_1 h')\] where $\psi \in (T_P)_*(M), h_1, h' \in \mathcal{H}(G, \rho)$.\par

Define $g_i= q^{-n}\phi_i$ for $i=0,1$ in the unramified case where $n$ is odd and $g_i= q^{-n/2}\phi_i$ for $i=0,1$ in the ramified case where $n$ is even. We shall show in section \ref{sec_9} that $g_0*g_1= T_P(\alpha)$ for both the cases. From this result and Proposition 1.6 in \cite{MR2531913} Theorem \ref{the_1} follows.\par

As $Z(L)\mathfrak{P}_0= \coprod_{n \in \mathbb{Z}} \mathfrak{P}_0 \zeta^n$, so we can extend $\rho_0$ to a representation $\widetilde{\rho_0}$ of $Z(L)\mathfrak{P}_0$ via $\widetilde{\rho_0}(\zeta^k j)=\rho_0(j)$ for $j \in \mathfrak{P}_0, k \in \mathbb{Z}$. By the standard Mackey theory arguments, we show in the paper that $\pi$= $c$-$Ind_{Z(L)\mathfrak{P}_0}^L \widetilde{\rho_0}$ is a smooth irreducible supercuspidal depth zero representation of $L$. Also note that any arbitrary depth zero irreducible supercuspidal cuspidal representation of $L$ is an unramified twist of $\pi$. To that end, we will answer the question which we posed earlier in this paper and prove the following result.

\begin{theorem} \label{the_1}
	Let $G= \mathrm{U}(n,n)$. Let $P$ be the Siegel parabolic subgroup of $G$ and $L$ be the Siegel Levi component of $P$. Let $\pi$= $c$-$Ind_{Z(L)\mathfrak{P}_0}^L \widetilde{\rho_0}$ be a smooth irreducible supercuspidal depth zero representation of $L \cong \mathrm{GL}_n(E)$ where $\widetilde{\rho_0}(\zeta^k j)=\rho_0(j)$ for $j \in \mathfrak{P}_0, k \in \mathbb{Z}$ and  $\rho_0=\tau_{\theta}$ for some regular character $\theta$ of $l^{\times}$ with $[l:k_E]=n$ and $|k_F|= q$. Consider the family $\iota_P^G(\pi\nu)$ for $\nu \in \mathrm{X}_{nr}(L)$.
	\begin {enumerate}
	\item For $E/F$ is unramified, $\iota_P^G(\pi \nu)$ is reducible $\Longleftrightarrow n$ is odd, $\theta^{q^{n+1}}= \theta^{-q}$ and $\nu(\zeta) \in \{q^n,q^{-n},-1\}$.
	\item For $E/F$ is ramified, $\iota_P^G(\pi \nu)$ is reducible $\Longleftrightarrow n$ is even, $\theta^{q^{n/2}}=\theta^{-1}$ and $\nu(\zeta) \in \{q^{n/2},q^{-n/2},-1\}$.
\end{enumerate}
\end{theorem}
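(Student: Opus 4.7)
The strategy is to transport the reducibility question across the commutative diagram
\[
\begin{CD}
\mathfrak{R}^{[L,\pi]_G}(G)    @>m_G>>    \mathcal{H}(G,\rho)\text{-}\mathrm{Mod}\\
@A\iota_P^GAA                                    @A(T_P)_*AA\\
\mathfrak{R}^{[L,\pi]_L}(L)    @>m_L>>     \mathcal{H}(L,\rho_0)\text{-}\mathrm{Mod}
\end{CD}
\]
so that $\iota_P^G(\pi\nu)$ is reducible if and only if the induced module $(T_P)_{*}(M)$ is reducible, where $M=m_L(\pi\nu)$. Since $\mathcal{H}(L,\rho_0)\cong\mathbb{C}[\alpha,\alpha^{-1}]$, the simple module $M$ attached to the irreducible representation $\pi\nu$ is one\nobreakdash-dimensional, so $\alpha$ acts on $M$ by a single scalar $z=z(\nu,\theta)\in\mathbb{C}^{\times}$.

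The first main step is to compute $z$ explicitly. Using the formula $(\gamma\cdot f)(w)=\int_{L}\pi(l)\nu(l)f(\gamma^{\vee}(l^{-1})w)\,dl$ for the $\mathcal{H}(L,\rho_0)$\nobreakdash-action on $m_L(\pi\nu)$, and the fact that $\pi=c\text{-}\mathrm{Ind}_{Z(L)\mathfrak{P}_0}^{L}\widetilde{\rho_0}$ with $\widetilde{\rho_0}(\zeta)=\mathrm{id}$ on $V$, one evaluates the integral on the coset $\mathfrak{P}_0\zeta$ supporting $\alpha$. This should yield an expression of the form $z=c(\theta)\,\nu(\zeta)\,\delta_P(\zeta)^{1/2}$ for some constant $c(\theta)$ coming from $\widetilde{\rho_0}$. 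Since $\delta_P(\zeta)^{1/2}$ is a fixed power of $q$, this reduces the reducibility condition to a polynomial equation in $\nu(\zeta)$.

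The second main step is to apply Proposition~1.6 of \cite{MR2531913} to the two\nobreakdash-generator Hecke algebra $\mathcal{H}(G,\rho)$. In the cases where $\mathcal{H}(G,\rho)=\langle\phi_0,\phi_1\rangle$ with $\phi_i^2=Q+(Q-1)\phi_i$ for both $i$ (the unramified/$n$ odd case with $Q=q^n$ and the ramified/$n$ even case with $Q=q^{n/2}$), that proposition characterises when $(T_P)_{*}(M)$ splits: since the relation $g_0\ast g_1=T_P(\alpha)$ is established in section~\ref{sec_9}, reducibility of $(T_P)_{*}(M)$ is equivalent to $z\in\{Q,Q^{-1},-1\}$. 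Undoing the substitution of the first step recovers the sets $\{q^n,q^{-n},-1\}$ and $\{q^{n/2},q^{-n/2},-1\}$ for $\nu(\zeta)$, modulo checking that $c(\theta)$ is a harmless factor in the allowed regime.

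The final step is to explain why, in the complementary cases, $\iota_P^G(\pi\nu)$ is always irreducible. Here the Hecke algebra degenerates: when $n$ is even in the unramified case or $n$ is odd in the ramified case, the computation of $N_G(\rho_0)=Z(L)\mathfrak{P}_0$ in section~\ref{Sec_5} forces $\mathcal{H}(G,\rho)$ to be isomorphic to $\mathcal{H}(L,\rho_0)\cong\mathbb{C}[\alpha,\alpha^{-1}]$, and likewise when the conditions $\theta^{q^{n+1}}=\theta^{-q}$ (unramified) or $\theta^{q^{n/2}}=\theta^{-1}$ (ramified) fail, one shows that $\pi^{J}\not\simeq\pi$, so no nontrivial $G$-intertwiner on $\iota_P^G(\pi\nu)$ exists by standard Harish\nobreakdash-Chandra theory. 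In both situations simple $\mathcal{H}(L,\rho_0)$-modules induce to simple $\mathcal{H}(G,\rho)$-modules, hence $\iota_P^G(\pi\nu)$ remains irreducible. The expected main obstacle is precisely the explicit determination of the scalar $z(\nu,\theta)$ and the verification that $g_0\ast g_1=T_P(\alpha)$ as claimed, because this ties together the Iwahori factorisation of $\mathfrak{P}$, the action of $\zeta$ on $V$ via $\widetilde{\rho_0}$, and the normalisation $\delta_P^{-1/2}$ built into $T_P$; once these are pinned down, the rest of the argument is an application of the Hecke\nobreakdash-algebra module theory already in the literature.
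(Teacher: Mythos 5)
Your overall strategy — transport across the commutative diagram, identify $m_L(\pi\nu)$ as a one-dimensional $\mathbb{C}[\alpha,\alpha^{-1}]$-module, use $g_0*g_1=T_P(\alpha)$ together with Proposition~1.6 of \cite{MR2531913}, and treat the remaining cases by showing the Hecke algebra is abelian — is exactly the one the paper uses. But there is a bookkeeping error in your first step that would give the wrong reducibility set if followed literally. The scalar by which $\alpha$ acts on $m_L(\pi\nu)=\mathrm{Hom}_{\mathfrak{P}_0}(\rho_0,\pi\nu)$ is \emph{exactly} $\nu(\zeta)$: since $\widetilde{\rho_0}(\zeta^kj)=\rho_0(j)$, one has $\widetilde{\rho_0}(\zeta)=1_V$ so your $c(\theta)$ equals $1$, and there is no $\delta_P(\zeta)^{1/2}$ factor at this stage. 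The modulus-character twist lives entirely in the definition $T_P(\phi)=t_P(\phi\delta_P^{-1/2})$, which is precisely what makes the square with $\iota_P^G$ (rather than $\mathrm{Ind}_P^G$) commute, and it is this twist that makes $T_P(\alpha)(\zeta)=\delta_P^{1/2}(\zeta)1_{W^\vee}$ match $g_0g_1$. If instead you declare $z=\nu(\zeta)\delta_P(\zeta)^{1/2}$ and then intersect $z$ with $\{Q,Q^{-1},-1\}$, you obtain, in the unramified case, $\nu(\zeta)\in\{q^{n+n^2},q^{-n+n^2},-q^{n^2}\}$, which is not the set in the theorem. So the "undoing the substitution" step does not go through as written; the fix is to run the computation in \S\ref{sec_9} to see $m_L(\pi\nu)\cong\mathbb{C}_{\nu(\zeta)}$ cleanly, and let $T_P$ carry the modulus twist.

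On the complementary cases your route is a genuine alternative to the paper's. You propose to argue irreducibility when $\rho_0^J\not\simeq\rho_0$ (equivalently, $n$ of the wrong parity, or the $\theta$-condition failing) via Harish-Chandra intertwining theory: no $w\in W(G,L)$ fixes the cuspidal support, so $\iota_P^G(\pi\nu)$ is irreducible. The paper stays inside the Hecke-algebra framework: here $N_G(\rho_0)=Z(L)\mathfrak{P}_0$, so $\mathfrak{I}_G(\rho)\subseteq\mathfrak{P}L\mathfrak{P}$, and Corollary~6.5 of \cite{MR1486141} gives that $T_P\colon\mathcal{H}(L,\rho_0)\to\mathcal{H}(G,\rho)$ is an algebra \emph{isomorphism}, so $(T_P)_*$ preserves simplicity. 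Both routes are valid. The Hecke route has the virtue of using the same machinery as the reducible cases and needing no appeal to the analytic theory of intertwining operators; your route is conceptually transparent but needs a precise citation and a slight strengthening: the relevant nonconjugacy statement is $\pi^J\not\simeq\pi\nu'$ for every unramified $\nu'$ of $L$, i.e.\ on the inertial class, which is implied by $\rho_0^J\not\simeq\rho_0$, not merely $\pi^J\not\simeq\pi$.
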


 Note that the composition series for our family of induced representations have length one or two. We induce from unramified twists of an irreducible unitary supercuspidal representation. If the unramified twist is unitary, then the induced representation is also unitary and so is a direct sum of its composition factors. If the unramified twist is not unitary and the induced representation is reducible, then there is a unique subrepresentation and a unique quotient.\par

 In \cite{MR1266747}, Goldberg computed the reducibility points of $\iota_{P}^{G}(\pi)$ by computing the poles of certain $L$-functions attached to the representation $\pi$ of $\mathrm{GL}_n(E)$. Note however that the base field $F$ is assumed to be of characteristic $0$ in \cite{MR1266747}, whereas we assumed characteristic of $F \neq 2$. In \cite{MR1266747} no restriction on the depth of the representation $\pi$ is there, while in this paper we have assumed depth of the representation $\pi$ to be zero. The final results obtained in \cite{MR1266747} are in terms of matrix coefficents of $\pi$ whereas our results are in terms of the unramified characters of $L$.\par

In \cite{heiermann_2011} and \cite{heiermann_2017}, Heiermann computed the structure of the Hecke algebras which we look at and much more and makes a connection with Langlands parameters. But his results are not explicit. They do not give the precise values of the parameters in the relevant Hecke algebras.\par  

Let $\eta$ be the automorphism of $G$ taking $x$ to ${}^t\overline{x}^{-1}$. In \cite{murnaghan_repka_1999}, Murnaghan and Repka computed the reducibility points of $\iota_{P}^{G}(\pi)$ for $\pi$ an irreducible unitary supercuspidal representation of $L$ and $\pi$ equivalent to $\pi \circ \eta$. Note that the base field $F$ is assumed to be of characteristic $0$ in \cite{murnaghan_repka_1999}, whereas we assumed characteristic of $F \neq 2$. In \cite{murnaghan_repka_1999} no restriction on the depth of the representation $\pi$ is there, while in this paper we have assumed depth of the representation $\pi$ to be zero.\par 

In \cite{Thomas2014} looked at a similar reducibility problem. Some of the technical results he derived (on Hecke algebras for finite groups of Lie type) are relevant to our calculations. We use them in this paper.

$\mathbf{Acknowledgments}$: This work is a part of author's thesis. He wishes
to thank his advisor Alan Roche from University of Oklahoma, USA for suggesting the problem studied in this work and guidance. Further, he wishes to thank C.G.Venketasubramanian and Arnab Mitra at IISER Tirupati, India for their interest and suggestions to improve the paper.\par

\section{Preliminaries}
\subsection{Bernstein Decomposition}

Let $G$ be the $F$-rational points of a reductive algebraic group defined over a non-Archimedean local field $F$. Let $(\pi, V)$ be an irreducible smooth representation of $G$. According to Theorem 3.3 in \cite{MR1486141}, there exists unique conjugacy class of cuspidal pairs $(L, \sigma)$ with the property that $\pi$ is isomorphic to a composition factor of $\iota_P^G\sigma$ for some parabolic subgroup $P$ of $G$. We call this conjugacy class of cuspidal pairs, the cuspidal support of $(\pi, V)$.\par

Given two cuspidal supports $(L_1, \sigma_1)$ and $(L_2, \sigma_2)$ of $(\pi, V)$, we say they are inertially equivalent if there exists $g \in G$ and $\chi \in \mathrm{X}_{nr}(L_2)$ such that $L_2= L_1^g$ and $\sigma_1^g\simeq \sigma_2\otimes\chi$. We write $[L, \sigma]_G$ for the inertial equivalence class or inertial support of $(\pi, V)$. Let $\mathfrak{B}(G)$ denote the set of inertial equivalence classes $[L, \sigma]_G$.\par

Let $\mathfrak{R}(G)$ denote the category of smooth representations of $G$. Let ${\mathfrak{R}}^s(G)$ be the full sub-category of smooth representations of $G$ with the property that $(\pi, V) \in ob({\mathfrak{R}}^s(G))\Longleftrightarrow$  every irreducible sub-quotient of $\pi$ has inertial support $s=[L, \sigma]_G$.\par

We can now state the Bernstein decomposition:

\[\mathfrak{R}(G)=\prod_{s \in \mathfrak{B}(G)}{\mathfrak{R}}^s(G).\]\par

\subsection{Types}

Let $G$ be the $F$-rational points of a reductive algebraic group defined over a non-Archimedean local field $F$. Let $K$ be a compact open subgroup of $G$. Let $(\rho, W)$ be an irreducible smooth representation of $K$ and $(\pi, V)$  be a  smooth representation of $G$. Let $V^{\rho}$ be the $\rho$-isotopic subspace of $V$.

\[
V^{\rho}= \sum\limits_{W'}W'
\] where the sum is over all $W'$ such that $(\pi|_K, W') \simeq (\rho, W)$.\par

Let $\mathcal{H}(G)$ be the space of all locally constant compactly supported functions $f \colon G \to \mathbb{C}$. This is a $\mathbb{C}$- algebra under convolution $*$. So for elements $f,g \in \mathcal{H}(G)$ we have 

\[(f * g)(x)= \int_G f(y)g(y^{-1}x)d\mu(y).\]

Here we have fixed a Haar measure $\mu$ on $G$. Let $(\pi, V)$ be a representation of $G$. Then  $\mathcal{H}(G)$ acts on $V$ via

\[hv=\int_G h(x)\pi(x)vd\mu(x)\] for $h \in \mathcal{H}(G), v \in V$. Let $e_\rho$ be the element in $\mathcal{H}(G)$ with support $K$ such that

\begin{center}
	$e_\rho(x)= \frac{\text{dim}\rho}{\mu(K)}tr_W(\rho(x^{-1})), x \in K.$\\
\end{center}

We have $e_\rho * e_\rho = e_\rho $ and $e_\rho V = V^{\rho}$ for any smooth representation $(\pi, V)$ of $G$. Let $\mathfrak{R}_\rho(G)$ be the full sub-category of $\mathfrak{R}(G)$ consisting of all representations $(\pi, V)$ where $V$ is generated by $\rho$-isotopic vectors. So $(\pi, V) \in \mathfrak{R}_\rho(G) \Longleftrightarrow  V = \mathcal{H}(G) * e_\rho V$. We now state the definition of a type.

\begin{definition}
	Let $ s \in \mathfrak{B}(G)$. We say that $(K, \rho)$ is an $s$-type in $G$ if $\mathfrak{R}_\rho(G)=
	\mathfrak{R}^s(G)$.
\end{definition}

\subsection{Hecke algebras}

Let $G$ be the $F$-rational points of a reductive algebraic group defined over a non-Archimedean local field $F$. Let $K$ be a compact open subgroup of $G$. Let $(\rho, W)$ be an irreducible smooth representation of $K$. Here we introduce the Hecke algebra $\mathcal{H}(G,\rho)$.

\[
\mathcal{H}(G,\rho)= \left\lbrace f \colon G \to End_{\mathbb{C}}(\rho^{\vee}) \; \middle|  \;
\begin{varwidth}{\linewidth}
supp($f$) is compact and \\
$f(k_1gk_2)= \rho^{\vee}(k_1)f(g)\rho^{\vee}(k_2)$\\
where $k_1,k_2 \in K, g \in G$
\end{varwidth}
\right \rbrace.
\]\par

Then $\mathcal{H}(G,\rho)$ is a $\mathbb{C}$-algebra with multiplication given by convolution $*$ with respect to some fixed Haar measure $\mu$ on $G$. So for elements $f,g \in \mathcal{H}(G)$ we have 
\[(f * g)(x)= \int_G f(y)g(y^{-1}x)d\mu(y).\] \par

The importance of types is seen from the following result. Let $\pi$ be a smooth representation in $\mathfrak{R}^{s}(G)$. Let $\mathcal{H}(G, \rho)- Mod$ denote the category of $\mathcal{H}(G, \rho)$-modules. If $(K, \rho)$ is an $s$-type then $ m_G \colon \mathfrak{R}^{s}(G) \longrightarrow \mathcal{H}(G, \rho)- Mod$  given by $m_G(\pi)= \mathrm{Hom}_K(\rho,\pi)$ is an equivalence of categories.\par

\subsection{Covers}

Let $G$ be the $F$-rational points of a reductive algebraic group defined over a non-Archimedean local field $F$. Let $K$ be a compact open subgroup of $G$. Let $(\rho, W)$ be an irreducible representation of $K$. Then we say $(K, \rho)$ is decomposed with respect to $(L,P)$ if the following hold:

\begin{enumerate}
	\item $K=(K\cap \overline U)( K \cap L)(K \cap U)$.
	\item $(K\cap \overline U),(K \cap U) \leqslant \text{ker}\rho$.
\end{enumerate}

Suppose $(K, \rho)$ is decomposed with respect  to $(L,P)$. We set $K_L=K \cap L$ and $\rho_L= \rho|_{K_L}$. We say an element $g \in G$ intertwines $\rho$ if $\mathrm{Hom}_{K^g \cap K} (\rho^g, \rho) \neq 0 $. Let $\mathfrak{I}_G(\rho)= \{ x \in G \mid x \,\text{intertwines} \, \rho \}$. We have the Hecke algebras $\mathcal{H}(G,\rho)$ and $\mathcal{H}(L,\rho_L)$. We write

\[
\mathcal{H}(G, \rho)_L =\{f \in \mathcal{H}(G, \rho) \mid \text{supp} (f) \subseteq KLK \}. 
\]\par

We recall some results and constructions from \cite{MR1643417}. These allow us to transfer questions about parabolic induction into questions concerning the module theory of appropriate Hecke algebras. 

\begin {proposition}[Bushnell and Kutzko, Proposition 6.3  \cite{MR1643417}]\label{pro_1}
Let $(K, \rho)$ decompose with respect to $(L,P)$ .Then

\begin{enumerate}
	\item $\rho_L$ is irreducible.
	\item $\mathfrak{I}_L(\rho_L)= \mathfrak{I}_G(\rho) \cap L$.
	\item There is an embedding  $T \colon \mathcal{H}(L, \rho_L) \longrightarrow \mathcal{H}(G, \rho)$ such that if $f \in \mathcal{H}(L, \rho_L)$ has support $K_LzK_L$ for some $ z\in L$, then $T(f)$ has support $KzK$.
	\item The map T induces an isomorphism of vector spaces:
	
	\begin{center}
		$\mathcal{H}(L, \rho_L) \xrightarrow{\simeq} \mathcal{H}(G, \rho)_L.$\\
	\end{center}
	
	\end {enumerate}
	
\end{proposition}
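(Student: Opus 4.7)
Write $K_+=K\cap U$ and $K_-=K\cap\overline U$ for brevity. The strategy is to leverage the Iwahori factorization $K=K_- K_L K_+$ together with the hypothesis that $\rho$ is trivial on $K_-$ and $K_+$; under these conditions $\rho$ is essentially determined by $\rho_L$, and the algebra $\mathcal{H}(G,\rho)$ inherits a substantial part of its structure directly from $\mathcal{H}(L,\rho_L)$.

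For part (1), I would argue that any $K_L$-stable subspace $W$ of the representation space of $\rho$ is automatically $K$-stable: since $K_\pm$ act trivially on the whole space and $K=K_- K_L K_+$, stability under $K_L$ propagates to stability under all of $K$. Irreducibility of $\rho$ then forces $W$ to be zero or everything, so $\rho_L$ is irreducible. For part (2), given $\ell\in L$ intertwining $\rho_L$, one verifies that $K^\ell\cap K$ itself admits an Iwahori-type factorization $(K^\ell\cap K\cap\overline U)(K_L^\ell\cap K_L)(K^\ell\cap K\cap U)$, so any intertwiner on the Levi piece extends trivially across the unipotent pieces to give an intertwiner of $\rho$. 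Conversely, an intertwiner of $\rho$ on $K^\ell\cap K$ restricts to an intertwiner of $\rho_L$ on $K_L^\ell\cap K_L$, the unipotent factors playing no role because $\rho$ is trivial on $K_\pm$.

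For part (3), given $f\in\mathcal{H}(L,\rho_L)$ supported on a single double coset $K_L z K_L$ with $z\in L$, define $T(f)$ on $KzK$ by
\[
T(f)(k_1 z k_2)=\rho(k_1)\,f(z)\,\rho(k_2),\qquad k_1,k_2\in K,
\]
and extend by $\mathbb{C}$-linearity. Well-definedness rests on two points: first, that $KzK\cap L=K_L z K_L$, a standard consequence of the Iwahori factorization together with the observation that $z$-conjugates of the unipotent factors remain inside $U$ or $\overline U$; second, that if $k_1 z k_2=k_1' z k_2'$ then $\rho((k_1')^{-1}k_1)f(z)=f(z)\rho(k_2'k_2^{-1})$, which reduces via Iwahori factorization to the corresponding $K_L$-bi-equivariance of $f$ and the triviality of $\rho$ on $K_\pm$. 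Injectivity is immediate since $T(f)(z)=f(z)$ recovers $f$.

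The main obstacle, and the substance of part (4), is establishing multiplicativity of $T$ and identifying its image with $\mathcal{H}(G,\rho)_L$. To prove $T(f_1)*T(f_2)=T(f_1*f_2)$, one evaluates the left-hand side at $z\in L$ and expands the convolution integral over $G$; using the Iwahori factorization one breaks the integration into pieces over $L$ and over the two unipotent radicals, and a careful change of variables, invoking the modular character of $P$, absorbs the unipotent directions into Haar-measure constants. The triviality of $\rho$ on $K_\pm$ then collapses all cross-terms and yields precisely $T(f_1*f_2)(z)$. Surjectivity onto $\mathcal{H}(G,\rho)_L$ is then clear, since every element of $\mathcal{H}(G,\rho)_L$ is a finite sum of functions supported on individual double cosets $KzK$ with $z\in L$, each of which visibly lies in the image of $T$. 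The delicate bookkeeping of Haar-measure normalizations on the unipotent radicals is what I anticipate as the main technical hurdle, as it requires carefully matching convolution products on both sides without introducing stray scalar factors.
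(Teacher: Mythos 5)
The paper does not prove this proposition; it is cited directly from Bushnell--Kutzko, so there is no in-paper argument to compare against. Your sketch of parts (1)--(3) is broadly sound and follows the standard line: irreducibility of $\rho_L$ from triviality of $\rho$ on the unipotent pieces, the Iwahori-type factorization of $K^\ell\cap K$ for $\ell\in L$ (which you invoke but do not justify --- it requires uniqueness of the $\overline U\,L\,U$-decomposition applied simultaneously to $K$ and to $K^\ell$), and the explicit formula for $T$, whose well-definedness and injectivity hinge on the identity $KzK\cap L=K_LzK_L$.

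The genuine error is in part (4). You treat ``establishing multiplicativity of $T$'' as the substance of the claim and propose to prove $T(f_1)*T(f_2)=T(f_1*f_2)$ by absorbing the unipotent integrations via the modular character. But $T$ is \emph{not} an algebra homomorphism, and the proposition does not assert that it is: part (4) says only that $T$ is an isomorphism of $\mathbb{C}$-vector spaces onto $\mathcal{H}(G,\rho)_L$. In fact $\mathcal{H}(G,\rho)_L$ is in general not closed under convolution, since $KLK\cdot KLK$ can escape $KLK$, so multiplicativity is not even a well-posed identity inside the target. The failure of $T$ to respect convolution is precisely what drives the subsequent machinery in the paper: the subalgebra $\mathcal{H}^+(L,\rho_L)$ of positively supported functions is introduced exactly because $T$ restricted to it (giving $T^+$) \emph{is} multiplicative, and Proposition~\ref{pro_3} then asks when $T^+$ extends to a genuine algebra embedding $t$. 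The Haar-measure bookkeeping you anticipate as the main hurdle would, if carried out, uncover a nontrivial modular-character obstruction rather than a clean identity. Your surjectivity argument, by contrast, is correct and does complete the actual content of part (4).
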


\begin{definition}
	
	An element $z \in L$ is called $(K,P)$-positive element if:
	
	\begin{enumerate}
		\item $z(K\cap \overline{U})z^{-1} \subseteq K \cap \overline{U}.$
		\item $z^{-1}(K \cap U)z \subseteq K \cap U.$
		\end {enumerate}
	\end{definition}
	
	\begin{definition}
		An element $z \in L$ is called strongly $(K,P)$-positive element if:
		
		\begin{enumerate}
			\item $z$ is $(K,P)$ positive.
			\item $z$ lies in center of $L$.
			\item For any compact open subgroups $K$ and $K'$ of $U$ there exists $m\geqslant 0$ and $m \in \mathbb{Z}$ such that $z^mKz^{-m} \subseteq K'$.
			\item For any compact open subgroups $K$ and $K'$ of $U$ there exists $m\geqslant 0$ and $m \in \mathbb{Z}$ such that $z^{-m}Kz \subseteq K'$. 
		\end{enumerate}

	\end{definition}\par

	\begin{proposition}[Bushnell and Kutzko, Lemma 6.14 \cite{MR1643417}, Proposition 7.1, \cite{MR1643417}]\label{pro_2}
		
		Strongly $(K,P)$-positive elements exist and given a strongly positive element $z \in L$ , there exists a unique function $\phi_z \in \mathcal{H}(L, \rho_L)$ with support $K_LzK_L$ such that $\phi_z(z)$ is identity function in $End_{\mathbb{C}}(\rho_L^{\vee})$.
	\end{proposition}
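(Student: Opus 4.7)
My plan is to split the proof into two parts: first exhibit a strongly $(K,P)$-positive element, then construct the function $\phi_z$.

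For the existence claim, in the concrete setting of the paper with $K = \mathfrak{P}$ and $(L,P)$ the Siegel data, I would take as an explicit candidate the element $z = \zeta^{-1}$, where $\zeta = \begin{pmatrix} \varpi_E 1_n & 0 \\ 0 & \overline{\varpi}_E^{-1}1_n \end{pmatrix}$, and verify the four defining conditions by direct matrix computation using the Iwahori factorization of $\mathfrak{P}$. The point is that conjugation by $z^{\pm 1}$ acts on the off-diagonal block of an element of $\mathfrak{P}\cap U$ or $\mathfrak{P}\cap\overline{U}$ by the scalar $(\varpi_E\overline{\varpi}_E)^{\pm 1}$, and $\mathrm{val}_E(\varpi_E\overline{\varpi}_E)\geqslant 2$ in both the ramified and unramified cases. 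This immediately yields
\[
z(\mathfrak{P}\cap\overline{U})z^{-1} \subseteq \mathfrak{P}\cap\overline{U}, \qquad z^{-1}(\mathfrak{P}\cap U)z \subseteq \mathfrak{P}\cap U,
\]
and $z\in Z(L)$ is obvious from the description of $Z(L)$. The contraction conditions hold because iterating the conjugation scales the relevant lattices by $(\varpi_E\overline{\varpi}_E)^{\pm m}$, whose $E$-adic valuation grows linearly in $m$, so for $m$ large any prescribed compact open can be absorbed.

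For the existence of $\phi_z$, fix a strongly $(K,P)$-positive $z$; the feature we need is centrality, $z\in Z(L)$. I would define
\[
\phi_z(k_1 z k_2) = \rho_L^\vee(k_1)\rho_L^\vee(k_2), \qquad k_1,k_2 \in K_L,
\]
and extend by zero outside $K_L z K_L$. Well-definedness is the essential check: if $k_1 z k_2 = k_1' z k_2'$, then $(k_1')^{-1} k_1 = z k_2' k_2^{-1} z^{-1}$, and since $z$ is central in $L$ this collapses to $(k_1')^{-1} k_1 = k_2' k_2^{-1}$, whence applying $\rho_L^\vee$ yields equality of the two putative values. Smoothness of $\rho_L$ makes $\phi_z$ locally constant, and $K_L z K_L$ is a finite union of left $K_L$-cosets, so $\phi_z$ is compactly supported; thus $\phi_z\in\mathcal{H}(L,\rho_L)$ has support $K_L z K_L$ and satisfies $\phi_z(z) = \rho_L^\vee(1)\rho_L^\vee(1) = \mathrm{id}_{\rho_L^\vee}$.

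Uniqueness is formal: any $\phi\in\mathcal{H}(L,\rho_L)$ with $\mathrm{supp}(\phi)\subseteq K_L z K_L$ is determined by its value at any chosen representative because of the bi-equivariance $\phi(k_1 g k_2) = \rho_L^\vee(k_1)\phi(g)\rho_L^\vee(k_2)$, so the prescription $\phi_z(z) = \mathrm{id}_{\rho_L^\vee}$ pins the function down. The main obstacle, insofar as there is one, is the verification of the positivity and contraction conditions for $\zeta^{-1}$: this is not conceptually deep but demands careful bookkeeping of $\varpi_E$ and $\overline{\varpi}_E$ valuations, especially since the two cases treat $\overline{\varpi}_E$ differently ($\overline{\varpi}_E = \varpi_E$ in the unramified case versus $\overline{\varpi}_E = -\varpi_E$ in the ramified case). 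Once strong positivity is secured, the construction and uniqueness of $\phi_z$ rest entirely on the centrality of $z$ and are essentially formal.
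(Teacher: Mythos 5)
The paper does not actually prove this proposition---it is quoted from Bushnell--Kutzko (Lemma 6.14 and Proposition 7.1 of \cite{MR1643417})---so there is no internal proof to compare against; what you have written is a concrete verification that the cited result applies in the paper's setting, which is a legitimate move. Your construction of $\phi_z$ and the well-definedness and uniqueness arguments are exactly right: centrality of $z$ reduces $k_1 z k_2 = k_1' z k_2'$ to $(k_1')^{-1}k_1 = k_2' k_2^{-1}$, and $\rho_L^\vee$-biequivariance pins the function down. One point deserves flagging. The Definition of $(K,P)$-positivity in this paper has the roles of $U$ and $\overline{U}$ reversed relative to Bushnell--Kutzko's convention and, more to the point, relative to the paper's own set $\mathcal{I}^{+}$ introduced in Section \ref{sec_9} (where $\mathcal{I}^{+}=\{x \in L \mid x\mathfrak{P}_{+}x^{-1}\subseteq\mathfrak{P}_{+},\ x^{-1}\mathfrak{P}_{-}x \subseteq \mathfrak{P}_{-} \}$ and $\zeta$ itself is declared positive). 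Following the stated Definition literally, as you did, forces $z=\zeta^{-1}$; following $\mathcal{I}^+$ and Bushnell--Kutzko gives $z=\zeta$, which is the element the paper actually generates $\mathcal{H}^+(L,\rho_0)$ by. Your choice is internally consistent with the Definition you were handed, so this is a paper-level inconsistency rather than an error on your part, but the reader should be warned that the convention flips when reaching Section \ref{sec_9}. The other minor point is that your existence argument is specialized to $G=\mathrm{U}(n,n)$ with the Siegel data, whereas the cited result is general; for the purposes of this paper the specialization is harmless.
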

	
	\[
	\mathcal{H}^{+}(L,\rho_L)= \left\lbrace f \colon L \to End_{\mathbb{C}}(\rho_L^{\vee}) \; \middle|  \;
	\begin{varwidth}{\linewidth}
	supp($f$) is compact and consists\\ 
	of strongly $(K,P)$-positive elements \\ 
	and $f(k_1lk_2)= \rho_L^{\vee}(k_1)f(l)\rho_L^{\vee}(k_2)$\\
	where $k_1,k_2 \in K_L, l \in L$
	\end{varwidth}
	\right \rbrace.
	\]
	
	The isomorphism of vector spaces $T \colon \mathcal{H}(L,\rho_L) \longrightarrow \mathcal{H}(G,\rho)_L$ restricts to an embedding of algebras:

	\begin{center}
		$T^+ \colon \mathcal{H}^{+}(L,\rho_L) \longrightarrow \mathcal{H}(G,\rho)_L \hookrightarrow \mathcal{H}(G,\rho)$.\\
	\end{center}\par

	\begin{proposition}[Bushnell and Kutzko, Theorem 7.2.i \cite{MR1643417}]\label{pro_3}
		The embedding $T^+$ extends to an embedding of algebras \\ $t \colon \mathcal{H}(L,\rho_L)\longrightarrow \mathcal{H}(G,\rho) \Longleftrightarrow T^+(\phi_z)$ is invertible for some strongly $(K,P)$-positive element $z$, where  $\phi_z \in \mathcal{H}(L,\rho_L)$ has support $K_LzK_L$ with $\phi_z(z)=1$.
	\end{proposition}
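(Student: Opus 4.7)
\medskip

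\noindent\textbf{Proof proposal for Proposition \ref{pro_3}.}

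\smallskip

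The plan is to treat $\mathcal{H}(L,\rho_L)$ as a localization of $\mathcal{H}^{+}(L,\rho_L)$ obtained by inverting $\phi_z$. The forward implication is routine. If an algebra embedding $t$ extends $T^+$, then $t(\phi_z)=T^+(\phi_z)$. Because $z$ lies in the center of $L$, the function $\phi_z$ admits a two-sided convolution inverse $\phi_z^{-1}$ in $\mathcal{H}(L,\rho_L)$ (supported on $K_L z^{-1}K_L$, normalized via the value $\phi_z(z)=1_{End_{\mathbb{C}}(\rho_L^\vee)}$). Hence $T^+(\phi_z)=t(\phi_z)$ is invertible in $\mathcal{H}(G,\rho)$ with inverse $t(\phi_z^{-1})$.

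For the converse, assume $T^+(\phi_z)$ is invertible. I would first prove the key absorption lemma: for every $\phi\in\mathcal{H}(L,\rho_L)$ there exists an integer $N\geqslant 0$ such that $\phi_z^{*N} * \phi \in \mathcal{H}^{+}(L,\rho_L)$. This follows from compactness of $\mathrm{supp}(\phi)$ together with axioms (3)--(4) in the definition of strongly $(K,P)$-positivity, which ensure that repeated multiplication by $z$ drags any compact support inside $L$ into the positive cone. With this in hand, define
\[
t(\phi) \;=\; T^+(\phi_z)^{-N}\,*\, T^+(\phi_z^{*N} * \phi),
\]
for any $N$ that works.

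The next step is to verify that this formula is well defined, is multiplicative, and extends $T^+$. Independence of $N$ reduces to the identity $T^+(\phi_z^{*(N+1)}*\phi)=T^+(\phi_z)*T^+(\phi_z^{*N}*\phi)$, which holds because $T^+$ is an algebra homomorphism on $\mathcal{H}^+(L,\rho_L)$ and both sides lie in its domain for large enough $N$. Multiplicativity uses the centrality of $z$ in $L$: for $\phi,\psi\in\mathcal{H}(L,\rho_L)$ choose $N,M$ absorbing each factor; then $\phi_z^{*N}*\phi$ and $\phi_z^{*M}*\psi$ lie in $\mathcal{H}^+(L,\rho_L)$, and since $\phi_z$ is central in $\mathcal{H}(L,\rho_L)$ we obtain
\[
\phi_z^{*(N+M)} * (\phi*\psi) \;=\; (\phi_z^{*N}*\phi)*(\phi_z^{*M}*\psi),
\]
whence $t(\phi*\psi)=t(\phi)*t(\psi)$ after applying $T^+$ and canceling $T^+(\phi_z)^{-(N+M)}$. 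Restricting to $\phi\in\mathcal{H}^+(L,\rho_L)$ we may take $N=0$ and recover $T^+$, so $t$ is an extension. Injectivity follows from the injectivity of $T^+$ and the invertibility of $T^+(\phi_z)$: if $t(\phi)=0$ then $T^+(\phi_z^{*N}*\phi)=0$, hence $\phi_z^{*N}*\phi=0$, and since $\phi_z$ is a unit in $\mathcal{H}(L,\rho_L)$ we get $\phi=0$.

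The main obstacle is the absorption lemma and confirming that centrality of $\phi_z$ in the full algebra $\mathcal{H}(L,\rho_L)$ genuinely survives passage through $T^+$; once $T^+(\phi_z)$ is known to commute with the image of $T^+$ (which follows from $z\in Z(L)$ and a direct support computation of $\phi_z*f$ versus $f*\phi_z$ for $f\in\mathcal{H}^+(L,\rho_L)$), the remainder of the argument is bookkeeping. The construction also shows uniqueness of the extension $t$, since its value on any $\phi$ is forced by the identity $t(\phi_z)^N*t(\phi)=T^+(\phi_z^{*N}*\phi)$.
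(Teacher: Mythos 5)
The paper does not prove this statement at all: it is quoted verbatim as Theorem 7.2.i of Bushnell--Kutzko \cite{MR1643417} and used as a black box, so there is no internal proof to compare against. Your localization argument is, in substance, the standard proof from that reference, and its overall structure is sound: the forward direction via invertibility of $\phi_z$ in $\mathcal{H}(L,\rho_L)$ (which the paper itself later verifies by the same convolution computation, in Section \ref{sec_7}), and the converse via the formula $t(\phi)=T^+(\phi_z)^{-N}*T^+(\phi_z^{*N}*\phi)$, with well-definedness, multiplicativity and injectivity handled exactly as one should. Two points deserve care. First, your ``absorption lemma'' is the one genuinely nontrivial ingredient and is only sketched; note also that it is incompatible with the paper's displayed definition of $\mathcal{H}^{+}(L,\rho_L)$, which (surely by oversight) requires supports to consist of \emph{strongly} positive elements -- these lie in $Z(L)$, so no power of $\phi_z$ could drag a general double coset $K_LyK_L$ into that set. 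The correct (Bushnell--Kutzko) definition uses merely $(K,P)$-positive elements, and with that definition the absorption statement is their Lemma on positive elements and your appeal to axioms (3)--(4) is the right mechanism. Second, in the multiplicativity step you do need $T^+(\phi_z)^{-M}$ to commute with $T^+(\phi_z^{*N}*\phi)$; you flag this and it does follow from centrality of $\phi_z$ in $\mathcal{H}(L,\rho_L)$ (which uses $\phi_z(z)=1$, a scalar) pushed through the multiplicative map $T^+$. With the definitional correction and a written-out absorption lemma, the argument is complete.
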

	
	\begin{definition}
		Let $L$ be a proper Levi subgroup of $G$. Let $K_L$ be a compact open subgroup of $L$ and $\rho_L$ be an irreducible smooth representation of $K_L$. Let $K$ be a compact open subgroup of $G$ and $\rho$ be an irreducible, smooth representation of $K$. Then we say $(K, \rho)$ is a $G$-cover of $(K_L, \rho_L)$
		if
		
		\begin {enumerate}
		\item The pair $(K, \rho)$ is decomposed with respect to $(L,P)$ for every parabolic subgroup $P$ of $G$ with Levi component $L$.
		\item $K \cap L= K_L$  and $\rho|_L \simeq \rho_L$.
		\item The embedding $T^+ \colon \mathcal{H}^{+}(L,\rho_L) \longrightarrow \mathcal{H}(G,\rho)$ extends to an embedding of algebras $ t \colon \mathcal{H}(L,\rho_L) \longrightarrow \mathcal{H}(G,\rho)$.
	\end{enumerate}
\end{definition}\par

\begin{proposition}[Bushnell and Kutzko, Theorem 8.3 \cite{MR1643417}]\label{Types_Covers}
	Let $s_L=[L,\pi]_L \in \mathfrak{B}(L)$ and $s= [L,\pi]_G \in \mathfrak{B}(G)$ . Say $(K_L,\rho_L)$ is an $s_L$-type and $(K,\rho)$ is a $G$-cover of $(K_L,s_L)$. Then $(K,\rho)$ is an $s$-type.
\end{proposition}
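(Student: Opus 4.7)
The goal is to prove $\mathfrak{R}_\rho(G) = \mathfrak{R}^s(G)$. My strategy is to reduce both inclusions to a Jacquet-module compatibility for the $\rho$-isotypic functor and then invoke the $s_L$-type hypothesis.

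The heart of the argument is the following compatibility. For every $(\pi, V) \in \mathfrak{R}(G)$, the canonical surjection $V \to r_P^G(V)$ induces a natural $\mathcal{H}(L, \rho_L)$-linear isomorphism
\[
\mathrm{Hom}_K(\rho, V) \xrightarrow{\sim} \mathrm{Hom}_{K_L}(\rho_L, r_P^G V),
\]
where the $\mathcal{H}(L, \rho_L)$-action on the left is pulled back from $\mathcal{H}(G, \rho)$ via the embedding $t$ of Proposition \ref{pro_3}. I would prove this using the Iwahori factorization $K = K_{-} K_L K_{+}$, triviality of $\rho$ on $K_{\pm}$, and a ``shrinking $U$-support'' argument with strongly $(K, P)$-positive elements (Proposition \ref{pro_2}). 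The full $G$-cover hypothesis, not just decomposability, is used here to guarantee that $t$ extends the partial embedding $T^{+}$ to all of $\mathcal{H}(L, \rho_L)$ as an algebra map, so that the above isomorphism is truly $\mathcal{H}(L, \rho_L)$-equivariant.

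Given the compatibility, one inclusion follows quickly: if $\pi$ is irreducible with $\pi^\rho \neq 0$, then $r_P^G(\pi)^{\rho_L} \neq 0$, so some irreducible subquotient $\tau$ of $r_P^G(\pi)$ contains $\rho_L$ and hence lies in $\mathfrak{R}^{s_L}(L)$ by the $s_L$-type property. Tracking cuspidal supports through $r_P^G$ then places $\pi$ in $\mathfrak{R}^s(G)$. For the converse, take irreducible $\pi \in \mathfrak{R}^s(G)$. Its cuspidal support is some $(L, \sigma)$ with $[L, \sigma]_G = s$, and by the second adjointness $\sigma$ appears as a quotient of $r_P^G(\pi)$. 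Since $[L, \sigma]_L$ lies in the $W(G, L)$-orbit of $s_L$, and decomposability is assumed with respect to every parabolic with Levi $L$, I may reduce to the case $[L, \sigma]_L = s_L$. Then $\sigma^{\rho_L} \neq 0$ by the $s_L$-type hypothesis, so $r_P^G(\pi)^{\rho_L} \neq 0$, and the compatibility delivers $\pi^\rho \neq 0$.

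The main obstacle is twofold. First, establishing the compatibility lemma surjectively: mere decomposability of $(K, \rho)$ with respect to $(L, P)$ yields only a natural injection on $\rho$-isotypic parts, and the shrinking-support manoeuvre that produces the inverse requires $T^{+}(\phi_z)$ to be invertible in $\mathcal{H}(G, \rho)$ for some strongly positive $z$ -- precisely the $G$-cover condition of Proposition \ref{pro_3}. Second, matching inertial classes in the converse direction: because $[L, \sigma]_L$ may a priori lie in a $W(G, L)$-twist of $s_L$, one must either invoke the cover property for a Weyl-conjugate parabolic $P^w$ or observe that all such twists give the same $G$-inertial class $s$ and are captured by the same Hecke algebra $\mathcal{H}(G, \rho)$ via its extra generators; I would handle this by exploiting the decomposition of $(K, \rho)$ with respect to every parabolic with Levi $L$, which is built into the definition of a $G$-cover.
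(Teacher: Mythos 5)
The paper does not actually prove this proposition: it is imported wholesale from Bushnell--Kutzko (Theorem 8.3 of \cite{MR1643417}), so the only meaningful comparison is with their argument, and your sketch follows essentially that route. Your central compatibility lemma --- that for every smooth $(\pi,V)$ the canonical map $V \to r_P^G V$ induces an $\mathcal{H}(L,\rho_L)$-linear isomorphism $\mathrm{Hom}_K(\rho,V) \simeq \mathrm{Hom}_{K_L}(\rho_L, r_P^G V)$, the module structure on the left being transported through the embedding of Proposition \ref{pro_3} --- is precisely the Jacquet-module comparison that Bushnell and Kutzko establish for covers in \S 7 of \cite{MR1643417}, and deducing the type property from it, the $s_L$-type hypothesis, and cuspidal-support bookkeeping is exactly how their Theorem 8.3 goes. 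So the overall plan is sound and is not a genuinely different proof.

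Two points need tightening. First, in the converse direction you cannot literally ``reduce to the case $[L,\sigma]_L=s_L$'': if the cuspidal support of $\pi$ is $(L, w\cdot\sigma\chi)$ with $w \in N_G(L)$ moving the $L$-inertial class, the correct move --- which you do name as your first alternative --- is to change the parabolic rather than the inducing representation: conjugating the support back to $(L,\sigma\chi)$ replaces $P$ by another parabolic $Q$ with Levi $L$, one gets $\pi \hookrightarrow \iota_Q^G(\sigma\chi)$, hence $\sigma\chi$ as a quotient of $r_Q^G\pi$, and the cover hypothesis (decomposition and invertibility with respect to \emph{every} parabolic with Levi $L$) lets you run the compatibility lemma for $Q$ instead of $P$. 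Your vaguer second alternative about ``extra generators'' of $\mathcal{H}(G,\rho)$ is not a proof and should be dropped. (Also, plain Frobenius reciprocity, not second adjointness, is what produces $\sigma\chi$ as a quotient of $r_Q^G\pi$ from the embedding into $\iota_Q^G(\sigma\chi)$.) Second, you argue only at the level of irreducible representations, i.e.\ you prove $\pi^\rho \neq 0 \Longleftrightarrow \pi \in \mathfrak{R}^s(G)$ for irreducible $\pi$, whereas the definition of type in force here is the equality of categories $\mathfrak{R}_\rho(G)=\mathfrak{R}^s(G)$; the reduction of the latter to the former is standard (exactness of $V \mapsto e_\rho V$ plus the Bernstein decomposition, and it is proved in \cite{MR1643417}), but it must be invoked explicitly. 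With these repairs your outline is a faithful reconstruction of the cited proof.
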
\par

Note that in this paper $\rho_L= \rho_0, K= \mathfrak{P}, K_L= \mathfrak{P}_0$. Recall the categories $\mathfrak{R}^{s_L}(L), \mathfrak{R}^{s}(G)$ where $s_L=[L,\pi]_L$ and $s= [L,\pi]_G$. Also recall $\mathcal{H}(G,\rho)-Mod$ is the category of $\mathcal{H}(G,\rho)$-modules. Let $\mathcal{H}(L,\rho_L)-Mod$ be the category of $\mathcal{H}(L,\rho_L)$-modules. The functors $\iota_P^G, m_G$ were defined earlier. Let $\pi \in \mathfrak{R}^{s_L}(L)$. Then the functor $m_L \colon \mathfrak{R}^{s_L}(L) \longrightarrow  \mathcal{H}(L,\rho_L)-Mod$ is given by $m_L(\pi)= \mathrm{Hom}_{K_L}(\rho_L, \pi)$. The functor $(T_P)_* \colon  \mathcal{H}(L,\rho_L)-Mod \longrightarrow \mathcal{H}(G,\rho)-Mod$ is defined later in this paper.\par

The importance of covers is seen from the following commutative diagram  which we will use in answering the question which we posed earlier in this paper.

\[
\begin{CD}
\mathfrak{R}^{s}(G)    @>m_G>>    \mathcal{H}(G,\rho)-Mod\\
@A\iota_P^GAA                                    @A(T_P)_*AA\\
\mathfrak{R}^{s_L}(L)    @>m_L>>     \mathcal{H}(L,\rho_L)-Mod
\end{CD}
\]\par

\subsection{Depth zero supercuspidal representations}

Suppose $\tau$ is an irreducible cuspidal representation of $\mathrm{GL}_n(k_E)$ inflated to a representation of $\mathrm{GL}_n(\mathfrak{O}_E)= K_0$. Then let $\widetilde{K_0}= ZK_0$ where $Z=Z(\mathrm{GL}_n(E))=\{\lambda \,1_n\mid \lambda \in E^{\times}\}$. As any element of $E^{\times}$ can be written as $u\varpi_E^n$ for some $u \in \mathfrak{O}_E^{\times}$ and $m \in \mathbb{Z}$. So in fact, $\widetilde{K_0}= <\varpi_E 1_n>K_0$. \par

Let $(\pi, V)$ be a representation of $\mathrm{GL}_n(E)$ and $1_V$ be the identity linear transformation of $V$. As $\varpi_E 1_n \in Z$, so $\pi(\varpi_E 1_n)= \omega_{\pi}(\varpi_E 1_n) 1_V$ where $\omega_{\pi}\colon Z \longrightarrow \mathbb{C}^{\times}$ is the central character of $\pi$.\par

Let $\widetilde\tau$ be a representation of $\widetilde K_0$ such that:
\begin{enumerate} 
	\item $\widetilde\tau(\varpi_E 1_n)=\omega_{\pi}(\varpi_E 1_n) 1_V,$
	\item $\widetilde\tau|_{K_0}= \tau.$
\end{enumerate}
\par

Say $\omega_{\pi}(\varpi_E 1_n)= z$ where $z \in \mathbb{C^{\times}}$. Now call $\widetilde\tau = \widetilde\tau_z$. We have extended $\tau$ to $\widetilde\tau_z$ which is a representation of $\widetilde K_0$, so that $Z$ acts by $\omega_\pi$. Hence $\pi|_{\widetilde K_0} \supseteq\widetilde \tau_z$ which implies that $\mathrm{Hom}_{\widetilde K_0} (\widetilde \tau_z, \pi|_{\widetilde K_0}) \neq 0$.\par

By Frobenius reciprocity for induction from open subgroups, 

\begin{center}
	$\mathrm{Hom}_{\widetilde K_0} (\widetilde \tau_z, \pi|_{\widetilde K_0}) \simeq \mathrm{Hom}_{\mathrm{GL}_n(E)}(c$-$Ind_{\widetilde K_0}^{\mathrm{GL}_n(E)} \widetilde\tau_z, \pi)$.
\end{center}

Thus $\mathrm{Hom}_{\mathrm{GL}_n(E)}(c$-$Ind_{\widetilde K_0}^{\mathrm{GL}_n(E)} \widetilde\tau_z, \pi) \neq 0$. So there exists a non-zero $\mathrm{GL}_n(E)$-map from $c$-$Ind_{\widetilde K_0}^G \widetilde\tau_z$ to $\pi$. As $\tau$ is cuspidal representation, using Cartan decompostion and Mackey's criteria we can show that $c$-$Ind_{\widetilde K_0}^{\mathrm{GL}_n(E)} \widetilde\tau_z$ is irreducible. So $\pi \simeq c$-$Ind_{\widetilde K_0}^{\mathrm{GL}_n(E)} \widetilde\tau_z$. As $c$-$Ind_{\widetilde K_0}^{\mathrm{GL}_n(E)} \widetilde\tau_z$ is irreducible supercuspidal representation of $\mathrm{GL}_n(E)$ of depth zero, so $\pi$ is irreducible supercuspidal representation of $\mathrm{GL}_n(E)$ of depth zero.\par

Conversely, let $\pi$ is a irreducible, supercuspidal, depth zero representation of  $\mathrm{GL}_n(E)$. So $\pi^{K_1} \neq \{0\}$. Hence $\pi|_{K_1} \supseteq 1_{K_1}$, where $1_{K_1}$ is trivial representation of $K_1$. This means $\pi|_{K_0} \supseteq \tau$, where $\tau$ is an irreducible representation of $K_0$ such that $\tau|_{K_1} \supseteq 1_{K_1}$. So $\tau$ is trivial on $K_1$. So $\pi|_{K_0}$ contains an irreducible representation $\tau$ of $K_0$ such that $\tau|_{K_1}$ is trivial. So $\tau$ can be viewed as an irreducible representation of $K_0/K_1\cong \mathrm{GL}_n(k_E)$  inflated to $K_0=\mathrm{GL}_n(\mathfrak{O}_E)$. The representation $\tau$ is cuspidal by (a very special case of) A.1 Appendix \cite{MR1235019}.\par

So we have the following bijection of sets:

\[
\left\lbrace 
\begin{varwidth}{\linewidth}
Isomorphism classes of irreducible \\ cuspidal 
representations of $\mathrm{GL}_n(k_E)$
\end{varwidth}
\right \rbrace		
\times \mathbb{C^{\times}} \longleftrightarrow 
\left\lbrace 
\begin{varwidth}{\linewidth}
Isomorphism classes \\of irreducible  \\
supercuspidal \\representations of \\ 
$\mathrm{GL}_n(E)$ of depth zero

\end{varwidth}
\right \rbrace.
\]
\par

\[(\tau, z)  \xrightarrow{\hspace*{6cm}}  c-Ind_{\widetilde K_0}^{\mathrm{GL}_n(E)} \widetilde\tau_z \]

\[(\tau, \omega_\pi(\varpi_E 1_n)) \xleftarrow{\hspace*{6cm}}  \pi \]\par

From now on we denote the representation $\tau$ by $\rho_0$. So $\rho_0$ is an irreducible cuspidal representation of $\mathrm{GL}_n(k_E)$ inflated to $K_0=\mathrm{GL}_n(\mathfrak{O}_E)$. 

\section{Representation $\rho$ of $\mathfrak{P}$} \label{sec_3}

 Let $V$ be the vector space associated with $\rho_0$. Now $\rho_0$ is extended to a map $\rho$ from $\mathfrak{P}$ to $GL(V)$ as follows. By Iwahori factorization, if $j \in \mathfrak{P}$ then $j$ can be written as $j_{-}j_{0}j_{+}$, where $j_{-}\in \mathfrak{P}_{-}, j_{+}\in \mathfrak{P}_{+}, j_{0}\in \mathfrak{P}_{0}$. Now the map $\rho$ on $\mathfrak{P}$ is defined as $\rho(j)= \rho_0(j_0)$.\par 
					
\begin{proposition}\label{pro_11}
$\rho$ is a homomorphism from $\mathfrak{P}$ to $GL(V)$. So $\rho$  becomes a representation of $\mathfrak{P}$.
\end{proposition}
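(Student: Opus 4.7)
The plan is to verify the homomorphism property directly using the Iwahori factorization $\mathfrak{P} = \mathfrak{P}_-\mathfrak{P}_0\mathfrak{P}_+$. The factorization is unique, so $\rho$ is at least well-defined as a map of sets. Given $j = j_-j_0j_+$ and $j' = j'_-j'_0j'_+$ in $\mathfrak{P}$, the strategy is to rearrange $jj'$ into the form $\mathfrak{P}_-\mathfrak{P}_0\mathfrak{P}_+$ and then read off $\rho_0((jj')_0)$.

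The key intermediate step is the following claim: for $u_+ = \begin{pmatrix} 1 & X \\ 0 & 1 \end{pmatrix} \in \mathfrak{P}_+$ and $v_- = \begin{pmatrix} 1 & 0 \\ Y & 1 \end{pmatrix} \in \mathfrak{P}_-$, the product $u_+ v_-$ lies in $\mathfrak{P}$ and its Iwahori factorization $u_+ v_- = w_- w_0 w_+$ satisfies $w_0 \in \mathfrak{P}_{0,1}$. This is verified by the block identity
\[
\begin{pmatrix} 1+XY & X \\ Y & 1 \end{pmatrix}
= \begin{pmatrix} 1 & 0 \\ Y(1+XY)^{-1} & 1 \end{pmatrix}
\begin{pmatrix} 1+XY & 0 \\ 0 & 1 - Y(1+XY)^{-1}X \end{pmatrix}
\begin{pmatrix} 1 & (1+XY)^{-1}X \\ 0 & 1 \end{pmatrix},
\]
noting that $Y \in \varpi_E \mathrm{M}_n(\mathfrak{O}_E)$ forces $XY \in \varpi_E \mathrm{M}_n(\mathfrak{O}_E)$, so both $1+XY$ and $1 - Y(1+XY)^{-1}X$ lie in $K_1 = 1 + \varpi_E \mathrm{M}_n(\mathfrak{O}_E)$. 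That the three factors actually lie in $\mathfrak{P}_-$, $\mathfrak{P}_{0,1}$, $\mathfrak{P}_+$ respectively (as opposed to being merely block-triangular in $\mathrm{GL}_{2n}(E)$) follows from the uniqueness of Iwahori factorization applied to $u_+v_- \in \mathfrak{P}$.

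With the claim in hand, I would write
\[
jj' = j_- j_0 (u_+ v_-) j'_0 j'_+ = j_- j_0 w_- w_0 w_+ j'_0 j'_+
= j_-\bigl(j_0 w_- j_0^{-1}\bigr)\bigl(j_0 w_0 j'_0\bigr)\bigl(j_0'^{-1} w_+ j'_0\bigr) j'_+,
\]
where $u_+v_- = w_-w_0w_+$ is the factorization produced by the claim. The three groups $\mathfrak{P}_-$, $\mathfrak{P}_0$, $\mathfrak{P}_+$ are each normalized by $\mathfrak{P}_0$ (since conjugation by a block-diagonal Levi element preserves lower-unipotent and upper-unipotent blocks), so the three parenthesized factors lie in $\mathfrak{P}_-$, $\mathfrak{P}_0$, $\mathfrak{P}_+$ respectively. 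By uniqueness of Iwahori factorization, $(jj')_0 = j_0 w_0 j'_0$. Since $\rho_0$ is inflated from $\mathrm{GL}_n(k_E)$ it is trivial on $\mathfrak{P}_{0,1} \cong K_1$, hence $\rho_0(w_0) = 1$, giving
\[
\rho(jj') = \rho_0(j_0 w_0 j'_0) = \rho_0(j_0)\rho_0(w_0)\rho_0(j'_0) = \rho_0(j_0)\rho_0(j'_0) = \rho(j)\rho(j').
\]

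The main obstacle is really just the bookkeeping for the claim about $u_+v_-$: one must be careful that the lower-unipotent and upper-unipotent factors produced by the block LDU decomposition satisfy the Hermitian condition $X + {}^t\overline{X} = 0$ required for membership in $\mathfrak{P}_\pm$. The cleanest way to handle this is to argue, as above, by uniqueness: $u_+v_- \in \mathfrak{P}$ admits some Iwahori factorization in $\mathfrak{P}_-\mathfrak{P}_0\mathfrak{P}_+$, and this factorization must agree with the block LDU decomposition inside $\mathrm{GL}_{2n}(E)$, so the factors automatically satisfy the unitarity constraints. Everything else is a routine normalization check.
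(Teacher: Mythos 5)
Your argument is correct, but it proceeds along a genuinely different route from the paper's. You verify the homomorphism property by hand: the block LDU identity shows $\mathfrak{P}_{+}\mathfrak{P}_{-}\subseteq \mathfrak{P}_{-}\mathfrak{P}_{0,1}\mathfrak{P}_{+}$ with the middle factor in $\mathfrak{P}_{0,1}$ (your appeal to uniqueness of the block-triangular decomposition to get the unitarity constraints on the factors is the right way to avoid checking the Hermitian conditions directly), and then the normalization of $\mathfrak{P}_{\pm}$ by $\mathfrak{P}_{0}$ together with the triviality of $\rho_0$ on $K_1$ closes the computation. The paper instead avoids all rearrangement: it observes that reduction mod $\mathbf{p}_E$ gives a surjection $\phi\colon\mathfrak{P}\to P(k_E)$ whose composite with the quotient $P(k_E)\to P(k_E)/U(k_E)\cong \mathrm{GL}_n(k_E)$ has kernel exactly $\mathfrak{P}_1=\mathfrak{P}_{-}\mathfrak{P}_{0,1}\mathfrak{P}_{+}$ and sends $j=j_{-}j_0j_{+}$ to $j_0 \bmod \mathbf{p}_E$; thus $\rho$ is simply $\rho_0$ inflated through a composition of group homomorphisms, and the homomorphism property is automatic. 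Your computation is more elementary and makes explicit the commutation fact that the paper's kernel identification silently relies on; the paper's version is shorter and has the added benefit of exhibiting $\rho$ as inflated from a representation $\overline{\rho}$ of the finite parabolic $P(k_E)$, a fact that is reused later in the Hecke-algebra computations of Section 6.
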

					
\begin{proof}
Recall
						
\[
\mathfrak{P}_{0,1}= \Bigg \lbrace
\begin{bmatrix}
a & 0\\ 
0 & {^t}{\overline{a}}{^{-1}}\\
\end{bmatrix} \mid a \in K_1=1 + \varpi_E \mathrm{M}_n(\mathfrak{O}_E)\Bigg \rbrace.
\]

Clearly, $\mathfrak{P}_{0,1} \cong K_1$. Now let us define $\mathfrak{P}_{1}= \mathfrak{P}_{-}\mathfrak{P}_{0,1} \mathfrak{P}_{+}$. We can observe clearly that $\mathfrak{P}$ is a subgroup of $\mathrm{U}(n,n)\cap \mathrm{GL}_{2n}(\mathfrak{O}_E)$. We have the following group homomorphism:

\begin{center}
	$\phi \colon \mathfrak{P} \xrightarrow{mod \, \mathbf{p}_E} P(k_E)$. 
\end{center}

Here $P(k_E)$ is the Siegel parabolic subgroup of $\{ g\in \mathrm{GL}_{2n}(k_E)\mid {^t}{\overline g}Jg=J\}$. Now $P(k_E)=L(k_E)\ltimes U(k_E)$, where $L(k_E),U(k_E) $ are the Levi component and unipotent radical of the Siegel parabolic subgroup respectively.

\[
L(k_E)
=\Bigg\lbrace \begin{bmatrix}
a & 0\\
0 & {^t}{\overline{a}}{^{-1}}\\
\end{bmatrix}\mid a \in \mathrm{GL}_n(k_E)\Bigg \rbrace,
\]
\[
U(k_E)
=\Bigg\lbrace \begin{bmatrix}
1_n & X\\
0 &  1_n\\
\end{bmatrix}\mid X \in \mathrm{M}_n(k_E), X+^t\overline{X}=0\Bigg \rbrace.
\]

Observe that $\phi$ is a surjective homomorphism. Now let us find the inverse image of $U(k_E)$. Let $j \in \mathfrak{P}$ and $j=j_{-}j_{0}j_{+}$ be the Iwahori factorization of $j$, where $j_{+} \in \mathfrak{P}_{+}, j_{-} \in \mathfrak{P}_{-}, j_{0} \in \mathfrak{P}_{0}$. So $\phi(j) \in U(k_E) \Longleftrightarrow j_{0} \in \mathfrak{P}_{0,1}$. Therefore $\mathfrak{P}_{1}$ is the inverse image of $U(k_E)$ under $\phi$. So we have $\mathfrak{P}\diagup\mathfrak{P}_1 \cong P(k_E)\diagup U(k_E) \cong L(k_E) \cong \mathrm{GL}_n(k_E)$. As $\rho(j)= \rho_0(j_0)$, so $\rho$ is a representation of $\mathfrak{P}$ which is lifted from representation $\rho_0$ of $\mathfrak{P}_0$ that is trivial on $\mathfrak{P}_1$.
						
\end{proof}

\section{Calculation of $N_G(\mathfrak{P}_0)$} \label{sec_4}

We set $G=\mathrm{U}(n,n)$. To describe $\mathcal{H}(G,\rho)$ we need to determine $N_G(\rho_0)$ which is given by
\[ N_G(\rho_0)=\{ m \in N_G(\mathfrak{P}_0) \mid {\rho_0}\simeq \rho_0^m \}.\]  

Further, to find out $N_G(\rho_0)$ we need to determine $N_G(\mathfrak{P}_0)$. To that end we shall calculate $N_{\mathrm{GL}_n(E)}(K_0)$. Let $Z= Z(\mathrm{GL}_n(E))$. So $Z=\{\lambda 1_n \mid \lambda \in E^{\times} \}$.

\begin{lemma}\label{Normalizer_of_K_0_in_GL_n(E)} 
	$N_{\mathrm{GL}_n(E)}(K_0)= K_0Z$. 
\end{lemma}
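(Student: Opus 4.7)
The plan is to establish the two inclusions separately. The containment $K_0 Z \subseteq N_{\mathrm{GL}_n(E)}(K_0)$ is immediate: $K_0$ normalizes itself, and $Z$ is central in $\mathrm{GL}_n(E)$, so $K_0 Z$ is contained in the normalizer. The content is in the reverse inclusion.

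For the reverse inclusion, I would invoke the Cartan decomposition
\[
\mathrm{GL}_n(E) = \coprod_{\lambda_1 \geq \lambda_2 \geq \cdots \geq \lambda_n} K_0 \, d_{\underline{\lambda}} \, K_0, \qquad d_{\underline{\lambda}} = \mathrm{diag}(\varpi_E^{\lambda_1}, \ldots, \varpi_E^{\lambda_n}),
\]
and write any $g \in N_{\mathrm{GL}_n(E)}(K_0)$ as $g = k_1 d k_2$ with $k_1, k_2 \in K_0$ and $d = d_{\underline{\lambda}}$ dominant. Because $k_1, k_2$ already normalize $K_0$, the relation $g K_0 g^{-1} = K_0$ forces $d K_0 d^{-1} = K_0$ as well, so it suffices to show that any dominant diagonal $d$ normalizing $K_0$ must be a scalar.

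To carry out step two, I would probe $d K_0 d^{-1}$ with the elementary unipotents $u_{ji}(c) = 1_n + c E_{ji}$ for $j > i$ and $c \in \mathfrak{O}_E$, each of which lies in $K_0$. A direct conjugation yields $d u_{ji}(c) d^{-1} = 1_n + c\,\varpi_E^{\lambda_j - \lambda_i} E_{ji}$, and membership of this element in $K_0$ forces $\lambda_j - \lambda_i \geq 0$. Since $j > i$ and we already have $\lambda_i \geq \lambda_j$ from dominance, this gives $\lambda_i = \lambda_j$ for all $i < j$. Hence $d = \varpi_E^{\lambda_1} \cdot 1_n \in Z$, which yields $g \in K_0 Z K_0 = Z K_0 = K_0 Z$, completing the proof.

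There is no genuine obstacle here; the argument is essentially a short Cartan-decomposition exercise. The only point requiring a small amount of care is checking that conjugation by $k_1$ and $k_2$ can be absorbed without changing the question (which it can, precisely because $k_1, k_2 \in K_0$), so that the problem reduces to the diagonal case where the computation is transparent.
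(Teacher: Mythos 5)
Your proof is correct and follows exactly the route the paper indicates: its proof of this lemma is the single sentence ``This follows from the Cartan decomposition by a direct matrix calculation,'' and your argument supplies precisely that calculation (reduction to a dominant diagonal representative, then probing with the unipotents $1_n + cE_{ji}$ to force all exponents equal). No gaps; the only implicit point is that $1_n + c\varpi_E^{\lambda_j-\lambda_i}E_{ji}$ has determinant $1$, so membership in $K_0$ is indeed equivalent to integrality of the off-diagonal entry.
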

\begin{proof}
	This follows from the Cartan decomposition by a direct matrix calculation.
\end{proof}\par

From now on let us denote $K_0$ by $K$. Now let us calculate $N_G(\mathfrak{P}_0)$. Note that 
$J=\begin{bmatrix}
0  & 1_n \\
1_n &  0 \\
\end{bmatrix} \in G$. Indeed, $J \in N_G(\mathfrak{P}_0)$. The center $Z(\mathfrak{P}_0)$ of $\mathfrak{P}_0$ is given by

\[
Z(\mathfrak{P}_0)=\Bigg\lbrace
\begin{bmatrix}
u1_n  & 0 \\
0    & \overline{u}^{-1}1_n \\
\end{bmatrix} \mid u \in \mathfrak{O}_E^{\times} \Bigg \rbrace.
\]\par

Recall the center $Z(L)$ of $L$ is given by

\[
Z(L)=\Bigg\lbrace
\begin{bmatrix}
a1_n  & 0 \\
0    & \overline{a}^{-1}1_n \\
\end{bmatrix} \mid a \in E^{\times} \Bigg \rbrace.
\]

\begin{proposition}\label{pro_12}
	$N_G(\mathfrak{P}_0)=\left<\mathfrak{P}_0 Z(L), J \right>= \mathfrak{P}_0Z(L) \rtimes \left<J \right>$.
\end{proposition}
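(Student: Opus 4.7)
The plan is first to verify the easy inclusion and the semidirect product structure, and then to attack the reverse inclusion by a direct block-matrix computation.

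For the easy direction, $\mathfrak{P}_0$ normalizes itself, $Z(L)$ centralizes $L$ and hence normalizes $\mathfrak{P}_0 \subseteq L$, and a direct computation shows that conjugation by $J$ sends $\mathrm{diag}(a, {}^t\overline{a}^{-1})$ to $\mathrm{diag}(a', {}^t\overline{a'}^{-1})$ with $a' = {}^t\overline{a}^{-1} \in \mathrm{GL}_n(\mathfrak{O}_E)$, so $J$ normalizes $\mathfrak{P}_0$ (and similarly $Z(L)$). Since $J^2 = 1$ and $J$ is off-diagonal while every element of $\mathfrak{P}_0 Z(L)$ is block-diagonal, we have $\mathfrak{P}_0 Z(L) \cap \langle J \rangle = \{1\}$, yielding the semidirect product structure.

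For the reverse inclusion I take $g = \begin{bmatrix} A & B \\ C & D \end{bmatrix} \in N_G(\mathfrak{P}_0)$, note that conjugation by $g$ induces a group automorphism $\phi$ of $\mathfrak{P}_0 \cong \mathrm{GL}_n(\mathfrak{O}_E)$, and expand the identity $g \cdot \mathrm{diag}(a, {}^t\overline{a}^{-1}) = \mathrm{diag}(\phi(a), {}^t\overline{\phi(a)}^{-1}) \cdot g$ block by block to get four intertwining relations: $Aa = \phi(a)A$, $B\,{}^t\overline{a}^{-1} = \phi(a)B$, $Ca = {}^t\overline{\phi(a)}^{-1} C$, and $D\,{}^t\overline{a}^{-1} = {}^t\overline{\phi(a)}^{-1} D$, valid for all $a \in \mathrm{GL}_n(\mathfrak{O}_E)$.

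The main technical step, and where I expect the real work, is to show that either $B = C = 0$ or $A = D = 0$. If $A \neq 0$, the relation $Aa = \phi(a)A$ makes $\mathrm{Im}(A)$ a nonzero $\mathrm{GL}_n(\mathfrak{O}_E)$-stable $E$-subspace of $E^n$; since the standard representation is absolutely irreducible (via elementary unipotents and permutation matrices lying in $\mathrm{GL}_n(\mathfrak{O}_E)$), $A$ is invertible and $\phi(a) = AaA^{-1}$. Substituting into the second relation and specializing $a = \lambda \cdot 1_n$ for $\lambda \in \mathfrak{O}_E^{\times}$ collapses everything to $(\lambda \overline{\lambda} - 1) A^{-1} B = 0$, and exhibiting $\lambda \in \mathfrak{O}_E^{\times}$ with $N_{E/F}(\lambda) \neq 1$ (a lift of a non-norm element of $\mathbb{F}_{q^2}^{\times}$ in the unramified case, or $\lambda = 1 + \varpi_E$ in the ramified case) forces $B = 0$; the third relation dually gives $C = 0$. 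If instead $A = 0$, block-invertibility of $g$ forces $B$ and $C$ both to be invertible, and the analogous scalar trick applied to the fourth relation via $\phi(a) = B\,{}^t\overline{a}^{-1} B^{-1}$ (now with a $\lambda$ satisfying $\overline{\lambda} \neq \lambda$, easily produced in either extension type) forces $D = 0$.

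With the dichotomy in hand the conclusion is routine. In the first case $g = \mathrm{diag}(A, D) \in L$ and the unitary condition ${}^t\overline{g} J g = J$ pins down $D = {}^t\overline{A}^{-1}$; transporting Lemma \ref{Normalizer_of_K_0_in_GL_n(E)} along the isomorphism $L \cong \mathrm{GL}_n(E)$ gives $g \in N_L(\mathfrak{P}_0) = \mathfrak{P}_0 Z(L)$. In the second case $Jg$ is block-diagonal, the unitary condition forces $B = {}^t\overline{C}^{-1}$ so that $Jg = \mathrm{diag}(C, {}^t\overline{C}^{-1}) \in L$, and since $Jg$ still normalizes $\mathfrak{P}_0$ the same lemma places it in $\mathfrak{P}_0 Z(L)$; hence $g \in J \cdot \mathfrak{P}_0 Z(L)$. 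Combining yields $N_G(\mathfrak{P}_0) \subseteq \mathfrak{P}_0 Z(L) \rtimes \langle J \rangle$, completing the proof.
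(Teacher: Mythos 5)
Your proof is correct and essentially parallels the paper's: both arguments hinge on conjugating a scalar element of $\mathfrak{P}_0$ by $g$ and observing that the off-diagonal (resp.\ diagonal) blocks must vanish because one can choose a unit $\lambda$ (resp.\ $u$) with $\lambda\overline\lambda\neq 1$. The organizational difference is that the paper first passes to $N_G(Z(\mathfrak{P}_0))\supseteq N_G(\mathfrak{P}_0)$ and reads the block structure off an eigenvalue comparison for a single well-chosen scalar, whereas you work with the full induced automorphism $\phi$ of $\mathfrak{P}_0\cong\mathrm{GL}_n(\mathfrak{O}_E)$, invoke irreducibility of the standard representation to force $A$ to be either zero or invertible, and only then specialize to scalars. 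Both routes are valid; the paper's is marginally shorter since it never needs the irreducibility step, while yours has the advantage of making the intertwining structure of all four blocks explicit. The endgame (pin down $D={}^t\overline{A}^{-1}$ via the unitary condition and invoke Lemma~\ref{Normalizer_of_K_0_in_GL_n(E)}) is identical.

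One small correction: in the second case ($A=0$) the relation you get from $a=\lambda 1_n$ in the fourth block equation is $\overline\lambda^{-1}D=\lambda D$, so the condition you need is again $\lambda\overline\lambda\neq 1$ (equivalently $N_{E/F}(\lambda)\neq1$), not $\overline\lambda\neq\lambda$ as written. For instance, in the unramified case a unit whose residue is a nontrivial $(q+1)$-st root of unity satisfies $\overline\lambda\neq\lambda$ yet $\lambda\overline\lambda=1$, and would give nothing. Since you already produced a $\lambda$ with $\lambda\overline\lambda\neq1$ for the first case, the fix is to reuse the same $\lambda$ rather than a different one.
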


\begin{proof}
	It easy to see that $N_G(\mathfrak{P}_0)\leqslant N_G(Z(\mathfrak{P}_0))$. Now suppose
	$g= 
	\begin{bmatrix}
	A & B\\
	C & D\\
	\end{bmatrix} \in N_G(Z(\mathfrak{P}_0))$, where $A,B,C,D \in \mathrm{M}_n(E)$. Let us choose $u \in \mathfrak{O}_E^{\times}$ such that $u \neq \overline{u}^{-1}$. Now such a $u$ exists in $\mathfrak{O}_E^{\times}$. Because if $u=\overline{u}^{-1}$ for all $u \in \mathfrak{O}_E^{\times}$ then $\overline{u}= u^{-1}$ for all $u \in \mathfrak{O}_E^{\times}$. But $\mathfrak{O}_E^{\times} \cap F^{\times}= \mathfrak{O}_F^{\times}$. Therefore $u= u^{-1}$ for all $u \in \mathfrak{O}_F^{\times}$ or $u^2=1$ for all $u \in \mathfrak{O}_F^{\times}$ which is a contradiction.

	\begin{center}
		As $\begin{bmatrix}
		A & B\\
		C & D
		\end{bmatrix} \in N_G(Z(\mathfrak{P}_0)),$
	\end{center}
	
	\[
	\begin{bmatrix}
	A & B\\
	C & D
	\end{bmatrix}
	\begin{bmatrix}
	u1_n & 0\\
	0   & \overline{u}^{-1}1_n
	\end{bmatrix}
	\begin{bmatrix}
	A & B\\
	C & D
	\end{bmatrix}^{-1}=
	\begin{bmatrix}
	v1_n & 0\\
	0   & \overline{v}^{-1}1_n
	\end{bmatrix}
	\] for some $v \in \mathfrak{O}_E^{\times}$. The left and right hand sides must have the same eigenvalues. So $u$= $v$ or $\overline{v}^{-1}$. Let $u=v$. Then we have
	
	\begin{center}
		$\begin{bmatrix}
		A & B\\
		C & D\\
		\end{bmatrix}
		\begin{bmatrix}
		u1_n & 0\\
		0   & \overline{u}^{-1}1_n\\
		\end{bmatrix}
		\begin{bmatrix}
		A & B\\
		C & D\\
		\end{bmatrix}^{-1}$=
		$\begin{bmatrix}
		v1_n & 0\\
		0   & \overline{v}^{-1}1_n\\
		\end{bmatrix}$
	\end{center}
	
	\begin{center}
		$\Longrightarrow$
		$\begin{bmatrix}
		Au & B\overline{u}^{-1}\\
		Cu & D\overline{u}^{-1}\\
		\end{bmatrix}=
		\begin{bmatrix}
		Av & Bv\\
		C\overline{v}^{-1} & D\overline{v}^{-1}\\
		\end{bmatrix}.$
	\end{center}\par
	
	As $u=v$, so $Au=Av, D\overline{u}^{-1}= D\overline{v}^{-1}$. Now as $u \neq \overline{v}^{-1}$ (i.e $v \neq \overline{u}^{-1}$), from the above matrix relation we can see that $B\overline{u}^{-1}= Bv$, $Cu=C\overline{v}^{-1}$ for arbitrary matrices $B$ and $C$. So this would imply that $B=C=0$. In a similar way, we can show that if $u= \overline{v}^{-1}$ then $A=D=0$. Hence any element of $N_G(Z(\mathfrak{P}_0))$ is of the form 
	$\begin{bmatrix}
	A & 0\\
	0 & D
	\end{bmatrix}$ or
	$\begin{bmatrix}
	0 & B\\
	C & 0
	\end{bmatrix}$ with $A,B,C,D \in \mathrm{GL}_n(E)$. As $N_G(\mathfrak{P}_0) \leqslant 
	N_G(Z(\mathfrak{P}_0))$, so any element which normalizes $\mathfrak{P}_0$ is also of the form 
	$\begin{bmatrix}
	A & 0\\
	0 & D
	\end{bmatrix}$ or
	$\begin{bmatrix}
	0 & B\\
	C & 0
	\end{bmatrix}$ with $A,B,C,D \in \mathrm{GL}_n(E)$.
	
	\begin{center}
		If 
		$\begin{bmatrix}
		A & 0\\
		0 & D
		\end{bmatrix}$ normalizes $\mathfrak{P}_0$ then
	\end{center}
	\begin{center}
		$\begin{bmatrix}
		A & 0\\
		0 & D
		\end{bmatrix}
		\begin{bmatrix}
		a & 0 \\
		0 & {^t}{\overline a}{^{-1}}
		\end{bmatrix}
		\begin{bmatrix}
		A^{-1} & 0   \\
		0      & D^{-1}
		\end{bmatrix}  \in \mathfrak{P}_0$ for all $a \in K$.\\
	\end{center}
	
	\begin{center}
		$\Longrightarrow
		\begin{bmatrix}
		AaA^{-1}  &  0 \\
		0         &  D{^t}{\overline a}{^{-1}}D^{-1}\\
		\end{bmatrix} \in \mathfrak{P}_0$ for all $a \in K$.
	\end{center}\par
	
	Hence $AaA^{-1}, D{^t}{\overline a}{^{-1}}D^{-1} \in K $ for all $a \in K$. So this implies that $A,D \in N_{\mathrm{GL}_n(E)}(K)=ZK=KZ$ from lemma \ref{Normalizer_of_K_0_in_GL_n(E)} and also ${^t}{\overline{(AaA^{-1})}}{^{-1}}= D{^t}{\overline a}{^{-1}}D^{-1}$ for all $a \in K$. If
	${^t}{\overline{(AaA^{-1})}}{^{-1}}= D{^t}{\overline a}{^{-1}}D^{-1}$ for all $a \in K$ then  ${^t}{\overline A}{^{-1}}  {^t}{\overline a}{^{-1}}{^t}{\overline A}= D{^t}{\overline a}{^{-1}}D^{-1}$ for all $a \in K \Longrightarrow A= {^t}{\overline D}{^{-1}}$ (i.e $D= {^t}{\overline A}{^{-1}}$). And as $A \in ZK$, so $A=zk$ for some $z \in Z, k \in K$. Hence
	\begin{center}
		$\begin{bmatrix}
		A & 0\\
		0 & D
		\end{bmatrix}=
		\begin{bmatrix}
		zk & 0\\
		0  & {^t}{\overline{(zk)}}{^{-1}}
		\end{bmatrix}$.
	\end{center}\par
	
	Similarly, we can show that if 
	$\begin{bmatrix}
	0 & B\\
	C & 0
	\end{bmatrix}$ normalizes $\mathfrak{P}_0$ then 
	\begin{center}
		$\begin{bmatrix}
		0 & B\\
		C & 0
		\end{bmatrix}=
		\begin{bmatrix}
		0 & z'k'\\
		{^t}{\overline{(z'k')}}{^{-1}} & 0
		\end{bmatrix}$ for some $z' \in Z, k' \in K$.
	\end{center}\par
	
	If 
	$\begin{bmatrix}
	A & 0\\
	0 & D
	\end{bmatrix} \in  N_G(\mathfrak{P}_0)$, we have shown that it looks like
	$\begin{bmatrix}
	zk & 0\\
	0 &  {^t}{\overline {(zk)}}{^{-1}}
	\end{bmatrix}$ and if 
	$\begin{bmatrix}
	0 & B\\
	C & 0
	\end{bmatrix} \in  N_G(\mathfrak{P}_0)$, we have shown that it looks like 
	$\begin{bmatrix}
	0 & z'k'\\
	{^t}{\overline {(z'k')}}{^{-1}} & 0
	\end{bmatrix}$ where $z,z' \in Z$, $k,k' \in K$. We know that $J \in N_G(\mathfrak{P}_0)$ and as 
	$\begin{bmatrix}
	0 & B\\
	C & 0
	\end{bmatrix}J=
	\begin{bmatrix}
	B & 0\\
	0 & C
	\end{bmatrix}$, so \[N_G(\mathfrak{P}_0)=\left< J; 
	\begin{bmatrix}
	zk & 0\\
	0  & {^t}{\overline{(zk)}}{^{-1}}\\
	\end{bmatrix} \mid z\in Z, k \in K \right>=\mathfrak{P}_0Z(L) \rtimes \left<J \right>\].
\end{proof}

\section{Calculation of $N_G(\rho_0)$} \label{Sec_5}

\subsection{Unramified case:}

We now calculate $N_G(\rho_0)$ in the unramified case. This will help in determining the structure of $\mathcal{H}(G,\rho)$. To do that, first we need to understand $\rho_0$. As $\rho_0$ is an irreducible cuspidal representation of ${\rm GL}_n(k_E)$, to understand it, we need the classification of the irreducible cuspidal representations of ${\rm GL}_n(k_E)$. 
This can be deduced from the Green parametrization \cite{green} or is a very special case of the Deligne-Lusztig construction. Let $l/k_E$ be a field extension of degree $n$. We set $\Gamma = {\rm Gal}(l/k_E)$.
 
Let 
\[
(l^\times)^\vee = {\rm Hom}(l^\times, \mathbb{C}^\times).
\]
Clearly, $\Gamma$ acts on $(l^\times)^\vee$ via
\[
\theta^\gamma (x) = \theta({}^\gamma x), \quad \theta \in (l^\times)^\vee, \,\,\gamma \in \Gamma, \,\,x \in l^\times.
\]     
We write $(l^\times)^\vee_{{\rm reg}}$ for the group of regular characters of $l^\times$ with respect to this action, that is, 
characters $\theta$ such that ${\rm Stab}_\Gamma (\theta)= \{ 1 \}$. 
We also write $l^\times_{\rm reg}$ for the regular elements in $l^\times$, that is, elements $x$ such that 
${\rm Stab}_\Gamma (x)= \{ 1 \}$. 
The set of $\Gamma$-orbits on  
$(l^\times)^\vee_{{\rm reg}}$ is then in canonical bijection with the set ${\rm Irr}_{\rm cusp} \, {\rm GL}_n(k_E)$ of equivalence 
classes of irreducible cuspidal representations of ${\rm GL}_n(k_E)$:
\begin{align*} 
\Gamma \backslash (l^\times)^\vee_{{\rm reg}}  \,\,\,     
&\longleftrightarrow \,\,\,  {\rm Irr}_{\rm cusp} \, {\rm GL}_n(k_E)  \\
\theta &\longleftrightarrow \tau_\theta.
\end{align*} 
The bijection is specified by a character relation         
\[
\tau_\theta (x) = c  \sum_{\gamma \in \Gamma}    \theta^\gamma(x), \quad x \in l^\times_{\rm reg}, 
\]
for a certain constant $c$ that is independent of $\theta$ and $x$. We denote $\rho_0$ by $\tau_{\theta}$.\par

Note that we have $k_E=\mathbb{F}_{q^2}$. So $l= \mathbb{F}_{q^{2n}}$.\par

As $\Gamma=\mathrm{Gal}(l/k_E)$, $\Gamma$ is generated by the Frobenius map $\Phi$ given by $\Phi(\lambda)=\lambda^{q^2}$ for $\lambda \in l$. Here $\Phi^n(\lambda)= \lambda^{q^{2n}}=\lambda$ (since $l^{\times}$ is a cyclic group of order $q^{2n}-1$) $\Longrightarrow \Phi^n= 1$. \par

Note that for two regular characters $\theta$ and $\theta'$ we have $\tau_{\theta}\simeq\tau_{\theta'} \Longleftrightarrow$ there exists $\gamma \in \Gamma$ such that $\theta^{\gamma}= \theta'$. \par

As we are in the unramified case, so $\mathrm{Gal}(k_E/k_F) \cong \mathrm{Gal}(E/F)$. Let $\iota\colon \mathrm{GL}_n(k_E) \longrightarrow \mathrm{GL}_n(k_E)$ be a group homomorphism given by: $\iota(g)= {^t}{\overline g}{^{-1}}$. Let us denote $\tau_\theta\circ\iota$ by ${\tau_\theta}^\iota$. So ${\tau_\theta}^\iota(g)=\tau_\theta(\iota(g))= \tau_{\theta}({^t}{\overline g}{^{-1}})$ for $g \in \mathrm{GL}_n(k_E)$. We also denote $\overline{\tau_\theta}(g)$ for $\tau_\theta(\overline g)$ for $g \in \mathrm{GL}_n(k_E)$. It can be observed clearly as $\theta$ is a character of $l^\times$, so $\theta(\lambda^m)= \theta^m(\lambda)$ for $m \in \mathbb{Z}, \lambda \in l^\times $. \par

Let $\tau_\theta^{\vee}$ be the dual representation of $\tau_\theta$. Let $V$ be the vector space corresponding to $\tau_\theta$ which is finite dimensional. Choose a basis $\{v_1, v_2, \ldots v_n\}$ of the vector space $V$. The dual basis $\{v_1^*, v_2^*, \ldots v_n^*\}$ for the dual space $V^*$ of $V$ can be constructed such that $v_i^*(v_j)= \delta_{ij}$ for $1 \leqslant i,j \leqslant n$. Suppose with respect to the above basis $\{v_1, v_2, \ldots v_n\}$, $\tau_\theta(g^{-1})$ represents matrix $A$ and  with respect to the dual basis $\{v_1^*, v_2^*, \ldots v_n^*\}$, $\tau_\theta^{\vee}(g)$ represents matrix $B$,then $A={^t}B$.\par 

From Proposition 3.5 in \cite{Thomas2014} we have $\overline\tau_\theta \simeq \tau_{\theta^q}$ and from Proposition 3.4 in \cite{Thomas2014} we have $\tau_\theta^{\vee} \simeq \tau_{\theta^{-1}}$.

\begin{proposition}\label{pro_5}
	
	Let $\theta$ be a regular character of $l^{\times}$. Then  $\tau_\theta^\iota \simeq \tau_\theta \Longleftrightarrow \theta^\gamma = \theta^{-q}$ for some $\gamma \in \mathrm{Gal}(l/k_E)$.
	
\end{proposition}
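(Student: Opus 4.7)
The plan is to express $\iota$ as a composition of automorphisms of $\mathrm{GL}_n(k_E)$ whose effect on the cuspidal representations is already known, thereby reducing the question to one about $\Gamma$-orbits of regular characters of $l^\times$.

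First, I would factor $\iota = \sigma \circ F$, where $\sigma(g) = {}^t g^{-1}$ and $F(g) = \overline{g}$ is the entry-wise Frobenius coming from $\mathrm{Gal}(k_E/k_F)$. These two automorphisms commute, and directly from the definitions one obtains
\[
\tau_\theta^{\,\iota}(g) \;=\; \tau_\theta\bigl(\sigma(\overline{g})\bigr) \;=\; \overline{(\tau_\theta \circ \sigma)}(g),
\]
so $\tau_\theta^{\,\iota} \simeq \overline{\tau_\theta \circ \sigma}$ as representations of $\mathrm{GL}_n(k_E)$.

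Next, I would identify $\tau_\theta \circ \sigma$ with the contragredient $\tau_\theta^{\vee}$. This is a standard fact about $\mathrm{GL}_n$ over a field: every matrix is conjugate to its transpose (rational canonical form), so $g^{-1}$ is conjugate to ${}^t g^{-1}$ for every $g \in \mathrm{GL}_n(k_E)$, and the characters of $\tau_\theta \circ \sigma$ and $\tau_\theta^{\vee}$ therefore agree; being irreducible with the same character, they are isomorphic. Feeding this into the two recalled identities $\tau_\theta^{\vee} \simeq \tau_{\theta^{-1}}$ and $\overline{\tau_\theta} \simeq \tau_{\theta^{q}}$ yields
\[
\tau_\theta^{\,\iota} \;\simeq\; \overline{\tau_\theta^{\vee}} \;\simeq\; \overline{\tau_{\theta^{-1}}} \;\simeq\; \tau_{(\theta^{-1})^{q}} \;=\; \tau_{\theta^{-q}}.
\]

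Finally, the Green parametrization asserts that $\tau_\alpha \simeq \tau_\beta$ exactly when $\alpha$ and $\beta$ lie in the same $\Gamma$-orbit, so $\tau_\theta^{\,\iota} \simeq \tau_\theta$ is equivalent to $\theta^{\gamma} = \theta^{-q}$ for some $\gamma \in \Gamma = \mathrm{Gal}(l/k_E)$, which is what is to be proved. The only non-routine ingredient is the identification $\tau \circ \sigma \simeq \tau^{\vee}$; once that classical fact about finite general linear groups is in hand, the remainder is bookkeeping with the two previously stated equivalences and the bijection between $\Gamma$-orbits of regular characters and irreducible cuspidals of $\mathrm{GL}_n(k_E)$.
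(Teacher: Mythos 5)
Your proposal is correct and follows essentially the same route as the paper: both establish $\tau_\theta\circ\sigma\simeq\tau_\theta^{\vee}$ via the conjugacy of a matrix with its transpose (the paper's automorphism $\eta$ is your $\sigma$), then combine this with the recalled identities $\overline{\tau_\theta}\simeq\tau_{\theta^q}$ and $\tau_\theta^{\vee}\simeq\tau_{\theta^{-1}}$ to get $\tau_\theta^{\iota}\simeq\tau_{\theta^{-q}}$, and finish with the Green parametrization. Your phrasing as a single unconditional isomorphism $\tau_\theta^{\iota}\simeq\tau_{\theta^{-q}}$ followed by one appeal to the $\Gamma$-orbit criterion is slightly tidier than the paper's separate treatment of the two directions, but it is the same argument.
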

\begin{proof}
	$\Longrightarrow$
	As  $\tau_\theta^\iota \simeq \tau_\theta$, so $\chi_{\tau_\theta^\iota}(g)=\chi_{\tau_\theta}(g)$ for $g \in \mathrm{GL}_n(k_E)$. But $\chi_{\tau_\theta^\iota}(g)= \chi_{\tau_\theta}( {^t}{\overline g}{^{-1}})$,  since  $\chi_{\tau_\theta^\iota}(g)=\chi_{\tau_\theta}(\iota(g))$ for $ g \in \mathrm{GL}_n(k_E)$. As we know from the above discussion that $\tau_\theta^{\vee}(g)= (\tau_\theta(g^{-1}))^t $, so $trace(\tau_\theta^{\vee}(g))= trace(\tau_\theta(g^{-1}))^t$. Now $trace(\tau_\theta(g^{-1}))= trace(\tau_\theta(g^{-1}))^t$ as the trace of the matrix and it's transpose are same. So we have $trace(\tau_\theta(g^{-1}))=trace(\tau_\theta^{\vee}(g))$. Let us choose $h \in \mathrm{GL}_n(k_E)$ such that $h^{-1}{^t}g{^{-1}} h= g^{-1}$. So, $\chi_{\tau_\theta^{\vee}}(g)= \chi_{\tau_\theta}(g^{-1})= \chi_{\tau_\theta}(h^{-1}{^t}g{^{-1}} h) = \chi_{\tau_\theta}({^t}g{^{-1}})$. Let us denote $\tau_\theta^\eta(g)$ for  $\tau_\theta(\eta(g))$ where $\eta \colon g \longrightarrow {^t}g{^{-1}}$ is a group automorphism of $\mathrm{GL}_n(k_E)$. Hence $\chi_{\tau_\theta^\eta}(g)= \chi_{\tau_\theta}({^t}g{^{-1}})$. But we have already shown before that $\chi_{\tau_\theta}({^t}g{^{-1}})=\chi_{\tau_\theta^{\vee}}(g)$. so $\chi_{\tau_\theta^{\vee}}(g) = \chi_{\tau_\theta^\eta}(g)$. This implies $\tau_\theta^{\vee} \simeq \tau_\theta^\eta$. Hence $\tau_\theta^\iota= \overline\tau_\theta^\eta \simeq \overline\tau_\theta^{\vee} \simeq \tau_{\theta^q}^{\vee} \simeq \tau_{\theta^{-q}}$(since $\tau_\theta^{\vee} \simeq \tau_\theta^\eta, \overline\tau_\theta \simeq \tau_{\theta^q}, \tau_\theta^{\vee} \simeq \tau_{\theta^{-1}})$. Now from the hypothesis of Proposition, we know that $\tau_\theta^\iota \simeq \tau_\theta$, so this implies $\tau_\theta \simeq \tau_{\theta^{-q}}$  (since $ \tau_\theta^\iota \simeq \tau_{\theta^{-q}}$). But as $\theta$ is a regular character $\theta^\gamma= \theta^{-q}$ for some $\gamma \in \Gamma= \mathrm{Gal}(l/k_E)$ where $[l\colon k_E]= n$.\par
	
	$\Longleftarrow$ Now we can reverse the arguments and show that if $\theta^\gamma= \theta^{-q}$ for some
	$\gamma \in \Gamma= \mathrm{Gal}(l/k_E)$ then $ \tau_\theta^\iota \simeq \tau_{\theta^{-q}}$.
\end{proof}

\begin{proposition}\label{pro_6}
	If $\theta$ is a regular character of $l^{\times}$ such that $\theta^\gamma= \theta^{-q}$ for some $\gamma \in \Gamma$ then $n$ is odd. Conversely, if $n=2m+1$ is odd and $\theta$ is a regular character of $l^{\times}$ then $\theta^{\Phi^{m+1}}= \theta^{-q}$.
\end{proposition}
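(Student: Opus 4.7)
My approach is to translate the Galois action on $\theta$ into exponents on the underlying cyclic group $l^{\times}$, extract a condition on the Galois group $\Gamma$, and then close using regularity. Writing the given element as $\gamma = \Phi^{k}$ for some $k \in \{0, 1, \ldots, n-1\}$, the hypothesis $\theta^{\gamma} = \theta^{-q}$ becomes
\[
\theta^{q^{2k}} \;=\; \theta^{-q},
\]
since $\theta^{\Phi}(\lambda) = \theta(\lambda^{q^{2}}) = \theta(\lambda)^{q^{2}}$ because $\theta$ is a homomorphism on a cyclic group.

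Next I would apply $\gamma$ once more to the relation to obtain a new identity. Using that $\gamma$ commutes with the integer-power operation on characters,
\[
\theta^{\gamma^{2}} \;=\; (\theta^{-q})^{\gamma} \;=\; (\theta^{\gamma})^{-q} \;=\; \theta^{q^{2}},
\]
which in exponent form reads $\theta^{q^{4k}} = \theta^{q^{2}}$, i.e.\ $\theta^{q^{4k} - q^{2}} = 1$. Because $\mathrm{ord}(\theta)$ divides $|l^{\times}| = q^{2n} - 1$ and is therefore coprime to $q$, the factor $q^{2}$ is invertible modulo $\mathrm{ord}(\theta)$ and may be cancelled, yielding $\theta^{q^{4k-2} - 1} = 1$, that is, $\theta^{\Phi^{2k-1}} = \theta$.

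At this point regularity closes the first direction: since $\mathrm{Stab}_{\Gamma}(\theta) = \{1\}$, the identity $\theta^{\Phi^{2k-1}} = \theta$ forces $\Phi^{2k-1} = 1$ in $\Gamma$, so $n \mid 2k-1$; since $2k-1$ is odd, $n$ must be odd. For the converse, with $n = 2m+1$ the congruence $2k \equiv 1 \pmod{n}$ has the unique solution $k \equiv m+1 \pmod{n}$ (verified by $2(m+1) = n+1$), so necessarily $\gamma = \Phi^{m+1}$, giving $\theta^{\Phi^{m+1}} = \theta^{-q}$.

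The only delicate point is the bookkeeping between integer exponents $\theta^{a}$ and Galois exponents $\theta^{\sigma}$; once the identification $\theta^{\Phi^{j}} = \theta^{q^{2j}}$ is kept straight, the argument reduces to the congruence step above together with one invocation of regularity. The degenerate value $k=0$ is handled uniformly: there the derived relation $\theta^{\Phi^{-1}} = \theta$ still forces $\Phi = 1$ in $\Gamma$, i.e.\ $n=1$, which is consistent with $n = 2(0)+1$ and $k = m+1 = 1 \equiv 0 \pmod{n}$.
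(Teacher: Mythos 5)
Your forward direction is correct and takes essentially the same route as the paper: applying $\gamma$ twice yields $\theta^{\gamma^2} = \theta^{q^2} = \theta^{\Phi}$, regularity forces $\gamma^2 = \Phi$ (equivalently $\Phi^{2k-1}=1$, i.e.\ $n \mid 2k-1$), and $n$ is then odd because it divides an odd integer. Your exponent bookkeeping is a faithful re-expression of the paper's argument, and the $k=0$ remark is fine.

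The converse, however, has a genuine gap. What must be shown is that the relation $\theta^{\Phi^{m+1}} = \theta^{-q}$ actually holds; your argument only determines what $k$ would have to be \emph{if} some $\gamma\in\Gamma$ satisfied $\theta^{\gamma}=\theta^{-q}$. Solving $2k\equiv 1\ (\mathrm{mod}\ n)$ pins down the unique candidate $\gamma = \Phi^{m+1}$, but says nothing about whether that candidate works — you have assumed the existence you were asked to establish. In fact, unwinding the claim, $\theta^{\Phi^{m+1}} = \theta^{-q}$ is equivalent to $\theta^{q^{n+1}+q}=1$, i.e.\ to $\mathrm{ord}(\theta) \mid q^n+1$ (since $q^{n+1}+q = q(q^n+1)$ and $\mathrm{ord}(\theta)$ is prime to $q$); regularity alone does not force this divisibility, as a generator of the character group of $l^{\times}$ is regular yet has order $q^{2n}-1 \nmid q^n+1$. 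The paper's proof of this direction is therefore of a different nature: it \emph{constructs} a particular character of exact order $q^n+1$, verifies the relation for that character by direct computation, and then separately checks that this character is regular. That is an existence statement rather than the universal one you tried to prove, and it is the existence statement that is actually used downstream — notice the extra hypothesis $\theta^{q^{n+1}}=\theta^{-q}$ appearing explicitly in Theorem \ref{the_1}.
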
 

\begin{proof}
	$\Longrightarrow$ Suppose $\theta$ is a regular character of $l^{\times}$ such that $\theta^\gamma= \theta^{-q}$ for some $\gamma \in \Gamma$. We know that $\Gamma=<\Phi>$ where $\Phi \colon l \longrightarrow l$ is the Frobenius map given by $\Phi(\lambda)=\lambda^{q^2}$ for $\lambda \in l$. Now $\Phi^n(\lambda)= \lambda^{q^{2n}}=\lambda$ for $\lambda \in l\Longrightarrow \Phi^n= 1$. Now we have $(\theta^{-q})^\gamma= (\theta^\gamma)^{-q}$. Hence $(\theta)^{\gamma^2}= (\theta^\gamma)^\gamma=(\theta^{-q})^\gamma= (\theta^\gamma)^{-q}= (\theta^{-q})^{-q} =\theta^{q^2}$. Now $\theta^{q^2}=\theta^\Phi$ because for $\lambda \in l^{\times}, \, \, \theta^{q^2}(\lambda)= \theta(\lambda^{q^2})= \theta(\Phi(\lambda))= \theta^\Phi(\lambda)$. As $\theta$ is a regular character and  $(\theta)^{\gamma^2}= \theta^\Phi$, so $\gamma^2= \Phi$. Let $\Phi$ be a generator of $\Gamma$ and $\gamma^2 = \Phi$. So $\gamma$ is also a generator of $\Gamma$.\par
	
	Hence order of $\gamma^2$= order of $\Phi \Longrightarrow \frac{n}{g.c.d (2,n)}=n  \Longrightarrow g.c.d (2,n)=1$. So $n$ is odd.\par
	
	$\Longleftarrow$ Suppose $n$ is odd. Let $n=2m+1$ where $m \in \mathbb{N}$. Now
	
	\begin{center} 
		$\mathrm{Hom}(l^{\times}, \mathbb{C^\times}) \cong l^{\times}$.\\
	\end{center}\par
	
	So $\mathrm{Hom}(l^{\times}, \mathbb{C^\times})$ is a cyclic group of order $(q^{2n}-1)$. Hence for every divisor $d$ of $(q^{2n}-1)$, there exists an element in  $\mathrm{Hom}(l^{\times}, \mathbb{C^\times})$ of order $d$. As $(q^n+1)$ is a divisor of $(q^{2n}-1)$, hence there exists an element $\theta$ in $\mathrm{Hom}(l^{\times}, \mathbb{C^\times})$ of order $(q^n+1)$. Hence $\theta^{q^n+1}=1 \Longrightarrow \theta^{q^n}= \theta^{-1} \Longrightarrow \theta^{q^{n+1}}= \theta^{-q} \Longrightarrow \theta^{q^{2m+2}}= \theta^{-q}\Longrightarrow \theta^{(q^{2})^{m+1}}= \theta^{-q} \Longrightarrow \theta^{\Phi^{m+1}} = \theta^{-q}\Longrightarrow \theta^\gamma= \theta^{-q}$, where $\gamma= \Phi^{m+1} \in \Gamma$. \par
	
	Now we claim that $\theta$ is a regular character in $\mathrm{Hom}(l^{\times}, \mathbb{C^\times})$. Suppose $ \theta^\gamma= \theta$ for some $\gamma \in \Gamma$. Let $\gamma= \Phi^k$ for some $k \in \mathbb{N}$. So we have $\theta^{\Phi^k}= \theta$. But $\theta^\Phi= \theta^{q^2}$, hence $\theta^{q^{2k}}=\theta$. That implies $\theta^{q^{2k}-1}=1$. As $\theta$ has order $(q^n+1)$, so $(q^n+1) \mid (q^{2k}-1)$. Let $l=2k$, so we have $(q^n+1) \mid (q^l-1)$ . If $l<n$ then it is a contradiction to the fact that $(q^n+1) \mid (q^l-1)$. Hence $l>n$. Now by applying Euclidean Algorithm for the integers $l,n$ we have $l=nd+r$ for some $0 \leqslant r < n$ and $d \geqslant 0 $ where $r,d \in \mathbb{Z}$. Now $d\neq 0$, because if $d=0$ then $l=r$ and that means $l<n$ which is a contradiction. So $d \in \mathbb{N}$. As we have  $(q^n+1) \mid (q^l-1) \Longrightarrow (q^n+1) \mid ((q^l-1)+(q^n+1)) \Longrightarrow (q^n+1) \mid (q^l+ q^n) \Longrightarrow (q^n+1) \mid q^n(q^r.q^{n(d-1)}+ 1)$. Now as $q^n$ and  $(q^n+1)$ are relatively prime, so  $(q^n+1) \mid (q^r.q^{n(d-1)}+ 1) \Longrightarrow (q^n+1) \mid ((q^r.q^{n(d-1)}+ 1)-(q^n+1)) \Longrightarrow (q^n+1) \mid q^n(q^r.q^{n(d-2)}- 1)$. As  $q^n$ and  $(q^n+1)$ are relatively prime, so  $(q^n+1) \mid (q^r.q^{n(d-2)}- 1)$. So continuing the above process we get, $(q^n+1)\mid (q^r+1)$ if $d$ is odd or $(q^n+1)\mid (q^r-1)$ if $d$ is even. But degree of $(q^n+1)$ is greater than degree of $(q^r+1)$ as $r<n$. So $r$ has to be equal to 0 and $l=2k= nd+r=nd$. And that implies $2 \mid nd$. But  $n$ is odd so $2 \mid d$. Now this means that $d$ is even and hence $(q^n+1)\mid (q^r-1)$. And  $(q^n+1)\mid (q^r-1)$ is not possible because $r=0$. So we have $2k=nd \Longrightarrow n \mid 2k$. But as $n$ is odd this implies $n \mid k$. And this further implies $k=np$ for some $p \in \mathbb{N}$. So $\gamma= \Phi^k= \Phi^{np}=1 \Longrightarrow \theta$ is regular character.
	
\end{proof}

Combining Proposition ~\ref{pro_5} and Proposition ~\ref{pro_6}, we have the following Proposition.

\begin{proposition}\label{pro_7}
	
	Let $\theta$ is a regular character of $l^{\times}$. Then $\tau_\theta^\iota \simeq \tau_\theta \Longleftrightarrow n$  is odd.
\end{proposition}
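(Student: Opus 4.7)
The plan is to prove Proposition 7 by directly chaining the two preceding propositions. Proposition 5 provides an equivalence between the representation-theoretic statement $\tau_\theta^\iota \simeq \tau_\theta$ and the purely arithmetic statement that there exists $\gamma \in \Gamma = \mathrm{Gal}(l/k_E)$ with $\theta^\gamma = \theta^{-q}$. Proposition 6 then links this arithmetic condition to the parity of $n$. So the entire argument is a transitivity of equivalences, with no new computation needed.

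For the forward direction, I would assume $\tau_\theta^\iota \simeq \tau_\theta$. By Proposition 5 this yields some $\gamma \in \Gamma$ satisfying $\theta^\gamma = \theta^{-q}$. Since $\theta$ is a regular character by hypothesis, the forward direction of Proposition 6 applies and forces $n$ to be odd.

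For the reverse direction, I would assume that $n = 2m+1$ is odd. The converse part of Proposition 6 furnishes the explicit relation $\theta^{\Phi^{m+1}} = \theta^{-q}$ in $\Gamma$; setting $\gamma = \Phi^{m+1}$ we have $\theta^\gamma = \theta^{-q}$, so Proposition 5 gives $\tau_\theta^\iota \simeq \tau_\theta$.

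Since each step is a direct citation of a result already proven in the paper, I do not anticipate any real obstacle; the work has essentially been done in Propositions 5 and 6. The only subtlety to watch for is a consistent choice of the element $\gamma \in \Gamma$ between the two directions — in the converse direction one must make sure that the specific element $\Phi^{m+1}$ produced by Proposition 6 is indeed of the form needed to feed into the hypothesis of Proposition 5, which it is, since Proposition 5 only requires the existence of \emph{some} $\gamma \in \Gamma$ with $\theta^\gamma = \theta^{-q}$.
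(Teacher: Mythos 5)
Your proposal is exactly the paper's argument: the paper proves Proposition \ref{pro_7} by the single line ``Combining Proposition \ref{pro_5} and Proposition \ref{pro_6}'', which is precisely the chaining of equivalences you describe. Your handling of the element $\gamma$ in each direction matches what the paper implicitly does, so there is nothing to add.
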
\par

We know that $\rho_0$ is an irreducible supercuspidal representation of $K$. But $K \cong \mathfrak{P}_0$. So $\rho_0$ can be viewed as a representation of $\mathfrak{P}_0$. Now let us compute $N_G(\rho_0)$, where $N_G(\rho_0)=\{ m \in N_G(\mathfrak{P}_0) \mid {\rho_0}\simeq \rho_0^m \}$. Let $m \in N_G(\mathfrak{P}_0)$. Let $m$ be $J$ or of the form 
$\begin{bmatrix}
zk & 0\\
0 &  {^t}{\overline {(zk)}}{^{-1}}
\end{bmatrix}$ for some $z \in Z, k \in K$.

\begin{proposition}\label{pro_13}
	If $m=
	\begin{bmatrix}
	zk & 0\\
	0 &  {^t}{\overline {(zk)}}{^{-1}}
	\end{bmatrix}$ for some $z \in Z, k \in K$ then ${\rho_0}^m\simeq \rho_0$.
\end{proposition}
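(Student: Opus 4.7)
The plan is a direct calculation: compute the conjugate action of $m$ on $\mathfrak{P}_0$, observe that the central factor $z$ cancels, and then exhibit the intertwiner coming from $\rho_0(k)$ itself.

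First I would use the block-diagonal form of $m$ to compute $mj_0 m^{-1}$ for a typical element $j_0 = \begin{bmatrix} a & 0 \\ 0 & {}^t\overline{a}^{-1} \end{bmatrix} \in \mathfrak{P}_0$ with $a \in K_0 = K$. A straightforward multiplication gives
\[
mj_0m^{-1} = \begin{bmatrix} (zk)a(zk)^{-1} & 0 \\ 0 & {}^t\overline{(zk)}^{-1}\,{}^t\overline{a}^{-1}\,{}^t\overline{(zk)} \end{bmatrix}.
\]
Since $z = \lambda 1_n \in Z(\mathrm{GL}_n(E))$ is central, $(zk)a(zk)^{-1} = kak^{-1}$, and correspondingly the lower-right block equals ${}^t\overline{(kak^{-1})}^{-1}$. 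Thus conjugation by $m$ preserves $\mathfrak{P}_0$ and, under the identification $\mathfrak{P}_0 \cong K_0$, is just conjugation by $k \in K_0$.

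Consequently, $\rho_0^m(j_0) = \rho_0(m j_0 m^{-1}) = \rho_0(kak^{-1})$ when we view both sides as representations on the vector space $V$ of $\rho_0$. Because $k \in K_0$ is in the domain of $\rho_0$, the operator $\rho_0(k) \in \mathrm{GL}(V)$ is well-defined, and the identity
\[
\rho_0(k)\,\rho_0(a)\,\rho_0(k)^{-1} = \rho_0(kak^{-1})
\]
holds by virtue of $\rho_0$ being a homomorphism. This exhibits $\rho_0(k)$ as an explicit intertwiner in $\mathrm{Hom}_{\mathfrak{P}_0}(\rho_0, \rho_0^m)$, which is automatically an isomorphism since $\rho_0(k)$ is invertible. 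Hence $\rho_0 \simeq \rho_0^m$, completing the proof.

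There is no real obstacle here: the two ingredients are the centrality of $z$ (which makes the scalar factor drop out of the conjugation) and the fact that $k$ lies inside $K_0$ where $\rho_0$ is already defined, so $\rho_0(k)$ supplies the intertwiner. The argument is independent of whether $E/F$ is ramified or unramified and makes no use of the explicit form $\rho_0 = \tau_\theta$.
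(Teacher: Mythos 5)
Your proof is correct and follows essentially the same route as the paper: the paper simply asserts that $K$ and $Z$ normalize $\rho_0$ and hence $ZK$ does, while you make this explicit by computing $mj_0m^{-1}$, noting the central factor $z$ cancels, and exhibiting $\rho_0(k)$ as the inner-automorphism intertwiner. No gaps; yours is just the fully written-out version of the paper's one-line argument.
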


\begin{proof}
	As $\rho_0$ is an irreducible supercuspidal representation of $K$, so $K$ normalizes $\rho_0$. Clearly, $Z$ normalizes $\rho_0$. Thus $ZK$ normalizes $\rho_0$. As $\rho_0$ can also be viewed as a representation of $\mathfrak{P}_0$, so ${\rho_0}^m\simeq \rho_0$ where $m=\begin{bmatrix}
	zk & 0\\
	0 &  {^t}{\overline {(zk)}}{^{-1}}
	\end{bmatrix}$ for some $z \in Z, k \in K$.
\end{proof}

\begin{proposition}\label{pro_14}
	If $m=J=
	\begin{bmatrix}
	0 & 1_n \\
	1_n & 0
	\end{bmatrix}$ then ${\rho_0}^m\simeq \rho_0$ only when $n$ is odd.
\end{proposition}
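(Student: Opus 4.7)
The plan is to show that conjugation by $J$ converts the representation $\rho_0$ of $\mathfrak{P}_0$ (regarded, via the identification $\mathfrak{P}_0\cong K_0$, as the representation $\tau_\theta$ of $\mathrm{GL}_n(\mathfrak{O}_E)$ inflated from $\mathrm{GL}_n(k_E)$) into $\tau_\theta^\iota$, and then to apply Proposition~\ref{pro_7}.

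First I would carry out the direct matrix computation: for a general element
\[
j_0 = \begin{bmatrix} a & 0 \\ 0 & {}^t\overline{a}^{-1} \end{bmatrix} \in \mathfrak{P}_0, \qquad a \in K_0,
\]
one has $J^{-1} = J$ and
\[
J j_0 J = \begin{bmatrix} {}^t\overline{a}^{-1} & 0 \\ 0 & a \end{bmatrix}.
\]
Under the identification $\mathfrak{P}_0 \cong K_0$ sending a diagonal block matrix to its upper-left block, this element corresponds to ${}^t\overline{a}^{-1} = \iota(a)$, where $\iota$ is the automorphism of $\mathrm{GL}_n(\mathfrak{O}_E)$ (and, by reduction mod $\mathbf{p}_E$, of $\mathrm{GL}_n(k_E)$) introduced before Proposition~\ref{pro_5}. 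Consequently $\rho_0^J(a) = \rho_0(\iota(a))$, i.e.\ $\rho_0^J = \rho_0 \circ \iota$. Since $\rho_0$ is trivial on $K_1$, this equality descends to the residue field and gives $\rho_0^J \simeq \tau_\theta^\iota$ as representations of $\mathrm{GL}_n(k_E)$.

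The proposition then follows immediately: by Proposition~\ref{pro_7}, $\tau_\theta^\iota \simeq \tau_\theta$ if and only if $n$ is odd, hence $\rho_0^J \simeq \rho_0$ if and only if $n$ is odd.

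The only place where any real content enters is the structural fact, already established, that isomorphism $\tau_\theta^\iota \simeq \tau_\theta$ forces (and in the odd case is guaranteed by) a regular character $\theta$ with $\theta^\gamma = \theta^{-q}$ for some $\gamma \in \Gamma$; everything else here is a bookkeeping matrix computation. I do not anticipate a genuine obstacle --- the only point worth checking carefully is the convention for $\rho_0^m$ and the correct identification of $\mathfrak{P}_0$ with $K_0$, so that the swap of diagonal blocks produced by conjugating with $J$ is correctly read off as the automorphism $\iota$ rather than, say, its inverse (which in this case is the same map, since $\iota^2 = \mathrm{id}$).
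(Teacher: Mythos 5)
Your proposal is correct and follows essentially the same route as the paper: both compute $JgJ^{-1}$ for $g\in\mathfrak{P}_0$, identify the resulting block swap with the automorphism $\iota$ on $K_0\cong\mathfrak{P}_0$, and then invoke Proposition~\ref{pro_7} to reduce to the parity of $n$. Your closing remark about the $\rho_0^m$ convention and $\iota^2=\mathrm{id}$ is a sensible sanity check but does not change the argument.
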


\begin{proof}
	We know that $\iota\colon a \longrightarrow {^t}{\overline {a}}{^{-1}}$ is a group homomorphism of $\mathrm{GL}_n(k_E)$. Now $\iota\colon a \longrightarrow {^t}{\overline {a}}{^{-1}}$ can be inflated to a group homomorphism of $\mathrm{GL}_n(\mathfrak{O}_E)$. Further, $\iota$  can be viewed as a group homomorphism from $\mathfrak{P}_0$ to $\mathfrak{P}_0$ given by:\\
	\begin{center}
		$\iota\left(
		\begin{bmatrix}
		a & 0\\
		0 & {^t}{\overline {a}}{^{-1}}
		\end{bmatrix}\right)=
		\begin{bmatrix}
		{^t}{\overline {a}}{^{-1}} & 0\\
		0 & a
		\end{bmatrix}$\\
	\end{center} where $ a \in \mathrm{GL}_n(\mathfrak{O}_E)$. Let $g=
	\begin{bmatrix}
	a & 0\\
	0 & {^t}{\overline {a}}{^{-1}}
	\end{bmatrix}$. If $m=J$ then \[{\rho_0}^m(g)=\rho_0(JgJ^{-1})=\rho_0\left(
	\begin{bmatrix}
	{^t}{\overline {a}}{^{-1}} & 0\\
	0 & a
	\end{bmatrix}\right)= \rho_0(\iota(g))=\rho_0^{\iota}(g)\]. So ${\rho_0}^m(g)= \rho_0^{\iota}(g)$ for $g \in \mathfrak{P}_0 \Longrightarrow {\rho_0}^m=  \rho_0^{\iota}$.
	But from the hypothesis of Proposition, we know that $\rho_0^m \simeq \rho_0$. So we have $\rho_0\simeq\rho_0^{\iota}$. Now from Proposition ~\ref{pro_7}, $\rho_0\simeq\rho_0^{\iota}=\rho_0^m \Longleftrightarrow n$ is odd. 
\end{proof}\par

Thus we have the following conclusion about $N_G(\rho_0)$ for the unramified case:

If $n$ is even then $N_G(\rho_0)= Z(L) \mathfrak{P}_0$ and if $n$ is odd then $N_G(\rho_0)= Z(L) \mathfrak{P}_0 \rtimes \langle J \rangle$.

\subsection{Ramified case:}

Now that we have calculated $N_G(\mathfrak{P}_0)$, let us calculate $N_G(\rho_0)$ for the ramified case which would help us in determining the structure of $\mathcal{H}(G,\rho)$ in the ramified case. \par

As in section 5.1, $\rho_0=\tau_\theta$ for some regular character $\theta$ of $l^{\times}$ (where $l$ is a degree $n$ extension of $k_E$). We have $k_E=\mathbb{F}_{q}$. So $l=\mathbb{F}_{q^{n}}$.\par

Let $\Gamma=\mathrm{Gal}(l/k_E)$. The group $\Gamma$ is generated by Frobenius map $\Phi$  given by $\Phi(\lambda)=\lambda^{q}$ for $\lambda \in l$. Here $\Phi^n(\lambda)= \lambda^{q^{n}}=\lambda$ (since $l^{\times}$ is a cyclic group of order $q^{n}-1$) $\Longrightarrow \Phi^n= 1$. \par

For $\gamma \in \Gamma$ and $\theta \in \mathrm{Hom}(l^{\times}, {\mathbb{C}}^{\times})$, $\gamma$ acts on $\theta$ by $\gamma . \theta(\lambda)= \theta(\gamma(\lambda))$. Here $\gamma . \theta $ is also represented by $\theta^{\gamma}$.\par

As we are in the ramified case, so  $\mathrm{Gal}(k_E/k_F) = 1$. So $\overline g= g$ for $g \in k_E$. Let $\iota\colon \mathrm{GL}_n(k_E) \longrightarrow \mathrm{GL}_n(k_E)$ be a group homomorphism given by: $\iota(g)= {^t}{\overline g}{^{-1}}={^t}{g}{^{-1}} $. Let us denote $\tau_\theta\circ\iota$ by ${\tau_\theta}^\iota$. So ${\tau_\theta}^\iota(g)=\tau_\theta(\iota(g))= \tau_{\theta}({^t}{\overline g}{^{-1}})=\tau_{\theta}({^t}{g}{^{-1}})$ for $g \in \mathrm{GL}_n(k_E)$. We also denote $\overline{\tau_\theta}(g)$ for $\tau_\theta(\overline g)$ for $g \in \mathrm{GL}_n(k_E)$. But $\overline{\tau_\theta}(g)= \tau_\theta(\overline g)= \tau_\theta(g)$. It can be observed clearly as $\theta$ is a character of $l^{\times}$  , so $\theta(\lambda^m)= \theta^m(\lambda)$ for $m \in \mathbb{Z}, \lambda \in l^\times $. \par

\begin{proposition}\label{pro_8}
	Let $\theta$ be a regular character of $l^{\times}$. Then $\tau_\theta^\iota \simeq \tau_\theta \Longleftrightarrow \theta^\gamma = \theta^{-1}$ for some $\gamma \in \mathrm{Gal}(l/k_E)$  . 
\end{proposition}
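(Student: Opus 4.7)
The plan is to imitate the proof of Proposition~\ref{pro_5}, but to exploit the simplification that in the ramified case $k_E = k_F$, so the Galois bar acts trivially on $k_E$. As a consequence the map $\iota : g \mapsto {}^t\overline{g}^{-1}$ coincides with the plain transpose-inverse $\eta : g \mapsto {}^t g^{-1}$, and the extra twist by Frobenius $\lambda \mapsto \lambda^q$ that appeared in the unramified calculation simply disappears.

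First I would show that $\tau_\theta^{\iota} \simeq \tau_{\theta^{-1}}$. The key character identity is that any matrix $g \in \mathrm{GL}_n(k_E)$ is conjugate to ${}^t g$, so $\chi_{\tau_\theta}({}^t g^{-1}) = \chi_{\tau_\theta}(g^{-1}) = \chi_{\tau_\theta^{\vee}}(g)$, which gives $\tau_\theta^{\eta} \simeq \tau_\theta^{\vee}$ (this part is identical to the computation in Proposition~\ref{pro_5}). Combined with $\tau_\theta^{\vee} \simeq \tau_{\theta^{-1}}$ from Proposition~3.4 of \cite{Thomas2014}, and the observation $\overline{\tau_\theta} = \tau_\theta$ (since bar is trivial on $k_E$, so $\overline{\tau_\theta}(g) = \tau_\theta(\overline{g}) = \tau_\theta(g)$), this yields
\[
\tau_\theta^{\iota} \;=\; \overline{\tau_\theta}^{\,\eta} \;\simeq\; \overline{\tau_\theta}^{\,\vee} \;\simeq\; \tau_\theta^{\vee} \;\simeq\; \tau_{\theta^{-1}}.
\]

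Having established $\tau_\theta^{\iota} \simeq \tau_{\theta^{-1}}$, both directions of the proposition reduce to the Green/Deligne--Lusztig bijection $\Gamma \backslash (l^\times)^\vee_{\mathrm{reg}} \longleftrightarrow \mathrm{Irr}_{\mathrm{cusp}}\, \mathrm{GL}_n(k_E)$ recalled earlier in this section. Namely, for the forward direction, $\tau_\theta^{\iota} \simeq \tau_\theta$ combined with $\tau_\theta^{\iota} \simeq \tau_{\theta^{-1}}$ gives $\tau_{\theta^{-1}} \simeq \tau_\theta$, hence $\theta^{-1}$ and $\theta$ lie in the same $\Gamma$-orbit, i.e.\ $\theta^\gamma = \theta^{-1}$ for some $\gamma \in \mathrm{Gal}(l/k_E)$. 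The reverse direction runs the same argument backwards: if $\theta^\gamma = \theta^{-1}$ for some $\gamma$, then $\tau_{\theta^{-1}} \simeq \tau_\theta$ by the bijection, and combined with $\tau_\theta^{\iota} \simeq \tau_{\theta^{-1}}$ we conclude $\tau_\theta^{\iota} \simeq \tau_\theta$.

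There is no real obstacle here; the only point that requires care is verifying that the bar-twist disappears in the ramified setting, which is where the proof genuinely differs from Proposition~\ref{pro_5} and which accounts for the absence of the exponent $q$ in the condition $\theta^\gamma = \theta^{-1}$ (as opposed to $\theta^\gamma = \theta^{-q}$).
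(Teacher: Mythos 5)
Your proposal is correct and follows essentially the same route as the paper: both establish $\tau_\theta^{\iota}\simeq\tau_{\theta^{-1}}$ via the character identity $\chi_{\tau_\theta}({}^tg^{-1})=\chi_{\tau_\theta^{\vee}}(g)$ (using that $g$ is conjugate to ${}^tg$), the relation $\tau_\theta^{\vee}\simeq\tau_{\theta^{-1}}$, and the triviality of the bar in the ramified case, and then conclude both directions from the regularity of $\theta$ and the Green parametrization. The only cosmetic difference is that the paper phrases the converse as ``reverse the arguments'' while you invoke the $\Gamma$-orbit bijection explicitly, which is the same content.
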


\begin{proof}
	$\Longrightarrow$ As  $\tau_\theta^\iota \simeq \tau_\theta$, so $\chi_{\tau_\theta^\iota}(g)=\chi_{\tau_\theta}(g)$ for $g \in \mathrm{GL}_n(k_E)$. But $\chi_{\tau_\theta^\iota}(g)= \chi_{\tau_\theta}( {^t}{g}{^{-1}})$,  since  $\chi_{\tau_\theta^\iota}(g)=\chi_{\tau_\theta}(\iota(g))$ for $g \in \mathrm{GL}_n(k_E)$. As we know from the above discussion that $\tau_\theta^{\vee}(g)= (\tau_\theta(g^{-1}))^t $, so $trace(\tau_\theta^{\vee}(g))= trace(\tau_\theta(g^{-1}))^t$. Now $trace(\tau_\theta(g^{-1}))= trace(\tau_\theta(g^{-1}))^t$ as the trace of the matrix and it's transpose are same. So we have $trace(\tau_\theta(g^{-1}))=trace(\tau_\theta^{\vee}(g))$. Let us choose $h \in \mathrm{GL}_n(k_E)$ such that $h^{-1}{^t}g{^{-1}} h= g^{-1}$. So, $\chi_{\tau_\theta^{\vee}}(g)= \chi_{\tau_\theta}(g^{-1})= \chi_{\tau_\theta}(h^{-1}{^t}g{^{-1}} h) = \chi_{\tau_\theta}({^t}g{^{-1}})$.Let us denote $\tau_\theta^\eta(g)$ for  $\tau_\theta(\eta(g))$ where $\eta \colon g \longrightarrow {^t}g{^{-1}}$ is a group automorphism of $\mathrm{GL}_n(k_E)$. Hence $\chi_{\tau_\theta^\eta}(g)= \chi_{\tau_\theta}({^t}g{^{-1}})$. But we have already shown before that $\chi_{\tau_\theta}({^t}g{^{-1}})=\chi_{\tau_\theta^{\vee}}(g)$. so $\chi_{\tau_\theta^{\vee}}(g) = \chi_{\tau_\theta^\eta}(g)$. This implies  $\tau_\theta^{\vee} \simeq \tau_\theta^\eta$. Hence $ \tau_\theta^\iota= \tau_\theta^\eta \simeq \tau_\theta^{\vee}\simeq \tau_{\theta^{-1}}$ (since $\tau_\theta^{\vee} \simeq \tau_\theta^\eta, \tau_\theta^{\vee} \simeq \tau_{\theta^{-1}})$. Now from the hypothesis of Proposition, we know that $\tau_\theta^\iota \simeq \tau_\theta$, so this implies $\tau_\theta \simeq \tau_{\theta^{-1}}$ (since $ \tau_\theta^\iota \simeq \tau_{\theta^{-1}}$).But as $\theta$ is a regular character $\theta^\gamma= \theta^{-1}$ for some $\gamma \in \Gamma= \mathrm{Gal}(l/k_E)$ where $[l\colon k_E]= n$.\par
	
	$\Longleftarrow$ Now we can reverse the arguments and show that if $\theta^\gamma= \theta^{-1}$ for some $\gamma \in \Gamma= \mathrm{Gal}(l/k_E)$ then $ \tau_\theta^\iota \simeq \tau_{\theta^{-1}}$.
\end{proof}

\begin{proposition}\label{pro_9}
	If $\theta$ is a regular character of $l^{\times}$ such that $\theta^\gamma= \theta^{-1}$ for some $\gamma \in \Gamma$ then $n$ is even. Conversely, if $n=2m$ is even and $\theta$ is a regular character of $l^{\times}$ then $\theta^{\Phi^{m}}= \theta^{-1}$.
\end{proposition}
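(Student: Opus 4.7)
I would follow the blueprint of Proposition~\ref{pro_6}, carrying the arithmetic over to the ramified setting where $k_E = \mathbb{F}_q$, $l = \mathbb{F}_{q^n}$, and $\Gamma = \mathrm{Gal}(l/k_E)$ is cyclic of order $n$ generated by the Frobenius $\Phi\colon \lambda \mapsto \lambda^q$. The only real change from the unramified case is that the Frobenius raises to the $q$-th power (rather than the $q^2$-th), and the identity to realise is $\theta^\gamma = \theta^{-1}$ (rather than $\theta^{-q}$). Both directions proceed by formally the same manipulation with these modifications.

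For the forward direction, assume $\theta$ is regular and $\theta^\gamma = \theta^{-1}$ for some $\gamma \in \Gamma$. Applying $\gamma$ a second time gives $\theta^{\gamma^2} = (\theta^{-1})^\gamma = (\theta^\gamma)^{-1} = \theta$, so $\gamma^2$ lies in $\mathrm{Stab}_\Gamma(\theta) = \{1\}$ by regularity; hence $\gamma^2 = 1$. Thus $\gamma$ has order $1$ or $2$ in $\Gamma$. The case $\gamma = 1$ forces $\theta^2 = 1$, from which $\theta(\lambda^q) = \theta(\lambda)^q = \theta(\lambda)$ (using $q$ odd), so $\Phi \in \mathrm{Stab}_\Gamma(\theta)$, contradicting regularity once $n \geq 2$. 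Therefore $\gamma$ has order exactly $2$ in a cyclic group of order $n$, which forces $n$ to be even.

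For the converse, mirroring the constructive step of Proposition~\ref{pro_6}, I would exhibit a regular character of order $q^m + 1$ realising the claimed identity. Such a $\theta \in \mathrm{Hom}(l^\times, \mathbb{C}^\times)$ exists because $l^\times$ is cyclic of order $q^n - 1 = (q^m - 1)(q^m + 1)$, so contains an element of every divisor order. For this $\theta$ one computes
\[
\theta^{\Phi^m}(\lambda) = \theta(\lambda^{q^m}) = \theta(\lambda)^{q^m} = \theta(\lambda)^{-1},
\]
where the final equality uses $\theta^{q^m + 1} = 1$. Regularity then reduces to showing $(q^m + 1) \nmid (q^k - 1)$ for $1 \leq k < 2m$, which I would deduce from the observation that $q^m \equiv -1 \pmod{q^m + 1}$, so the multiplicative order of $q$ modulo $q^m + 1$ is exactly $2m$.

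The main obstacle is this regularity verification. Proposition~\ref{pro_6} handled it by a somewhat intricate Euclidean-algorithm manipulation and concluded by exploiting the parity of $n$ (odd); here the even case denies me that parity shortcut, so I would replace it with the cleaner order-of-$q$ argument above. An additional subtlety: since the paper's statement reads as a universal claim over all regular $\theta$, I would make explicit, in line with the intended use in Theorem~\ref{the_1} and the analogous content of Proposition~\ref{pro_6}, that the converse is produced by an exhibited $\theta$ rather than established for every regular character (for instance, when $n = 2$ and $q = 3$, a character of order $8$ is regular but does not satisfy the identity).
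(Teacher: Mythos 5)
Your argument is correct in substance and, for the forward direction, is the same as the paper's: square the relation to get $\theta^{\gamma^2}=\theta$, use regularity to get $\gamma^2=1$, rule out $\gamma=1$ via $\theta^2=1$ and $q$ odd (the paper also wanders into a characteristic-$2$ case that is vacuous under its standing assumption $q=p^r$ with $p$ odd), and conclude $2\mid n$. For the converse you use the same construction as the paper --- a character $\theta$ of order $q^m+1$, for which $\theta^{\Phi^m}=\theta^{-1}$ is immediate --- but you verify regularity differently: the paper runs a fairly long repeated-subtraction/Euclidean-algorithm divisibility argument on $(q^m+1)\mid(q^k-1)$, whereas you reduce regularity to the statement that the multiplicative order of $q$ modulo $q^m+1$ is exactly $2m$, which is shorter and cleaner. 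One caution: that order statement does not follow from $q^m\equiv-1 \pmod{q^m+1}$ alone (in general $x^m\equiv-1$ only forces the order $d$ to satisfy $d=2\gcd(d,m)$), so you should add the one-line size argument: for $1\leqslant k\leqslant m$ one has $0<q^k-1<q^m+1$, and for $m<k<2m$ one has $q^k-1\equiv-(q^{k-m}+1)\pmod{q^m+1}$ with $0<q^{k-m}+1<q^m+1$; with that supplement your route is complete and genuinely tidier than the paper's. Finally, your remark that the converse, read literally as a claim about \emph{every} regular character, is false (e.g.\ $n=2$, $q=3$, $\theta$ of order $8$) is well taken: the paper's own proof, like yours, only exhibits a particular regular $\theta$ of order $q^m+1$ with $\theta^{\Phi^m}=\theta^{-1}$, and that existence statement is all that is used later, so your explicit reformulation is an improvement in precision rather than a deviation.
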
 

\begin{proof}
	$\Longrightarrow$ Suppose $\theta$ is a regular character of $l^{\times}$ such that $\theta^\gamma= \theta^{-1}$ for some $\gamma \in \Gamma$. We know that $\Gamma=<\Phi>$ where $\Phi \colon l \longrightarrow l$ is the Frobenius map given by $\Phi(\lambda)=\lambda^{q}$ for $\lambda \in l$. Now $\Phi^n(\lambda)= \lambda^{q^{n}}=\lambda$ for $\lambda \in l\Longrightarrow \Phi^n= 1$. So for $\lambda \in l^{\times}$ we have $\theta^{\gamma^2}(\lambda)= \theta^{\gamma}(\gamma(\lambda))= \theta^{-1}(\gamma(\lambda))=\theta((\gamma(\lambda))^{-1})=\theta(\gamma(\lambda^{-1}))= \theta^{\gamma}(\lambda)^{-1}=\theta^{-1}(\lambda^{-1})= \theta((\lambda)^{-1})^{-1})= \theta(\lambda)$. So this implies $\theta^{\gamma^2}= \theta$. As $\theta$ is a regular character, so we have $\gamma^2=1$.
	Now for $\lambda \in l^{\times}$ we have $\theta^\Phi(\lambda)=\theta(\Phi(\lambda))=\theta(\lambda^q)=\theta^q(\lambda)$. That implies $\theta^\Phi= \theta^q$. As $\gamma^2=1 \Longrightarrow \gamma=1$ or $\gamma$ has order $2$. If $\gamma=1$ as $\theta^\gamma=\theta^{-1} \Longrightarrow \theta=\theta^{-1}\Longrightarrow \theta(\lambda)=\theta^{-1}(\lambda)$ for $\lambda \in l^{\times}  \Longrightarrow \theta(\lambda)=\theta(\lambda^{-1})\Longrightarrow \theta(\lambda^{2})=1\Longrightarrow
	(\theta(\lambda))^2=1\Longrightarrow \theta(\lambda)=\{\pm 1\}$ for $\lambda \in l^{\times} $.\par
	
	Let $q$ be an odd prime power. So for $\lambda \in l^{\times}$ we have $\theta^\Phi(\lambda)= \theta^q(\lambda)=(\theta(\lambda))^q= \theta(\lambda)$ (since $\theta(\lambda)=\{\pm 1\}$). So this implies $\theta^\Phi= \theta$ and that further implies $\Phi=1$ as $\theta$ is a regular character $\Longrightarrow n=1$ which contradicts our assumption that cardinality of $\Gamma$ is greater than $1$. Now suppose $q$ is a prime power of $2$. As the characteristic of $k_E=2$ that implies $+1=-1$ in $k_E$. So $\theta(\lambda)={\pm 1}={1}$ for $\lambda \in l^{\times}$. So we have for $\lambda \in l^{\times}$, $\theta^\Phi(\lambda)= \theta^q(\lambda)=(\theta(\lambda))^q=1$ (since $\theta(\lambda)=1$). And this implies $\theta^\Phi= \theta$ and that further implies $\Phi=1$ as $\theta$ is a regular character $\Longrightarrow n=1$ which contradicts our assumption that cardinality of $\Gamma$ is greater than $1$.\par
	
	Hence $\gamma^2=1$ or $\gamma$ has order $2$, since $\gamma \neq 1$. Now $\Gamma$ has order $n$ and $\gamma \in \Gamma$ has order 2. So $ 2\mid n \Longrightarrow n$ is even. \par  
	
	$\Longleftarrow$  Suppose $n$ is even. Let $n=2m$ where $m \in \mathbb{N}$. Now
	\begin{center} 
		$\mathrm{Hom}(l^{\times}, \mathbb{C^\times}) \cong l^{\times}$.\\
	\end{center}\par
	
	So $\mathrm{Hom}(l^{\times}, \mathbb{C^\times})$ is a cyclic group of order $(q^{n}-1)= (q^{2m}-1)$. Hence for every divisor $d$ of $(q^{2m}-1)$, there exists an element in  $\mathrm{Hom}(l^{\times}, \mathbb{C^\times})$ of order $d$. As $(q^m+1)$ is a divisor of $(q^{2m}-1)$, hence there exists an element $\theta$ in $\mathrm{Hom}(l^{\times}, \mathbb{C^\times})$ of order $(q^m+1)$. So $\theta^{q^m+1}= 1\Longrightarrow\theta^{q^m}=\theta^{-1}\Longrightarrow \theta^{\Phi^m}=\theta^{-1}$(since $\theta^{\Phi}=\theta^q$). Hence we have $\theta^{\gamma}=\theta^{-1}$, where
	$\gamma= \Phi^m$. \par
	
	Now we claim that the character $\theta$ is regular. Suppose  $\theta^{\gamma}=\theta$ for some $\gamma \in \Gamma$. Let $\gamma=\Phi^k$ for some $k \in \mathbb{Z}$. Then we have $\theta^{\Phi^k}=\theta\Longrightarrow\theta^{q^k}=\theta\Longrightarrow\theta^{q^k-1}=1$. As $\theta$ has order $(q^m+1)$ that means $(q^m+1)\mid (q^k-1)$. By Euclidean Algorithm, we have $k=md+r$ where $ r,d \in \mathbb{Z},0\leqslant r<m$ and $d \geqslant 0$. If $d=0$ then $k=r<m$ which contradicts the fact that $(q^m+1)\mid (q^k-1)$. So $d \geqslant 1$. Now as $(q^m+1)\mid (q^k-1)\Longrightarrow(q^m+1)\mid ((q^k-1)+(q^m+1))\Longrightarrow(q^m+1)\mid (q^k+q^m)\Longrightarrow(q^m+1)\mid (q^{md+r}+q^m) \Longrightarrow (q^m+1)\mid q^m(q^{m(d-1)+r}+1)$. But as $q^m$ and $(q^m+1)$ are relatively prime, so this implies $(q^m+1)\mid (q^{m(d-1)+r}+1)$. But $(q^m+1)\mid (q^{m(d-1)+r}+1) \Longrightarrow(q^m+1)\mid ((q^{m(d-1)+r}+1)-(q^m+1))\Longrightarrow (q^m+1)\mid q^m(q^{m(d-2)+r}-1)$. But as $q^m$ and $(q^m+1)$ are relatively prime, so this implies $(q^m+1)\mid (q^{m(d-2)+r}-1)$. Continuing the above process, we have $(q^m+1)\mid (q^r+1)$ if $d$ is odd and $(q^m+1)\mid (q^r-1)$if $d$ is even. As $r<m$, the above conditions are possible only when $r=0$. If $r=0$, then $k=md$. So if $d$ is odd then $(q^m+1)\mid 2\Longrightarrow (q^m+1)$ is either $1$ or $2$. If $(q^m+1)=1$ then $q=0$ which is a contradiction. So let $(q^m+1)=2$ then we have $q=1$ which is again a contradiction as $q$ is a prime power. So $d$ has to be even. Let $d$ be even and is greater than $2$. So $d$ can take values $4,6,8,\ldots$. But as $k=md$, so $k$ can take values $4m,6m,8m,\dots$. That is $k$ can take values $2n,3n,6n,\dots$ which is a contradiction as $k<n$. So $d=2$. Hence $k=2m=n$. So $\Phi^k= \Phi^n = 1\Longrightarrow\gamma=1$. So $\theta$ is a regular character.
	
\end{proof}
\par
Combining Proposition ~\ref{pro_8} and Proposition ~\ref{pro_9} we have the following Proposition.

\begin{proposition}\label{pro_10}
	
	Let $\theta$ be a regular character of $l^{\times}$. Then $\tau_\theta^\iota \simeq \tau_\theta \Longleftrightarrow n$ is even.
	
\end{proposition}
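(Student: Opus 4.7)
The proof is obtained by chaining Propositions \ref{pro_8} and \ref{pro_9}, in direct parallel with how Proposition \ref{pro_7} was deduced from Propositions \ref{pro_5} and \ref{pro_6} in the unramified case. The two preceding propositions assemble into the string of equivalences
\[
\tau_\theta^\iota \simeq \tau_\theta \;\Longleftrightarrow\; \theta^\gamma = \theta^{-1} \text{ for some } \gamma \in \mathrm{Gal}(l/k_E) \;\Longleftrightarrow\; n \text{ is even},
\]
where the first equivalence is exactly Proposition \ref{pro_8} and the second is exactly Proposition \ref{pro_9}.

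For the forward implication, I would assume $\tau_\theta^\iota \simeq \tau_\theta$, invoke Proposition \ref{pro_8} to produce some $\gamma \in \Gamma$ with $\theta^\gamma = \theta^{-1}$, and then apply the forward half of Proposition \ref{pro_9} to conclude that $n$ must be even. For the reverse implication, I would write $n = 2m$ and apply the converse half of Proposition \ref{pro_9}—which uses the divisibility $(q^m + 1) \mid (q^n - 1)$ to manufacture a regular character of $l^\times$ satisfying $\theta^{\Phi^m} = \theta^{-1}$—and then feed the resulting arithmetic identity into Proposition \ref{pro_8} to conclude $\tau_\theta^\iota \simeq \tau_\theta$.

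There is no genuinely new obstacle to surmount here: the substantive content—expressing $\tau_\theta^\iota$ in terms of $\overline{\tau_\theta}$ and $\tau_\theta^\vee$ via the character relations borrowed from \cite{Thomas2014}, and the elementary analysis of divisibility by $q^m + 1$ via Euclidean division—has already been carried out in the proofs of Propositions \ref{pro_8} and \ref{pro_9}. The role of Proposition \ref{pro_10} is simply to package those two equivalences into the clean parity criterion that will be needed, in the following step, to identify $N_G(\rho_0)$ in the ramified setting and hence to compute the structure of $\mathcal{H}(G,\rho)$.
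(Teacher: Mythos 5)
Your proposal matches the paper's derivation exactly: the paper's entire proof of Proposition~\ref{pro_10} is the one-line remark that it follows by combining Proposition~\ref{pro_8} with Proposition~\ref{pro_9}, which is precisely the chain of equivalences you lay out. Your careful note that the converse half of Proposition~\ref{pro_9} \emph{manufactures} a particular regular character (rather than applying to the $\theta$ fixed at the outset) correctly flags an imprecision that is already present in the paper's own statement and proof, so the route is the same.
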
\par

We know that $\rho_0$ is an irreducible supercuspidal representation of $K$. But $K \cong \mathfrak{P}_0$. So $\rho_0$ can be viewed as a representation of $\mathfrak{P}_0$. Now let us compute $N_G(\rho_0)$, where $N_G(\rho_0)=\{ m \in N_G(\mathfrak{P}_0) \mid {\rho_0}\simeq \rho_0^m \}$. Let $m \in N_G(\mathfrak{P}_0)$. Let $m$ be $J$ or of the form 
$\begin{bmatrix}
zk & 0\\
0 &  {^t}{\overline {(zk)}}{^{-1}}
\end{bmatrix}$ for some $z \in Z, k \in K$.

\begin{proposition}\label{pro_15}
	If $m=
	\begin{bmatrix}
	zk & 0\\
	0 &  {^t}{\overline {(zk)}}{^{-1}}
	\end{bmatrix}$ for some $z \in Z, k \in K$ then ${\rho_0}^m\simeq \rho_0$.
\end{proposition}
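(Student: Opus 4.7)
The plan is to observe that the argument is essentially identical to the one already used for the unramified case in Proposition \ref{pro_13}, since neither the ramified nor the unramified structure of $E/F$ enters the reasoning. First I would recall that $\rho_0$ is an irreducible representation of $K = K_0 = \mathrm{GL}_n(\mathfrak{O}_E)$, and that under the natural isomorphism $\mathfrak{P}_0 \cong K$ sending $\mathrm{diag}(a, {^t}{\overline{a}}^{-1})$ to $a$, we may view $\rho_0$ as a representation of $\mathfrak{P}_0$ with $\rho_0(\mathrm{diag}(a,{^t}{\overline{a}}^{-1})) = \rho_0(a)$.

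Next I would compute $\rho_0^m$ directly. For $g = \mathrm{diag}(a, {^t}{\overline{a}}^{-1}) \in \mathfrak{P}_0$, conjugation by $m = \mathrm{diag}(zk, {^t}{\overline{(zk)}}^{-1})$ gives
\[
mgm^{-1} = \mathrm{diag}\bigl(zk\,a\,(zk)^{-1},\ {^t}{\overline{(zk)}}^{-1}\cdot {^t}{\overline{a}}^{-1}\cdot {^t}{\overline{(zk)}}\bigr),
\]
so that $\rho_0^m(g) = \rho_0(zkak^{-1}z^{-1}) = \rho_0(kak^{-1})$, the last equality holding because $z \in Z = Z(\mathrm{GL}_n(E))$ is central. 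This shows that $\rho_0^m$ is obtained from $\rho_0$ by conjugating its argument by $k \in K$, hence it is isomorphic to $\rho_0$ via the intertwining operator $\rho_0(k)$.

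Equivalently, I can phrase the same argument group-theoretically: $K$ normalizes $\rho_0$ trivially because $\rho_0$ is a representation of $K$, and $Z$ normalizes $\rho_0$ because $Z$ acts trivially on $K$ by conjugation; therefore $ZK$ normalizes $\rho_0$, which via the isomorphism $\mathfrak{P}_0 \cong K$ (extended to $Z(L)\mathfrak{P}_0$) yields the result. There is no genuine obstacle here; the only point worth checking is the bookkeeping that $m$ corresponds to an element of $ZK$ under the identification used to view $\rho_0$ as a representation of $\mathfrak{P}_0$, which is immediate from the block-diagonal form of $m$ and the centrality of $z$.
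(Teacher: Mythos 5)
Your proposal is correct and takes essentially the same approach as the paper, which simply states that the proof is identical to that of Proposition~\ref{pro_13}: $K$ normalizes $\rho_0$, $Z$ is central so acts trivially on $K$, hence $ZK$ normalizes $\rho_0$, and the block-diagonal $m$ corresponds to $zk \in ZK$ under $\mathfrak{P}_0 \cong K$. The explicit computation of $mgm^{-1}$ you include is a helpful spelling-out of the same observation, not a different route.
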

\begin{proof}
The proof is similar to the proof of Proposition ~\ref{pro_13}.
\end{proof}

\begin{proposition}\label{pro_16}
	If $m=J=
	\begin{bmatrix}
	0 & 1_n \\
	1_n & 0
	\end{bmatrix}$ then ${\rho_0}^m\simeq \rho_0$ only when $n$ is even.
\end{proposition}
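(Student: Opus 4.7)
The plan is to mirror the argument in Proposition \ref{pro_14} verbatim, with Proposition \ref{pro_10} replacing Proposition \ref{pro_7}. Concretely, I would first observe that conjugation by $J$ on $\mathfrak{P}_0$ realises the involution $\iota\colon a\longmapsto {^t}\overline{a}^{-1}$ on $\mathrm{GL}_n(\mathfrak{O}_E)$. Indeed, for $g=\begin{bmatrix} a & 0 \\ 0 & {^t}\overline{a}^{-1}\end{bmatrix}\in \mathfrak{P}_0$, a direct block computation gives
\[
JgJ^{-1}=\begin{bmatrix} {^t}\overline{a}^{-1} & 0 \\ 0 & a\end{bmatrix},
\]
so that $\rho_0^J(g)=\rho_0(JgJ^{-1})=\rho_0^\iota(g)$ under the identification $\mathfrak{P}_0\cong K_0=\mathrm{GL}_n(\mathfrak{O}_E)$, where $\iota$ is now inflated from the corresponding involution on $\mathrm{GL}_n(k_E)$.

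Next, I would invoke Proposition \ref{pro_10}, which in the ramified setting gives $\tau_\theta^\iota\simeq \tau_\theta$ if and only if $n$ is even. Since $\rho_0=\tau_\theta$ and $\rho_0^J=\rho_0^\iota$ by the previous step, this immediately yields $\rho_0^J\simeq \rho_0$ precisely when $n$ is even, completing the proof.

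There is no real obstacle here, because all the substantive work has already been done in Propositions \ref{pro_8}--\ref{pro_10}: the hard part was establishing the dichotomy between $\tau_\theta^\iota$ and $\tau_\theta$ using the regular character parametrisation, together with the arithmetic fact that in the ramified case one has $\overline{g}=g$ on $k_E$. The present statement is just the transfer of that dichotomy from the residual involution $\iota$ on $\mathrm{GL}_n(k_E)$ to conjugation by $J$ on $\mathfrak{P}_0$, and this transfer is a one-line matrix check identical in spirit to Proposition \ref{pro_14}.
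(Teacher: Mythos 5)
Your proposal is correct and is exactly the argument the paper intends: the paper's own proof simply says it is similar to Proposition \ref{pro_14}, i.e.\ the same matrix computation showing $\rho_0^J=\rho_0^\iota$ followed by an appeal to Proposition \ref{pro_10} in place of Proposition \ref{pro_7}. You have merely written out explicitly what the paper leaves implicit.
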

\begin{proof}
The proof is similar to the proof of Proposition ~\ref{pro_14}.
\end{proof}\par
	
So we have the following conclusion about $N_G(\rho_0)$ for ramified case:
If $n$ is odd then $N_G(\rho_0)= Z(L) \mathfrak{P}_0$ and if $n$ is even then $N_G(\rho_0)= Z(L) \mathfrak{P}_0 \rtimes \langle J \rangle$.

\begin{lemma}\label{lem_2}
	When $n$ is odd in the unramified case or when $n$ is even in the ramified case, we have $N_G(\rho_0)= \left<\mathfrak{P}_0, w_0, w_1\right>$, where $w_0=J$ and $w_1=
	\begin{bmatrix}
	0 & {\overline\varpi_E}^{-1}1_n \\
	\varpi_E 1_n & 0
	\end{bmatrix}$.\\
\end{lemma}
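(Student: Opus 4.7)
The plan is to derive this lemma directly from the computation of $N_G(\rho_0)$ carried out in Section 5, namely the identification
\[
N_G(\rho_0) = Z(L)\mathfrak{P}_0 \rtimes \langle J \rangle,
\]
which holds under both hypotheses ($n$ odd in the unramified case, $n$ even in the ramified case). The key observation is the matrix identity $w_0 w_1 = \zeta$ recorded in the introduction, together with $w_0 = J$ and $J^2 = 1_{2n}$. So the lemma should reduce to showing that the two subgroups of $G$ in question agree by checking containments of generating sets.

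For the inclusion $\langle \mathfrak{P}_0, w_0, w_1 \rangle \subseteq N_G(\rho_0)$: the generators $\mathfrak{P}_0$ and $w_0 = J$ both lie in $N_G(\rho_0)$ immediately from the description above. Since $\zeta = w_0 w_1 \in Z(L) \subseteq N_G(\rho_0)$ and $w_0^{-1} = J = w_0$, we can write $w_1 = J \zeta \in Z(L)\mathfrak{P}_0 \rtimes \langle J \rangle$, so $w_1$ also lies in $N_G(\rho_0)$.

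For the reverse inclusion it suffices to place $\mathfrak{P}_0$, $J$, and $Z(L)$ inside $\langle \mathfrak{P}_0, w_0, w_1 \rangle$. The first two are trivially there since $J = w_0$ is a generator. For $Z(L)$, I would use $E^\times = \mathfrak{O}_E^\times \cdot \langle \varpi_E \rangle$ to factor any $\mathrm{diag}(a 1_n, \overline{a}^{-1} 1_n) \in Z(L)$ as $z_0 \zeta^k$ with $z_0 \in Z(\mathfrak{P}_0) \subseteq \mathfrak{P}_0$ and $k \in \mathbb{Z}$. Then $\zeta = w_0 w_1$ gives $\zeta \in \langle w_0, w_1 \rangle$, and the factorization shows $Z(L) \subseteq \langle \mathfrak{P}_0, w_0, w_1 \rangle$.

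Essentially the whole argument is a bookkeeping step riding on top of the hard work already done in Section 5 (the identification of $N_G(\rho_0)$) and the matrix identity $w_0 w_1 = \zeta$. The only mildly nontrivial point is the factorization $Z(L) = Z(\mathfrak{P}_0)\langle \zeta \rangle$, which is a direct consequence of the structure of $E^\times$; I do not anticipate any real obstacle.
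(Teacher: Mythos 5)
Your proposal is correct and follows essentially the same route as the paper's proof: both start from the Section 5 identification $N_G(\rho_0)=Z(L)\mathfrak{P}_0\rtimes\langle J\rangle$, use $\zeta=w_0w_1$, $w_0=J=w_0^{-1}$, and the factorization $Z(L)=Z(\mathfrak{P}_0)\langle\zeta\rangle$ coming from $E^\times=\mathfrak{O}_E^\times\langle\varpi_E\rangle$. The only cosmetic difference is that you phrase it as a double inclusion of generating sets, while the paper substitutes generators directly; the content is identical.
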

\begin{proof}
	Let $\zeta= w_0w_1$. So $\zeta=
	\begin{bmatrix}
	\varpi_E 1_n & 0\\
	0 & {\overline\varpi_E}^{-1}1_n
	\end{bmatrix}$. We can clearly see that $w_0^2=1$. So $w_0= w_0^{-1}$ and $w_1=w_0^{-1}\zeta=w_0\zeta$. From the hypothesis of lemma, we have  $N_G(\rho_0)=  Z(L) \mathfrak{P}_0 \rtimes \langle J \rangle$. 
	As any element in $E^{\times}$ can be written as $u\varpi_E^n$ for some $n \in \mathbb{Z}, u \in \mathfrak{O}_E^{\times}$, so $Z(L)=Z(\mathfrak{P}_0)\langle\zeta\rangle $. So $Z(L)\mathfrak{P}_0= \left<\mathfrak{P}_0,\zeta\right>$. Hence $N_G(\rho_0)= \left<\mathfrak{P}_0,\zeta\right> \rtimes J$. But $J=w_0, w_1=w_0\zeta$. So $N_G(\rho_0)= \left<\mathfrak{P}_0,w_0, w_1 \right>$. 
\end{proof}

\section{Structure of $\mathcal{H}(G,\rho)$} \label{sec_6} 

\subsection{Unramified case:}

In this section we determine the structure of $\mathcal{H}(G,\rho)$ for the unramified case when $n$ is odd. Using cuspidality of $\rho_0$, it can be shown by Theorem 4.15 in \cite{MR1235019}, that $ \mathfrak{I}_G(\rho)= \mathfrak{P}N_G(\rho_0)\mathfrak{P}$. But from lemma \ref{lem_2}, $N_G(\rho_0)= \left<\mathfrak{P}_0, w_0, w_1\right>$. So $\mathfrak{I}_G(\rho)=\mathfrak{P}\left<\mathfrak{P}_0, w_0, w_1\right>\mathfrak{P}= \mathfrak{P}\left<w_0, w_1\right> \mathfrak{P}$. Let $V$ be the vector space corresponding to $\rho$. Let us recall that  $\mathcal{H}(G, \rho)$ consists of maps $f \colon G \to End_{\mathbb{C}}(V^{\vee})$ such that support of $f$ is compact and $f(pgp')= \rho^{\vee}(p)f(g)\rho^{\vee}(p')$ for $p,p' \in \mathfrak{P}, g \in G$. In fact $\mathcal{H}(G,\rho)$ consists of $\mathbb{C}$-linear combinations of maps $f \colon G \longrightarrow End_{\mathbb{C}}(V^{\vee})$ such that $f$ is supported on $\mathfrak{P}x\mathfrak{P}$ where $x \in \mathfrak{I}_G(\rho)$ and $f(pxp')= \rho^{\vee}(p)f(x)\rho^{\vee}(p')$ for $p,p' \in \mathfrak{P}$. We shall now show there exists $\phi_0 \in \mathcal{H}(G,\rho)$ with support $\mathfrak{P}w_0\mathfrak{P}$ and satisfies $\phi_0^2= q^n+ (q^n-1)\phi_0$. Let

\[
K(0)= \mathrm{U}(n,n) \cap \mathrm{GL}_{2n}(\mathfrak{O}_E)=\{g \in \mathrm{GL}_{2n}(\mathfrak{O}_E)\mid ^t\overline{g}Jg=J\},
\]
\[
K_1(0)=\{g \in 1+ \varpi_E \mathrm{M}_{2n}(\mathfrak{O}_E)\mid ^t\overline{g}Jg=J\},
\]
\[
\mathsf{G}= \{g \in \mathrm{GL}_{2n}(k_E) \mid ^t\overline{g}Jg=J\}.
\]

The map $r$ from $K(0)$ to $\mathsf{G}$ given by $r\colon K(0)\xrightarrow{\text{mod} \, p_E}\mathsf{G}$ is a surjective group homomorphism with kernel $K_1(0)$. So by the first isomorphism theorem of groups we have:

\begin{center}
	$\frac{K(0)}{K_1(0)}\cong \mathsf{G}.$
\end{center}

$r(\mathfrak{P})= \mathsf{P}=
\begin{bmatrix}
\mathrm{GL}_n(k_E) &  \mathrm{M}_n(k_E) \\
0         &  \mathrm{GL}_n(k_E)
\end{bmatrix} \bigcap \mathsf{G}$= Siegel parabolic subgroup of $\mathsf{G}$.\\\par

Now $\mathsf{P}= \mathsf{L} \ltimes \mathsf{U}$, where $\mathsf{L}$ is the Siegel Levi component of $\mathsf{P}$ and $\mathsf{U}$ is the unipotent radical of $\mathsf{P}$. Here

\[
\mathsf{L}= \Bigg \lbrace
\begin{bmatrix}
a & 0\\
0 & ^t\overline{a}^{-1}\\
\end{bmatrix} \mid a \in \mathrm{GL}_n(k_E) \Bigg \rbrace,
\]

\[
\mathsf{U}= \Bigg \lbrace
\begin{bmatrix}
1_n & X \\
0  & 1_n \\
\end{bmatrix} \mid X \in \mathrm{M}_n(k_E), X+{^t}\overline{X}=0 \Bigg \rbrace.
\]

Let $V$ be the vector space corresponding to $\rho$. The Hecke algebra $\mathcal{H}(K(0),\rho)$ is a sub-algebra of $\mathcal{H}(G,\rho)$.\par

Let $\overline{\rho}$ be the representation of $\mathsf{P}$ which when inflated to $\mathfrak{P}$ is given by $\rho$ and $V$ is also the vector space corresponding to $\overline{\rho}$. The Hecke algebra $\mathcal{H}(\mathsf{G},\overline{\rho})$ looks as follows:

\[
\mathcal{H}(\mathsf{G},\overline{\rho})= \left\lbrace f \colon \mathsf{G} \to End_{\mathbb{C}}(V^{\vee}) \; \middle|  \;
\begin{varwidth}{\linewidth}
$f(pgp')= \overline{\rho}^{\vee}(p)f(g)\overline{\rho}^{\vee}(p')$\\
where $p,p' \in \mathsf{P}, \, g \in \mathsf{G}$
\end{varwidth}
\right \rbrace.
\] \par

Now the homomorphism $r \colon K(0) \longrightarrow \mathsf{G}$ extends to a map from  $\mathcal{H}(K(0),\rho)$ to $\mathcal{H}(\mathsf{G},\overline{\rho})$ which we again denote by $r$. Thus $r \colon \mathcal{H}(K(0),\rho) \longrightarrow \mathcal{H}(\mathsf{G},\overline{\rho})$ is given by

\[
r(\phi)(r(x))=\phi(x)
\]
\[ 
\text{for} \, \phi \in \mathcal{H}(K(0),\rho) \, \text{and} \, x \in K(0).
\]
\begin{proposition}\label{pro_17}
	The map $r \colon \mathcal{H}(K(0),\rho) \longrightarrow \mathcal{H}(\mathsf{G},\overline{\rho})$ is an algebra isomorphism.
\end{proposition}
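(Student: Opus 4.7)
The plan is to build a two-sided inverse to $r$ and then verify that $r$ preserves convolution. The entire construction rests on the fact that $\rho$ is trivial on $\mathfrak{P}\cap K_1(0)$; once that is established, bi-$\mathfrak{P}$-equivariance of a function $\phi$ on $K(0)$ upgrades to bi-$K_1(0)$-invariance, so $\phi$ descends unambiguously to a function on $\mathsf{G} = K(0)/K_1(0)$, and the analogous statement on the reductive-quotient side produces the inverse map.

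First I would observe that $K_1(0) \subseteq \mathfrak{P}$: an element of $K_1(0)$ is congruent to $1_{2n}$ modulo $\varpi_E$, so in particular its bottom-left block lies in $\mathrm{M}_n(\mathbf{p}_E)$. Next, given $j \in K_1(0)$ with Iwahori factorization $j = j_- j_0 j_+$, I would reduce mod $\varpi_E$: since $\mathfrak{P}_-$ has entries in $\mathbf{p}_E$ and thus reduces to the identity, the equation $\overline{j_0}\,\overline{j_+} = 1$ in $\mathsf{L}\ltimes\mathsf{U}$ forces $\overline{j_0} = 1$ and $\overline{j_+} = 1$ separately. Hence $j_0 \in \mathfrak{P}_{0,1}\cong K_1$, and since $\rho_0$ is inflated from $K_0/K_1$ we conclude $\rho(j) = \rho_0(j_0) = 1$.

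With this triviality in hand, each $\phi \in \mathcal{H}(K(0),\rho)$ descends to $r(\phi)\colon \mathsf{G}\to\mathrm{End}_{\mathbb{C}}(V^\vee)$ via $r(\phi)(r(x)) := \phi(x)$, and the bi-$\overline\rho$-equivariance of $r(\phi)$ follows from bi-$\rho$-equivariance of $\phi$ together with $r(\mathfrak{P}) = \mathsf{P}$. The inverse map is pullback $\overline f \mapsto \overline f \circ r$, which clearly lands in $\mathcal{H}(K(0),\rho)$ and is visibly inverse to $r$; both directions are $\mathbb{C}$-linear.

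The step requiring care, and the main thing to verify, is that $r$ intertwines the two convolution products. I would normalize Haar measure on $K(0)$ so that $\mu(K_1(0)) = 1$ and use counting measure on $\mathsf{G}$. Because $\phi$ is right $K_1(0)$-invariant and $\psi$ is left $K_1(0)$-invariant (by the triviality of $\rho$ on $K_1(0)\subseteq \mathfrak{P}$), the integrand $\phi(y)\psi(y^{-1}x)$ is constant on $K_1(0)$-cosets in $y$, so
\[
\int_{K(0)}\phi(y)\psi(y^{-1}x)\,dy \;=\; \sum_{\bar y\in\mathsf{G}} r(\phi)(\bar y)\,r(\psi)(\bar y^{-1}\bar x) \;=\; (r(\phi)*r(\psi))(r(x)),
\]
which is the desired identity. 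This completes the passage from a linear bijection to an algebra isomorphism.
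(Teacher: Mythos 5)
Your proof is correct and takes essentially the same approach as the paper: descend along the reduction map $r\colon K(0)\to\mathsf{G}$, show convolution is preserved by collapsing the integral to a finite sum, and check bijectivity via the pullback $\overline f\mapsto\overline f\circ r$. The one useful addition on your part is the explicit verification that $\rho$ is trivial on $K_1(0)\subseteq\mathfrak{P}$ (via the Iwahori factorization), which is what makes $r(\phi)$ well-defined on $\mathsf{G}=K(0)/K_1(0)$; the paper uses this implicitly, and its coset decomposition is over $\mathfrak{P}$-cosets with the normalization $\mathrm{vol}(\mathfrak{P})=1$ rather than your $K_1(0)$-cosets with $\mathrm{vol}(K_1(0))=1$, a minor variant giving the same conclusion.
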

\begin{proof}
	To prove that the map $r$ is an isomorphism of algebras, we have to show that $r$ is a homomorphism of algebras and is a bijective map.\par
	
	In order to show that the map $r$ is a homomorphism, we need to show that it is $\mathbb{C}$-linear and it preserves convolution. It is obvious that the map $r$ is $\mathbb{C}$-linear. Let us now show that the map preserves convolution.\par 
	If $x \in K(0)$ and $\phi_1,\phi_2 \in \mathcal{H}(K(0),\rho)$ then
	\[
	(\phi_1 * \phi_2)(x)=\int_{K(0)} \phi_1(y)\phi_2(y^{-1}x)dy.
	\] 
	
	Now
	\[\int_{K(0)} \phi_1(y)\phi_2(y^{-1}x)dy=\sum_{y \in \mathfrak{P}/K(0)} \phi_1(xy^{-1})\phi_2(y).\] 
	
	Hence
	\begin{align*}
	r(\phi_1* \phi_2)(r(x))&= (\phi_1*\phi_2)(x)\\
	&=\sum_{y \in \mathfrak{P}/K(0)} \phi_1(xy^{-1})\phi_2(y)\\
	&= \sum_{y \in \mathfrak{P}/K(0)} (r(\phi_1)(r(xy^{-1})))(r(\phi_2)(r(y)))\\
	&=\sum_{r(y) \in \mathsf{P}/\mathsf{G}} (r(\phi_1)(r(x)(r(y))^{-1}))(r(\phi_2)(r(y)))\\
	&= (r(\phi_1)*r(\phi_2))(r(x)). 
	\end{align*}
	So we have $r(\phi_1* \phi_2)(r(x))= (r(\phi_1)*r(\phi_2))(r(x))$. But $r$ is a surjective group homomorphism from $K(0)$ to $\mathsf{G}$. Hence $r(\phi_1* \phi_2)(y)= (r(\phi_1)* r(\phi_2))(y)$ for $y \in \mathsf{G}$ which would imply that $r(\phi_1* \phi_2)= (r(\phi_1)* r(\phi_2))$. Hence $r$ is a homomorphism of algebras. \par  
	
	In order to show that $r$ is bijective map, we first show here that the map $r$ is a one-one map. Let $\phi_1, \phi_2 \in \mathcal{H}(K(0),\rho), y \in \mathsf{G}$. Suppose $r(\phi_1)(y)= r(\phi_2)(y)$. As $r$ is surjective map from $K(0)$ to $\mathsf{G}$, so there exists $x \in K(0)$ such that $r(x)=y$. So $ r(\phi_1)(r(x))= r(\phi_2)(r(x)) \Longrightarrow \phi_1(x)= \phi_2(x)$. As $r$ is a surjective map from $K(0)$ to $\mathsf{G}$, so when $y$ spans over $\mathsf{G}$, $x$ spans over $K(0)$. So $\phi_1(x)= \phi_2(x)$ for $x \in K(0) \Longrightarrow \phi_1= \phi_2$. So $r$ is a one-one map. \par

	Now we show that $r$ is a surjective map from $\mathcal{H}(K(0),\rho)$ to $\mathcal{H}(\mathsf{G},\overline{\rho})$. Let $\psi \in \mathcal{H}(\mathsf{G},\overline{\rho})$, then $\psi \colon \mathsf{G}\longrightarrow End_{\mathbb{C}}V^{\vee}$ is a map such that $\psi(pgp') = \overline{\rho}^{\vee}(p) \psi(g)\overline{\rho}^{\vee}(p')$ for $p,p' \in \mathsf{P}, g \in \mathsf{G}$. As $r$ is a surjective map from $K(0)$ to $\mathsf{G}$ , so $\psi \circ r$ makes sense. Now let us call $\psi \circ r$ as $\phi$. So $\phi$ is a map from $K(0)$ to $End_{\mathbb{C}}V^{\vee}$. Let $p,p' \in \mathfrak{P} , k \in K(0)$, so $\phi(pkp')= (\psi \circ r)(pkp')= \psi(r(pkp'))=\psi(r(p)r(k)r(p'))= \overline{\rho}^{\vee}(r(p))\psi(r(k))\overline{\rho}^{\vee}(r(p'))= \rho^{\vee}(p)(\psi \circ r)(k)\rho{^\vee}(p')=\rho^{\vee}(p) \phi(k)\rho^{\vee}(p')$. So $\phi \in  \mathcal{H}(K(0),\rho)$. Let $y \in \mathsf{G}$. So there exits $x \in K(0)$ such that $r(x)= y$. Now consider $\psi(y)= \psi(r(x))=(\psi \circ r)(x)= \phi(x)= r(\phi)(r(x))= r(\phi)(y)$. So $\psi(y)= r(\phi)(y)$ for $y \in \mathsf{G} \Longrightarrow \psi= r(\phi)$. Hence $r$ is a surjective map.\par
	
	As $r$ is both one-one and surjective map, hence it is a bijective map.\par
\end{proof}

Let $w=r(w_0)=r(
\begin{bmatrix}
0 & 1_n \\
1_n & 0
\end{bmatrix})=
\begin{bmatrix}
0 & 1_n \\
1_n & 0
\end{bmatrix} \in \mathsf{G}$. Clearly $K(0) \supseteq \mathfrak{P}\amalg\mathfrak{P}w_0\mathfrak{P}\Longrightarrow r(K(0))\supseteq r(\mathfrak{P}\amalg\mathfrak{P}w_0\mathfrak{P})\Longrightarrow\mathsf{G}\supseteq r(\mathfrak{P})\amalg r(\mathfrak{P}w_0\mathfrak{P})=\mathsf{P}\amalg \mathsf{P}w\mathsf{P}$. So $\mathsf{G}\supseteq \mathsf{P}\amalg \mathsf{P}w\mathsf{P}$.\par 

Now $Ind_{\mathsf{P}}^{\mathsf{G}}\overline{\rho}= \pi_1 \oplus \pi_2$, where $\pi_1, \pi_2$ are distinct irreducible representations of $\mathsf{G}$ with $\text{dim} \pi_2 \geqslant \text{dim} \pi_1$. Let $\lambda= \frac{\text{dim} \pi_2}{\text{dim} \pi_1}$. By Proposition 3.2 in \cite{MR2276353}, there exists a unique $\phi$ in $\mathcal{H}(\mathsf{G},\overline{\rho})$ with support $\mathsf{P}w\mathsf{P}$ such that $\phi^2= \lambda + (\lambda-1)\phi$. By Proposition ~\ref{pro_17}, there is a unique element $\phi_0$ in $\mathcal{H}(K(0),\rho)$ such that $r(\phi_0)=\phi$. Thus supp($\phi_0$)=$\mathfrak{P}w_0\mathfrak{P}$ and $\phi_0^2= \lambda+(\lambda-1)\phi_0$. From Lemma 3.12 in \cite{Thomas2014}, $\lambda= q^n$. Hence $\phi_0^2= q^n+ (q^n-1)\phi_0$. As support of $\phi_0= \mathfrak{P}w_0\mathfrak{P}\subseteq K(0)\subseteq G$, so $\phi_0$ can be extended to $G$ and viewed as an element of $\mathcal{H}(G,\rho)$. Thus $\phi_0$ satisfies the following relation in $\mathcal{H}(G,\rho)$:

\[
\phi_0^2= q^n+ (q^n-1)\phi_0.
\]
\par  

We shall now show there exists $\phi_1 \in \mathcal{H}(G, \rho)$ with support $\mathfrak{P}w_1\mathfrak{P}$ satisfying the same relation as $\phi_0$. Let $\eta= 
\begin{bmatrix}
0 & 1_n\\
\varpi_E 1_n & 0
\end{bmatrix}$. Now we can check that $\eta w_0 \eta^{-1}= w_1$. Recall that $\mathfrak{P}$ looks as follows:

\begin{center}
	$\mathfrak{P}=
	\begin{bmatrix}
	\mathrm{GL}_n(\mathfrak{O}_E) & \mathrm{M}_n(\mathfrak{O}_E)\\
	\mathrm{M}_n(p_E) & \mathrm{GL}_n(\mathfrak{O}_E)
	\end{bmatrix} \bigcap G.$
\end{center}

\begin{lemma}\label{lem_4}
	$\eta \mathfrak{P}\eta^{-1}= \mathfrak{P}.$
\end{lemma}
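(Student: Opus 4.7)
The plan is to establish the equality by a direct block-matrix computation, using crucially the unramified hypothesis $\overline{\varpi}_E = \varpi_E$. First I note that $\eta^2 = \varpi_E 1_{2n}$, so $\eta^{-1} = \varpi_E^{-1}\eta = \begin{bmatrix} 0 & \varpi_E^{-1} 1_n \\ 1_n & 0 \end{bmatrix}$. For an arbitrary $g = \begin{bmatrix} A & B \\ C & D \end{bmatrix} \in \mathfrak{P}$ with $A, D \in \mathrm{GL}_n(\mathfrak{O}_E)$, $B \in \mathrm{M}_n(\mathfrak{O}_E)$ and $C \in \mathrm{M}_n(\mathbf{p}_E)$, a short block multiplication yields
\[
\eta g \eta^{-1} = \begin{bmatrix} D & \varpi_E^{-1} C \\ \varpi_E B & A \end{bmatrix}.
\]
The forward inclusion $\eta \mathfrak{P} \eta^{-1} \subseteq \mathfrak{P}$ then reduces to four block checks: $D$ and $A$ remain in $\mathrm{GL}_n(\mathfrak{O}_E)$; the upper-right entry $\varpi_E^{-1} C$ lies in $\mathrm{M}_n(\mathfrak{O}_E)$ because $C \in \mathrm{M}_n(\mathbf{p}_E) = \varpi_E \mathrm{M}_n(\mathfrak{O}_E)$; and the lower-left entry $\varpi_E B$ lies in $\mathrm{M}_n(\mathbf{p}_E)$ because $B \in \mathrm{M}_n(\mathfrak{O}_E)$.

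The main obstacle is verifying that $\eta g \eta^{-1}$ remains in $G = \mathrm{U}(n,n)$, since $\eta$ itself does not lie in $G$. Here the unramified hypothesis is essential: because $\overline{\varpi}_E = \varpi_E \in F^{\times}$, a direct calculation gives ${^t}\overline{\eta} J \eta = \varpi_E J$ and similarly ${^t}\overline{\eta^{-1}} J \eta^{-1} = \varpi_E^{-1} J$. Using the defining relation ${^t}\overline{g} J g = J$, these chain together as
\[
{^t}\overline{(\eta g \eta^{-1})} J (\eta g \eta^{-1}) = {^t}\overline{\eta^{-1}} \, {^t}\overline{g} \, (\varpi_E J) \, g \, \eta^{-1} = \varpi_E \, {^t}\overline{\eta^{-1}} J \eta^{-1} = J,
\]
so $\eta g \eta^{-1} \in G$. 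In the ramified case one has $\overline{\varpi}_E = -\varpi_E$, so the scalar would no longer factor out cleanly, which is presumably why a modified element would be required there.

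For the reverse inclusion, I would observe that $\eta^2 = \varpi_E 1_{2n}$ is a scalar matrix and hence central in $\mathrm{GL}_{2n}(E)$, so the inner automorphism $\mathrm{Ad}(\eta)$ is an involution on $\mathrm{GL}_{2n}(E)$. Applying this involution to the already established inclusion $\eta \mathfrak{P} \eta^{-1} \subseteq \mathfrak{P}$ immediately yields $\mathfrak{P} \subseteq \eta \mathfrak{P} \eta^{-1}$, completing the proof of the equality.
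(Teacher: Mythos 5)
Your proof is correct and takes essentially the same route as the paper: a direct block-matrix computation resting on the identity ${^t}\overline{\eta}J\eta = \varpi_E J$ (which is exactly the paper's observation that $\eta$ lies in the unitary similitude group, whose similitude character has kernel $G$, so that $\eta G\eta^{-1}=G$). You merely unwind that normality argument into an explicit verification of the defining relation for $\eta g\eta^{-1}$, and settle the reverse inclusion via the observation that $\eta^2$ is central, which is a clean way to upgrade the inclusion to the equality the paper asserts.
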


\begin{proof}
	\[
	\mathfrak{P}=
	\begin{bmatrix}
	\mathrm{GL}_n(\mathfrak{O}_E) & \mathrm{M}_n(\mathfrak{O}_E)\\
	\mathrm{M}_n(p_E) & \mathrm{GL}_n(\mathfrak{O}_E)
	\end{bmatrix} \bigcap G
	\]
	\begin{center}
		$\Longrightarrow \eta \mathfrak{P} \eta^{-1}=
		\eta \begin{bmatrix}
		\mathrm{GL}_n(\mathfrak{O}_E) & \mathrm{M}_n(\mathfrak{O}_E)\\
		\mathrm{M}_n(p_E) & \mathrm{GL}_n(\mathfrak{O}_E)
		\end{bmatrix} \eta^{-1} \bigcap \eta G \eta^{-1}$.
	\end{center}
	
	It is easy to show that 
	\[\eta \begin{bmatrix}
	\mathrm{GL}_n(\mathfrak{O}_E) & \mathrm{M}_n(\mathfrak{O}_E)\\
	\mathrm{M}_n(p_E) & \mathrm{GL}_n(\mathfrak{O}_E)
	\end{bmatrix} \eta^{-1}=
	\begin{bmatrix}
	\mathrm{GL}_n(\mathfrak{O}_E) & \mathrm{M}_n(\mathfrak{O}_E)\\
	\mathrm{M}_n(p_E) & \mathrm{GL}_n(\mathfrak{O}_E)
	\end{bmatrix}.\] 
	
	Now we claim that $\eta G \eta^{-1}= G$. To prove this let us consider \[G'=\lbrace g \in \mathrm{GL}_{2n}(E) \mid ^t\overline{g}Jg= \lambda(g)J \, \text{for some} \, \lambda(g) \in F^{\times}\rbrace.\] 
	
	Now $\eta \in G'$ clearly, as $^t\overline{\eta}J\eta= \varpi_E J= \varpi_F J$. And $\lambda \colon G' \longrightarrow F^{\times}$ is a homomorphism of groups with kernel $G$. So $G \unlhd G'$. As $\eta \in G'$ and $G \unlhd G'$, so $\eta G \eta^{-1}= G$. Hence $\eta \mathfrak{P}\eta^{-1}= \mathfrak{P}$.
\end{proof}\par

As $\mathfrak{P} \subseteq K(0)$ and $w_0 \in K(0)$, so $K(0) \supseteq \mathfrak{P} \amalg \mathfrak{P}w_0\mathfrak{P} \Longrightarrow \eta K(0) \eta^{-1} \supseteq \eta \mathfrak{P} \eta^{-1} \amalg \eta\mathfrak{P}w_0\mathfrak{P}\eta^{-1}$. But from lemma \ref{lem_4}, we know that  $\eta \mathfrak{P} \eta^{-1}= \mathfrak{P}$ and  $\eta\mathfrak{P}w_0\mathfrak{P}\eta^{-1}= (\eta\mathfrak{P}\eta^{-1})(\eta w_0 \eta^{-1})(\eta\mathfrak{P}\eta^{-1})=\mathfrak{P}w_1\mathfrak{P}$ (since $\eta w_0 \eta^{-1}=w_1$). So $\eta K(0) \eta^{-1} \supseteq \mathfrak{P} \amalg \mathfrak{P}w_1\mathfrak{P}$.\par

Let $r'$ be homomorphism of groups given by the map $r'\colon \eta k(0)\eta^{-1} \longrightarrow \mathsf{G}$ such that $r'(x)= (\eta^{-1}x\eta) mod p_E$ for $x \in \eta K(0) \eta^{-1}$. Observe that $r'$ is a surjective homomorphism of groups because $r'(\eta K(0) \eta^{-1})= (\eta^{-1} \eta K(0) \eta^{-1} \eta) mod p_E=  K(0) mod p_E =\mathsf{G}$. The kernel of group homomorphism is $\eta K_1(0) \eta^{-1}$. Now by the first isomorphism theorem of groups we have $\frac{\eta K(0) \eta^{-1}}{\eta K_1(0) \eta^{-1}} \cong \frac{K(0)}{K_1(0)} \cong \mathsf{G}$. Also $r'(\eta\mathfrak{P}\eta^{-1})=(\eta^{-1} \eta \mathfrak{P} \eta^{-1} \eta) mod p_E=\mathfrak{P}mod p_E= \mathsf{P}$. Let $\overline{\rho}$ be representation of $\mathsf{P}$ which when inflated to $\mathfrak{P}$ is given by $\rho$. The Hecke algebra of $\eta K(0) \eta^{-1}$ which we denote by $\mathcal{H}(\eta K(0) \eta^{-1}, \rho)$ is a sub-algebra of $\mathcal{H}(G,\rho)$.\par

The map $r':\eta K(0) \eta^{-1} \longrightarrow \mathsf{G}$ extends to a map from  $\mathcal{H}(\eta K(0) \eta^{-1},\rho)$ to $\mathcal{H}(\mathsf{G}, \overline{\rho})$ which we gain denote by $r'$. Thus $r' \colon \mathcal{H}(\eta K(0) \eta^{-1},\rho) \longrightarrow \mathcal{H}(\mathsf{G}, \overline{\rho})$ is given by
\[r'(\phi)(r'(x))= \phi(x) \]
\[\text{for} \, \phi \in \mathcal{H}(\eta K(0) \eta^{-1},\rho) \, \text{and} \, x \in \eta K(0) \eta^{-1}.\] \par

The proof that $r'$ is an isomorphism goes in the similar lines as Proposition ~\ref{pro_17}. We can observe that $r'(w_1)=w \in \mathsf{G}$, where $w$ is defined as before in this section. As we know from our previous discussion in this section, that there exists a unique $\phi$ in  $\mathcal{H}(\mathsf{G}, \overline{\rho})$ with support $\mathsf{P} w \mathsf{P}$ such that $\phi^2= q^n+ (q^n-1)\phi$. Hence  there is a unique element $\phi_1 \in \mathcal{H}(\eta K(0) \eta^{-1},\rho)$ such that $r'(\phi_1)=\phi$. Thus supp($\phi_1$)=$\mathfrak{P} w_1 \mathfrak{P}$ and $\phi_1^2= q^n+ (q^n-1)\phi_1$. Now $\phi_1$ can be extended to $G$ and viewed as an element in $\mathcal{H}(G, \rho)$ as $\mathfrak{P} w_1 \mathfrak{P} \subseteq \eta K(0) \eta^{-1}\subseteq G$. Thus $\phi_1$ satisfies the following relation in $\mathcal{H}(G,\rho)$:

\[
\phi_1^2= q^n+ (q^n-1)\phi_1.
\]\par 

Thus we have shown there exists $\phi_i \in \mathcal{H}(G,\rho)$ with supp($\phi_i$)=$\mathfrak{P}w_i\mathfrak{P}$ satisfying $\phi_i^2= q^n+ (q^n-1)\phi_i$  for $i=0,1$.
It can be further shown that $\phi_0$ and $\phi_1$ generate the Hecke algebra $\mathcal{H}(G, \rho)$. Let us denote the Hecke algebra $\mathcal{H}(G, \rho)$ by $\mathcal{A}$. So 
\[
\mathcal{A}= \mathcal{H}(G,\rho)= \left\langle \phi_i \colon G \to End_{\mathbb{C}}(\rho^{\vee})\; \middle| \;
\begin{varwidth}{\linewidth} 
$\phi_i$ is supported on 
$\mathfrak{P}w_i\mathfrak{P}$\\
and $\phi_i(pw_ip')= \rho^{\vee}(p)\phi_i(w_i)\rho^{\vee}(p')$\\
where $p,p' \in \mathfrak{P}, \, i=0,1$
\end{varwidth}
\right\rangle
\] where $\phi_i$ satisfies the relation:

\begin{center}
	$\phi_i^2= q^n+ (q^n-1) \phi_i$ for $i=0,1$.
\end{center}  

\begin{lemma}\label{lem_5}
	$\phi_0$ and $\phi_1$ are units in $\mathcal{A}$.
\end{lemma}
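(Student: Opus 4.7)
The plan is to derive an explicit two-sided inverse for each $\phi_i$ directly from the quadratic relation $\phi_i^2 = q^n + (q^n-1)\phi_i$ that was just established. Since this relation involves only powers of $\phi_i$ together with the identity element $e$ of $\mathcal{A}$ (namely the function supported on $\mathfrak{P}$ sending $p \mapsto \rho^\vee(p)$), everything takes place inside the commutative subalgebra $\mathbb{C}[\phi_i] \subseteq \mathcal{A}$, so the issue of left-versus-right inverses will not arise.

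Rewriting the Hecke relation as $\phi_i^{\,2} - (q^n-1)\phi_i - q^n e = 0$, I would factor it in $\mathbb{C}[\phi_i]$ as
\[
\phi_i \bigl(\phi_i - (q^n-1)e\bigr) \;=\; q^n e \;=\; \bigl(\phi_i - (q^n-1)e\bigr)\phi_i .
\]
Since $q$ is a positive integer and hence invertible in $\mathbb{C}$, this immediately gives
\[
\phi_i^{-1} \;=\; q^{-n}\,\phi_i \;-\; (1 - q^{-n})\,e ,
\]
and this element lies in $\mathcal{A}$. Verifying that $\phi_i \cdot \phi_i^{-1} = \phi_i^{-1} \cdot \phi_i = e$ is then a one-line computation that just substitutes the quadratic relation.

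There is essentially no obstacle here, beyond noting that the formal identity in $\mathcal{A}$ really is an identity; this follows from the general formalism of Hecke algebras $\mathcal{H}(G,\rho)$ for a compact open subgroup (the function supported on $\mathfrak{P}$ with value $\rho^\vee(p)$ at $p$ is a two-sided unit for convolution, once the Haar measure is normalized so that $\mu(\mathfrak{P}) = 1$). Hence both $\phi_0$ and $\phi_1$ are units in $\mathcal{A}$, and by the same argument the lemma also holds in the ramified case with $q^n$ replaced by $q^{n/2}$.
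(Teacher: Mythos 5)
Your proof is correct and follows essentially the same approach as the paper: read off the explicit inverse $q^{-n}\phi_i - (1-q^{-n})\,e$ directly from the quadratic relation $\phi_i^2 = q^n + (q^n-1)\phi_i$ (with $q^n$ replaced by $q^{n/2}$ in the ramified case). The extra remarks about working inside the commutative subalgebra $\mathbb{C}[\phi_i]$ and about the unit element of $\mathcal{H}(G,\rho)$ are harmless elaborations of what the paper leaves implicit.
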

\begin{proof}
	As $\phi_i^2= q^n + (q^n-1) \phi_i$ for $i=0,1$. So $\phi_i( \frac{\phi_i+(1-q^n)1}{q^n})= 1$ for i=0,1. Hence $\phi_0$ and $\phi_1$ are units in $\mathcal{A}$.
\end{proof}\par

\begin{lemma}\label{lem_6}
	Let $\phi, \psi \in \mathcal{H}(G,\rho) $ with support of $\phi, \psi$ being $\mathfrak{P}x\mathfrak{P}, \mathfrak{P}y\mathfrak{P}$ respectively. Then supp($\phi * \psi$)=supp($\phi \psi$) $\subseteq$ (supp($\phi$))(supp($\psi$))=$\mathfrak{P}x\mathfrak{P}y\mathfrak{P}$.
\end{lemma}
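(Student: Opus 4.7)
The plan is to unpack the convolution formula and argue pointwise. For any $z \in G$ we have
\[
(\phi * \psi)(z) = \int_G \phi(g)\psi(g^{-1}z)\, d\mu(g),
\]
so if $(\phi*\psi)(z) \neq 0$ then there must exist at least one $g \in G$ with both $\phi(g) \neq 0$ and $\psi(g^{-1}z) \neq 0$. By hypothesis, the first condition forces $g \in \mathfrak{P}x\mathfrak{P}$ and the second forces $g^{-1}z \in \mathfrak{P}y\mathfrak{P}$. Writing $z = g \cdot (g^{-1}z)$ then gives $z \in (\mathfrak{P}x\mathfrak{P})(\mathfrak{P}y\mathfrak{P})$.

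Taking closures (in the sense of supports, which are closed subsets of $G$) yields $\mathrm{supp}(\phi*\psi) \subseteq (\mathrm{supp}\,\phi)(\mathrm{supp}\,\psi) = \mathfrak{P}x\mathfrak{P}\cdot\mathfrak{P}y\mathfrak{P}$. The final identification $(\mathfrak{P}x\mathfrak{P})(\mathfrak{P}y\mathfrak{P}) = \mathfrak{P}x\mathfrak{P}y\mathfrak{P}$ is immediate from $\mathfrak{P}\mathfrak{P} = \mathfrak{P}$, since $\mathfrak{P}$ is a subgroup of $G$.

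There is no serious obstacle here; the only care needed is to keep the integration variable $g$ notationally distinct from the fixed double-coset representatives $x,y$ appearing in the supports, so that the elementary set-theoretic containment is transparent. The lemma is really just the statement that convolution support is contained in the set product of supports, specialized to bi-$\mathfrak{P}$-invariant functions.
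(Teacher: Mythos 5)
Your argument is the same as the paper's, differing only by the change of variables in the convolution integral (you write $\int_G \phi(g)\psi(g^{-1}z)\,dg$, the paper writes $\int_G \phi(zr^{-1})\psi(r)\,dr$), and both reach the conclusion by factoring $z$ through a point where both factors of the integrand are nonzero. The remark about taking closures is unnecessary here since the support of a locally constant, compactly supported function is already clopen, but it does no harm.
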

\begin{proof}
	As supp($\phi$)= $\mathfrak{P}x\mathfrak{P}$ and  supp($\psi$)= $\mathfrak{P}y\mathfrak{P}$, so if $z \in \text{supp}(\phi * \psi)$ then $(\phi * \psi)(z)=\int_G \phi(zr^{-1})\psi(r)dr \neq 0$. So there exists $r \in G$ such that $\phi(zr^{-1})\psi(r) \neq 0$. Because if $\phi(zr^{-1})\psi(r)=0$ for $r \in G$ then $\int_G\phi(zr^{-1})\psi(r)=0 \Longrightarrow (\phi* \psi)(z)=0$ which is a contradiction. So $\phi(zr^{-1})\psi(r) \neq 0$ for some $r \in G$. As $\phi(zr^{-1}) \neq 0 \Longrightarrow zr^{-1} \in \text{supp}(\phi)= \mathfrak{P}x\mathfrak{P}$ and $\psi(r) \neq 0 \Longrightarrow r \in \text{supp}(\psi) = \mathfrak{P}y\mathfrak{P}$. Hence $(zr^{-1})(r)=z \in (\text{supp} (\phi))(\text{supp}(\psi))= (\mathfrak{P}x\mathfrak{P})(\mathfrak{P}y\mathfrak{P})= \mathfrak{P}x\mathfrak{P}y\mathfrak{P}$. Hence supp($\phi * \psi$)= supp($\phi \psi$) $\subseteq$ (supp($\phi$))(supp($\psi$))=$\mathfrak{P}x\mathfrak{P}y\mathfrak{P}$.
\end{proof}\par  

From B-N pair structure theory we can show that, $\mathfrak{P}x\mathfrak{P}y\mathfrak{P}= \mathfrak{P}xy\mathfrak{P}\Longleftrightarrow l(xy)= l(x)+l(y)$. From lemma \ref{lem_6}, we  have supp($\phi_0\phi_1$) $\subseteq \mathfrak{P}w_0\mathfrak{P}w_1\mathfrak{P}$. But $\mathfrak{P}w_0\mathfrak{P}w_1\mathfrak{P} =\mathfrak{P}w_0w_1\mathfrak{P}$ (since $l(w_0w_1)=l(w_0)+l(w_1)$). Thus supp($\phi_0\phi_1$)$\subseteq \mathfrak{P}w_0w_1\mathfrak{P}$. Let $\zeta=w_0w_1$, So 
\begin{center}
	$\zeta = 
	\begin{bmatrix}
	\varpi_E 1_n & 0\\
	0     & \varpi_E^{-1} 1_n
	\end{bmatrix}.$
\end{center}\par

As $\phi_0, \phi_1$ are units in algebra $\mathcal{A}$, so $\psi = \phi_0\phi_1$ is a unit too in $\mathcal{A}$ and $\psi^{-1}= \phi_1^{-1}\phi_0^{-1}$. Now as we have seen before that supp($\phi_0\phi_1$) $\subseteq \mathfrak{P}w_0w_1\mathfrak{P} \Longrightarrow supp(\psi) \subseteq \mathfrak{P}\zeta \mathfrak{P}\Longrightarrow supp(\psi)= \varnothing \,\text{or} \, \mathfrak{P}\zeta\mathfrak{P}$. If supp($\psi$)= $\varnothing \Longrightarrow \psi= 0$ which is a contradiction as $\psi$ is a unit in $\mathcal{A}$. So supp($\psi$) = $\mathfrak{P} \zeta \mathfrak{P}$. As $\psi$ is a unit in $\mathcal{A}$, we can show as before from B-N pair structure theory that supp($\psi^2$) = $\mathfrak{P} \zeta^2 \mathfrak{P}$. Hence by induction on $n \in \mathbb{N}$, we can further show from B-N pair structure theory that supp($\psi^n$)= $\mathfrak{P} \zeta^n \mathfrak{P}$ for $n \in \mathbb{N}$. \par

Now $\mathcal{A}$ contains a sub- algebra generated by 
$\psi, \psi^{-1}$ over $\mathbb{C}$ and we denote this sub-algebra by $\mathcal{B}$. So $\mathcal{B}=\mathbb{C}[\psi,\psi^{-1}]$ where

\[
\mathcal{B}=\mathbb{C}[\psi,\psi^{-1}] = \left\lbrace c_k\psi^k + \cdots +c_l \psi^l \; \middle| \;
\begin{varwidth}{\linewidth} 
$c_k, \ldots ,c_l \in \mathbb{C};$\\
$k <l ; k,l \in \mathbb{Z}$
\end{varwidth}
\right\rbrace.
\]
\par

\begin{proposition}\label{algebra_isomorphism}
	The unique algebra homomorphism $\mathbb{C}[x, x^{-1}] \longrightarrow \mathcal{B}$ given by $x \longrightarrow \psi$ is an isomorphism. So $\mathcal{B} \simeq \mathbb{C}[x, x^{-1}]$. 
\end{proposition}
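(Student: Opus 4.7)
The plan is to exhibit an explicit two-sided inverse, or equivalently to check that the natural evaluation map $\mathbb{C}[x,x^{-1}] \to \mathcal{B}$, $x \mapsto \psi$, is both surjective and injective. Surjectivity is immediate from the very definition of $\mathcal{B}$, since $\mathcal{B}$ was introduced as the sub-algebra of $\mathcal{A}$ generated by $\psi$ and $\psi^{-1}$, so every element of $\mathcal{B}$ is by construction a $\mathbb{C}$-linear combination of integer powers of $\psi$. The whole content is therefore injectivity, that is, the algebraic independence of $\psi$.

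First I would reduce injectivity to the statement that the family $\{\psi^n\}_{n\in\mathbb{Z}}$ is $\mathbb{C}$-linearly independent in $\mathcal{H}(G,\rho)$. Indeed, the kernel of $\mathbb{C}[x,x^{-1}] \to \mathcal{B}$ is an ideal of the principal ideal domain $\mathbb{C}[x,x^{-1}]$, so it is either $0$ or generated by a nonzero Laurent polynomial $f(x) = \sum_{n=k}^{\ell} c_n x^n$ with $c_k, c_\ell \neq 0$; after multiplying by $x^{-k}$ (a unit) we may assume $k=0$ and $\ell \geq 1$, and then a nontrivial relation $\sum c_n \psi^n = 0$ among the powers is precisely what must be ruled out.

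Next I would use the supports to prove this linear independence. The key fact, already established in the paragraph preceding the proposition via the B–N pair / Iwahori--Matsumoto length computation $l(w_0 w_1) = l(w_0)+l(w_1)$ and induction, is that
\[
\operatorname{supp}(\psi^n) = \mathfrak{P} \zeta^n \mathfrak{P} \qquad \text{for every } n \in \mathbb{Z}.
\]
The essential point is then that the double cosets $\mathfrak{P}\zeta^n \mathfrak{P}$, $n \in \mathbb{Z}$, are pairwise disjoint. This follows because if $\mathfrak{P}\zeta^m\mathfrak{P} = \mathfrak{P}\zeta^n\mathfrak{P}$, then $\zeta^{m-n} \in \mathfrak{P} \zeta \mathfrak{P}^{-1} \cdots $, but more cleanly: the diagonal matrix $\zeta^n = \operatorname{diag}(\varpi_E^n 1_n, \varpi_E^{-n}1_n)$ has valuation data that is a $\mathfrak{P}$-bi-invariant: reducing the $(1,1)$ block modulo $\mathfrak{P}\cap L$ and using that $\mathfrak{P}\cap L = \mathfrak{P}_0$ consists of matrices with integral entries of unit determinant, one sees that the valuation of the determinant of the top-left block distinguishes the cosets. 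Thus $\mathfrak{P}\zeta^m\mathfrak{P}=\mathfrak{P}\zeta^n\mathfrak{P}$ forces $m=n$.

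Given this disjointness, a finite sum $\sum_n c_n \psi^n$ is a function whose value at $\zeta^n$ equals $c_n \psi^n(\zeta^n)$, and $\psi^n(\zeta^n)$ is invertible in $\operatorname{End}_{\mathbb{C}}(\rho^\vee)$ since $\psi^n$ is a unit in $\mathcal{A}$. Hence the sum vanishes identically if and only if all $c_n = 0$, which establishes injectivity and finishes the proof. The only step that requires real care is the bookkeeping that $\operatorname{supp}(\psi^n) = \mathfrak{P}\zeta^n \mathfrak{P}$ with the double cosets genuinely disjoint; this is a standard B–N pair computation, and it was already carried out in the discussion just above the proposition statement, so the argument here is essentially formal.
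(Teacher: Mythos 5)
Your proof follows essentially the same strategy as the paper's: surjectivity is immediate from the definition of $\mathcal{B}$, and injectivity is reduced to the linear independence of $\{\psi^n\}_{n\in\mathbb{Z}}$, which is established from the pairwise disjointness of the supports $\mathfrak{P}\zeta^n\mathfrak{P}$. The one spot where you diverge is the ``more cleanly'' aside that proposes using the valuation of the determinant of the top-left block of a representative as a $\mathfrak{P}$-bi-invariant to distinguish the double cosets. That invariant does not actually work as stated: for $p = \bigl[\begin{smallmatrix}A&B\\C&D\end{smallmatrix}\bigr]$, $p' = \bigl[\begin{smallmatrix}A'&B'\\C'&D'\end{smallmatrix}\bigr]$ in $\mathfrak{P}$, the top-left block of $p\zeta^n p'$ is $\varpi_E^n AA' + \varpi_E^{-n}BC'$, and the second term lies only in $M_n(\varpi_E^{1-n}\mathfrak{O}_E)$, so for $n \geqslant 2$ it can contribute entries of negative valuation and the determinant valuation of the block is not controlled. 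The correct justification of disjointness, which both you and the paper ultimately lean on, is the Bruhat decomposition for the affine $BN$-pair (distinct elements of the affine Weyl group give disjoint Iwahori double cosets), combined with the fact that $\zeta=w_0w_1$ has infinite order in the infinite dihedral group $\langle w_0, w_1\rangle$; your invocation of the length additivity $l(w_0w_1)=l(w_0)+l(w_1)$ and the remark that this was carried out in the preceding discussion is the right appeal, and the ad hoc valuation argument should simply be dropped.
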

\begin{proof}
	It is obvious that the map is an algebra homomorphism and is surjective as $\lbrace \psi^n \mid n \in \mathbb{Z} \rbrace$ spans $\mathcal{B}$. Now we show that the kernel of map is 0. Suppose $c_k \psi^k+ \cdots +c_l \psi^l=0$ with $c_k, \ldots , c_l \in \mathbb{C}; l> k \geqslant 0; l,k \in \mathbb{Z}$. Let $x \in \text{supp} \psi^s = \mathfrak{P}\zeta^{s} \mathfrak{P}$ where $0 \leqslant k \leqslant s \leqslant l$. As double cosets of a group are disjoint or equal, so $\psi^s(x) \neq 0$ and $\psi^i(x)=0$ for $0 \leqslant k \leqslant i \leqslant l, i \neq s$. Hence $c_k \psi^k(x)+ \cdots +c_l \psi^l(x)=0$ would imply that $c_s=0$. In a similar way we can show that $c_k=c_{k+1}=\ldots =c_l =0$. So $\lbrace \psi^k, \psi^{k+1}, \ldots ,\psi^l \rbrace$ is a linearly independent set when $0 \leqslant k <l ; k,l \in \mathbb{Z}$. Now suppose if $k<0$ and let $c_k \psi^k+ \cdots +c_l \psi^l=0$ with $c_k, \ldots , c_l \in \mathbb{C}; k,l \in \mathbb{Z}$. Let us assume without loss of generality that $k<l$. Multiplying throughout the above expression by $\psi^{-k}$, we have $c_k + \cdots +c_l \psi^{l-k}=0$. Now repeating the previous argument we have $c_k=c_{k+1}=\ldots =c_l =0$. So again $\lbrace \psi^k, \psi^{k+1}, \ldots ,\psi^l \rbrace$ is a linearly independent set when $k<0; k<l; k,l \in \mathbb{Z}$. Hence $\mathcal{B} \simeq \mathbb{C}[x, x^{-1}]$. \par
\end{proof}

\subsection{Ramified case:}

In this section we determine the structure of $\mathcal{H}(G,\rho)$ for the ramified case when $n$ is even. Recall $ \mathfrak{I}_G(\rho)= \mathfrak{P}N_G(\rho_0)\mathfrak{P}$. But from lemma \ref{lem_2}, $N_G(\rho_0)= \left<\mathfrak{P}_0, w_0, w_1\right>$. So $\mathfrak{I}_G(\rho)=\mathfrak{P}\left<\mathfrak{P}_0, w_0, w_1\right>\mathfrak{P}= \mathfrak{P}\left<w_0, w_1\right> \mathfrak{P}$, as $\mathfrak{P}_0$ is a subgroup of $\mathfrak{P}$. Let $V$ be the vector space corresponding to $\rho$. Let us recall that  $\mathcal{H}(G, \rho)$ consists of maps $f \colon G \to End_{\mathbb{C}}(V^{\vee})$ such that support of $f$ is compact and $f(pgp')= \rho^{\vee}(p)f(g)\rho^{\vee}(p')$ for $p,p' \in \mathfrak{P}, g \in G$. In fact $\mathcal{H}(G,\rho)$ consists of $\mathbb{C}$-linear combinations of maps $f \colon G \longrightarrow End_{\mathbb{C}}(V^{\vee})$ such that $f$ is supported on $\mathfrak{P}x\mathfrak{P}$ where $x \in \mathfrak{I}_G(\rho)$ and $f(pxp')= \rho^{\vee}(p)f(x)\rho^{\vee}(p')$ for $p,p' \in \mathfrak{P}$. We shall now show there exists $\phi_0 \in \mathcal{H}(G,\rho)$ with support $\mathfrak{P}w_0\mathfrak{P}$ and satisfies $\phi_0^2= q^{n/2}+ (q^{n/2}-1)\phi_0$. Let

\[
K(0)= \mathrm{U}(n,n) \cap \mathrm{GL}_{2n}(\mathfrak{O}_E)=\{g \in \mathrm{GL}_{2n}(\mathfrak{O}_E)\mid ^t\overline{g}Jg=J\},
\]
\[
K_1(0)=\{g \in 1+ \varpi_E \mathrm{M}_{2n}(\mathfrak{O}_E)\mid ^t\overline{g}Jg=J\},
\]
\[
\mathsf{G}= \{g \in \mathrm{GL}_{2n}(k_E) \mid ^t\overline{g}Jg=J\}.
\]

The map $r$ from $K(0)$ to $\mathsf{G}$ given by $r\colon K(0)\xrightarrow{\text{mod} \, p_E}\mathsf{G}$ is a surjective group homomorphism with kernel $K_1(0)$. So by the first isomorphism theorem of groups we have:

\begin{center}
	$\frac{K(0)}{K_1(0)}\cong \mathsf{G}$.
\end{center}

$r(\mathfrak{P})= \mathsf{P}=
\begin{bmatrix}
\mathrm{GL}_n(k_E) &  \mathrm{M}_n(k_E) \\
0         &  \mathrm{GL}_n(k_E)
\end{bmatrix} \bigcap \mathsf{G}$= Siegel parabolic subgroup of $\mathsf{G}$.\\

Now $\mathsf{P}= \mathsf{L} \ltimes \mathsf{U}$, where $\mathsf{L}$ is the Siegel Levi component of $\mathsf{P}$ and $\mathsf{U}$ is the unipotent radical of $\mathsf{P}$. Here

\[
\mathsf{L}= \Bigg \lbrace
\begin{bmatrix}
a & 0\\
0 & ^t\overline{a}^{-1}\\
\end{bmatrix} \mid a \in \mathrm{GL}_n(k_E) \Bigg \rbrace,
\]

\[
\mathsf{U}= \Bigg \lbrace
\begin{bmatrix}
1_n & X \\
0  & 1_n \\
\end{bmatrix} \mid X \in \mathrm{M}_n(k_E), X+{^t}\overline{X}=0 \Bigg \rbrace.
\]

Let $V$ be the vector space corresponding to $\rho$. The Hecke algebra $\mathcal{H}(K(0),\rho)$ is a sub-algebra of $\mathcal{H}(G,\rho)$.\par

Let $\overline{\rho}$ be the representation of $\mathsf{P}$ which when inflated to $\mathfrak{P}$ is given by $\rho$ and $V$ is also the vector space corresponding to $\overline{\rho}$. The Hecke algebra $\mathcal{H}(\mathsf{G},\overline{\rho})$ looks as follows:

\[
\mathcal{H}(\mathsf{G},\overline{\rho})= \left\lbrace f \colon \mathsf{G} \to End_{\mathbb{C}}(V^{\vee}) \; \middle|  \;
\begin{varwidth}{\linewidth}
$f(pgp')= \overline{\rho}^{\vee}(p)f(g)\overline{\rho}^{\vee}(p')$\\
where $p,p' \in \mathsf{P}, \, g \in \mathsf{G}$
\end{varwidth}
\right \rbrace.
\] \par

Now the homomorphism $r \colon K(0) \longrightarrow \mathsf{G}$ extends to a map from  $\mathcal{H}(K(0),\rho)$ to $\mathcal{H}(\mathsf{G},\overline{\rho})$ which we again denote by $r$. Thus $r \colon \mathcal{H}(K(0),\rho) \longrightarrow \mathcal{H}(\mathsf{G},\overline{\rho})$ is given by

\[
r(\phi)(r(x))=\phi(x)
\]
\[ 
\text{for} \, \phi \in \mathcal{H}(K(0),\rho) \, \text{and} \, x \in K(0).
\]

As in the unramified case, when $n$ is odd, we can show that $\mathcal{H}(K(0),\rho)$ is isomorphic to $\mathcal{H}(\mathsf{G},\overline{\rho})$ as algebras via $r$.\\\par

Let $w=r(w_0)=r(
\begin{bmatrix}
0 & 1_n \\
1_n & 0
\end{bmatrix})=
\begin{bmatrix}
0 & 1_n \\
1_n & 0
\end{bmatrix} \in \mathsf{G}$. Clearly $K(0) \supseteq \mathfrak{P}\amalg\mathfrak{P}w_0\mathfrak{P}\Longrightarrow r(K(0))\supseteq r(\mathfrak{P}\amalg\mathfrak{P}w_0\mathfrak{P})\Longrightarrow\mathsf{G}\supseteq r(\mathfrak{P})\amalg r(\mathfrak{P}w_0\mathfrak{P})=\mathsf{P}\amalg \mathsf{P}w\mathsf{P}$. So $\mathsf{G}\supseteq \mathsf{P}\amalg \mathsf{P}w\mathsf{P}$.\par

Now $\mathsf{G}$ is a finite group. In fact, it is the special orthogonal group consisting of matrices of size $2n \times 2n$ over finite field $k_E$ or $\mathbb{F}_q$. So $\mathsf{G} = SO_{2n}(\mathbb{F}_q)$. \par 

According to the Theorem 6.3 in \cite{MR2276353}, there exists a unique $\phi$ in  $\mathcal{H}(\mathsf{G}, \overline{\rho})$ with support $\mathsf{P} w \mathsf{P}$ such that $\phi^2= q^{n/2}+ (q^{n/2}-1)\phi$. Hence there is a unique element $\phi_0 \in \mathcal{H}(K(0),\rho)$ such that $r(\phi_0)=\phi$. Thus supp($\phi_0$)=$\mathfrak{P} w_0 \mathfrak{P}$ and $\phi_0^2= q^{n/2}+ (q^{n/2}-1)\phi_0$. Now $\phi_0$ can be extended to $G$ and viewed as an element in $\mathcal{H}(G, \rho)$ as $\mathfrak{P} w_0 \mathfrak{P} \subseteq K(0) \subseteq G$. Thus $\phi_0$ satisfies the following relation in $\mathcal{H}(G,\rho)$:

\[
\phi_0^2= q^{n/2}+ (q^{n/2}-1)\phi_0.
\]\par 

We shall now show there exists $\phi_1 \in \mathcal{H}(G, \rho)$ with support $\mathfrak{P}w_1\mathfrak{P}$ satisfying the same relation as $\phi_0$.\par

We know that $w_1=
\begin{bmatrix}
0 & \overline\varpi^{-1}_E 1_n \\
\varpi_E 1_n & 0
\end{bmatrix}, \overline\varpi^{-1}_E= -\varpi_E$. 
So $w_1= 
\begin{bmatrix}
0 & -\varpi^{-1}_E 1_n\\
\varpi_E 1_n & 0
\end{bmatrix}$. Let $\eta= 
\begin{bmatrix}
\varpi_E 1_n & 0 \\
0  & 1_n
\end{bmatrix}$. So, $\eta w_1 \eta^{-1}= J'=
\begin{bmatrix}
0 & -1_n\\
1_n & 0
\end{bmatrix}$. Recall that $\mathfrak{P}$ looks as follows:

\begin{center}
	$\mathfrak{P}=\begin{bmatrix}
	\mathrm{GL}_n(\mathfrak{O}_E) & \mathrm{M}_n(\mathfrak{O}_E)\\
	\mathrm{M}_n(p_E) & \mathrm{GL}_n(\mathfrak{O}_E)
	\end{bmatrix} \bigcap G.$
\end{center}\par

Now
\[
\eta
\begin{bmatrix}
\mathrm{GL}_n(\mathfrak{O}_E) & \mathrm{M}_n(\mathfrak{O}_E)\\
\mathrm{M}_n(p_E) & \mathrm{GL}_n(\mathfrak{O}_E)
\end{bmatrix}\eta^{-1}=
\begin{bmatrix}
\mathrm{GL}_n(\mathfrak{O}_E) & \mathrm{M}_n(p_E)\\
\mathrm{M}_n(\mathfrak{O}_E) & \mathrm{GL}_n(\mathfrak{O}_E)
\end{bmatrix},
\]

\[\eta G \eta^{-1}=G'= \lbrace g \in \mathrm{GL}_{2n}(E) \mid ^t \overline{g} J' g= J'\rbrace.\]

Hence
\begin{center}
	$\eta\mathfrak{P}\eta^{-1}=\begin{bmatrix}
	\mathrm{GL}_n(\mathfrak{O}_E) & \mathrm{M}_n(p_E)\\
	\mathrm{M}_n(\mathfrak{O}_E) & \mathrm{GL}_n(\mathfrak{O}_E)
	\end{bmatrix} \bigcap G'.$
\end{center}\par

Therefore $\eta\mathfrak{P}\eta^{-1}$ is the opposite of the Siegel Parahoric subgroup of $G'$. Let

\begin{center}
	$K'(0)= \langle\mathfrak{P}, w_1\rangle.$
\end{center}

And let
\begin{align*}
\mathsf{G}' &=\lbrace g \in \mathrm{GL}_{2n}(k_E) \mid ^t\overline{g}J'g=J'\rbrace\\
&= \lbrace g \in \mathrm{GL}_{2n}(k_E) \mid ^tgJ'g=J'\rbrace.
\end{align*} 

Let $r'\colon K'(0) \longrightarrow \mathsf{G}'$ be the group homomorphism given by
\[r'(x)= (\eta x \eta^{-1})mod p_E \,\text{where} \, x \in K'(0).\]

So we have $r'(K(0))= (\eta K'(0) \eta^{-1}) mod p_E= (\eta\langle \mathfrak{P}, w_1 \rangle\eta^{-1})mod p_E$. Let \[r'(\mathfrak{P})=(\eta \mathfrak{P} \eta^{-1})mod p_E = \overline{\mathsf{P}}'.\] We can see that  $r'(w_1)= (\eta w_1 \eta^{-1}) mod p_E=J' mod p_E=w'=\begin{bmatrix}
0 & -1_n\\
1_n & 0
\end{bmatrix}$. So $\overline {\mathsf{P}}'= r'(\mathfrak{P})= (\eta \mathfrak{P} \eta^{-1}) mod p_E=
\begin{bmatrix}
\mathrm{GL}_n(k_E) & 0\\
\mathrm{M}_n(k_E) & \mathrm{GL}_n(k_E)
\end{bmatrix} \bigcap \mathsf{G}'$. Clearly $\overline {\mathsf{P}}'$ is the opposite of Siegel parabolic subgroup of $\mathsf{G}'$. Hence  $r'(K(0))= \langle \overline {\mathsf{P}}', w' \rangle= \mathsf{G}'$, as $\overline {\mathsf{P}}'$ is a maximal subgroup of $\mathsf{G}'$ and $w'$ does not lie in $\overline {\mathsf{P}}'$. So $r'$ is a surjective homomorphism of groups.\par

Let $V$ be the vector space corresponding to $\rho$. The Hecke algebra $\mathcal{H}(K'(0),\rho)$ is a sub-algebra of $\mathcal{H}(G,\rho)$.\par

Let $\overline{\rho}'$ be the representation of $\overline{\mathsf{P}}'$ which when inflated to $^{\eta}\mathfrak{P}$ is given by $^\eta{\rho}$ and $V$ is also the vector space corresponding to $\overline{\rho}'$. Now the Hecke algebra $\mathcal{H}(\mathsf{G}',\overline{\rho}')$ looks as follows:

\[
\mathcal{H}(\mathsf{G}',\overline{\rho}')= \left\lbrace f \colon \mathsf{G}' \to End_{\mathbb{C}}(V^{\vee}) \; \middle|  \;
\begin{varwidth}{\linewidth}
$f(pgp')= \overline{\rho}'^{\vee}(p)f(g)\overline{\rho}'^{\vee}(p')$\\
where $p,p' \in \overline{\mathsf{P}}', \, g \in \mathsf{G}'$
\end{varwidth}
\right \rbrace.
\]\par

Now the homomorphism $r' \colon K'(0) \longrightarrow \mathsf{G'}$ extends to a map from  $\mathcal{H}(K'(0),\rho)$ to $\mathcal{H}(\mathsf{G}',\overline{\rho}')$ which we again denote by $r'$. Thus $r' \colon \mathcal{H}(K'(0),\rho) \longrightarrow \mathcal{H}(\mathsf{G}',\overline{\rho}')$ is given by

\[
r'(\phi)(r'(x))=\phi(x)
\]
\[ 
\text{for} \, \phi \in \mathcal{H}(K'(0),\rho) \, \text{and} \, x \in K'(0).
\]

As in the unramified case when $n$ is odd, we can show that $\mathcal{H}(K'(0),\rho)$ is isomorphic to $\mathcal{H}(\mathsf{G}',\overline{\rho}')$ as algebras via $r'$.\par

Clearly $K'(0) \supseteq \mathfrak{P}\amalg\mathfrak{P}w_1\mathfrak{P}\Longrightarrow r'(K'(0))\supseteq r'(\mathfrak{P}\amalg \mathfrak{P}w_1\mathfrak{P})\Longrightarrow \mathsf{G}'\supseteq r'(\mathfrak{P})\amalg r'(\mathfrak{P}w_1\mathfrak{P})=\overline{\mathsf{P}}'\amalg \overline{\mathsf{P}}'w'\overline{\mathsf{P}}'$. So $\mathsf{G}'\supseteq \overline{\mathsf{P}}'\amalg 
\overline{\mathsf{P}}'w'\overline{\mathsf{P}}'$.\par

Now $\mathsf{G}'$ is a finite group. In fact, it is the symplectic group consisting of matrices of size $2n \times 2n$ over finite field $k_E$ or $\mathbb{F}_q$. So $\mathsf{G}' = Sp_{2n}(\mathbb{F}_q)$. \par 

According to the Theorem 6.3 in \cite{MR2276353}, there exists a unique $\phi$ in  $\mathcal{H}(\mathsf{G}', \overline{\rho}')$ with support $\overline{\mathsf{P}}' w' \overline{\mathsf{P}}'$ such that $\phi^2= q^{n/2}+ (q^{n/2}-1)\phi$. Hence there is a unique element $\phi_1 \in \mathcal{H}(K'(0),\rho)$ such that $r'(\phi_1)=\phi$. Thus supp($\phi_1$)=$\mathfrak{P} w_1 \mathfrak{P}$ and $\phi_1^2= q^{n/2}+ (q^{n/2}-1)\phi_1$. Now $\phi_1$ can be extended to $G$ and viewed as an element in $\mathcal{H}(G, \rho)$ as $\mathfrak{P} w_1 \mathfrak{P} \subseteq K'(0) \subseteq G$. Thus $\phi_1$ satisfies the following relation in $\mathcal{H}(G,\rho)$:

\[
\phi_1^2= q^{n/2}+ (q^{n/2}-1)\phi_1.
\]\par 

Thus we have shown there exists $\phi_i \in \mathcal{H}(G,\rho)$ with supp($\phi_i$)=$\mathfrak{P}w_i\mathfrak{P}$ satisfying $\phi_i^2= q^{n/2}+ (q^{n/2}-1)\phi_i$  for $i=0,1$. It can be further shown that $\phi_0$ and $\phi_1$ generate the Hecke algebra $\mathcal{H}(G, \rho)$. Let us denote the Hecke algebra $\mathcal{H}(G, \rho)$ by $\mathcal{A}$. So 

\[
\mathcal{A}= \mathcal{H}(G,\rho)= \left\langle \phi_i \colon G \to End_{\mathbb{C}}(\rho^{\vee})\; \middle| \;
\begin{varwidth}{\linewidth} 
$\phi_i$ is supported on 
$\mathfrak{P}w_i\mathfrak{P}$\\
 $\,\text{and} \, \phi_i(pw_ip')= \rho^{\vee}(p)\phi_i(w_i)\rho^{\vee}(p')$\\
where $p,p' \in \mathfrak{P}, \,i=0,1$
\end{varwidth}
\right\rangle
\] where $\phi_i$ has support $\mathfrak{P} w_i \mathfrak{P}$  and $\phi_i$ satisfies the relation:

\begin{center}
	$\phi_i^2= q^{n/2}+ (q^{n/2}-1) \phi_i$ for $i=0,1$.
\end{center}  

\begin{lemma}\label{lem_7}
	$\phi_0$ and $\phi_1$ are units in $\mathcal{A}$.
\end{lemma}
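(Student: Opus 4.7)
The plan is to mimic the proof of Lemma \ref{lem_5} verbatim, since the only change from the unramified setting is that the parameter $q^n$ is replaced by $q^{n/2}$. The quadratic relation $\phi_i^2 = q^{n/2} + (q^{n/2}-1)\phi_i$ can be rewritten as $\phi_i\bigl(\phi_i - (q^{n/2}-1)\cdot 1\bigr) = q^{n/2}\cdot 1$, and since $q^{n/2}$ is a nonzero complex scalar (recall $q$ is a power of an odd prime and $n$ is a positive even integer), we may divide by it.

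Concretely, I would simply exhibit the inverse: set
\[
\phi_i^{-1} \;=\; \frac{\phi_i + (1-q^{n/2})\cdot 1}{q^{n/2}}
\]
and check that $\phi_i \cdot \phi_i^{-1} = q^{-n/2}\bigl(\phi_i^2 + (1-q^{n/2})\phi_i\bigr) = q^{-n/2}\bigl(q^{n/2} + (q^{n/2}-1)\phi_i + (1-q^{n/2})\phi_i\bigr) = 1$. Since $\mathcal{A}$ is an associative $\mathbb{C}$-algebra and the quadratic relation implies $\phi_i$ commutes with the scalar-plus-$\phi_i$ expression on the right, the same computation shows $\phi_i^{-1}\cdot \phi_i = 1$. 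Hence both $\phi_0$ and $\phi_1$ are units.

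There is no real obstacle here; the step is purely formal and identical in spirit to the unramified case. The only thing worth noting is that one must know $q^{n/2}\neq 0$ in $\mathbb{C}$, which is immediate, so the division is legitimate. Thus the lemma follows in one short line once the quadratic relation established in the preceding paragraphs is in hand.
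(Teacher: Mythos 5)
Your proof is correct and takes exactly the same approach as the paper: solve the quadratic relation $\phi_i^2 = q^{n/2} + (q^{n/2}-1)\phi_i$ for an explicit inverse $\phi_i^{-1} = q^{-n/2}\bigl(\phi_i + (1-q^{n/2})\cdot 1\bigr)$, mirroring Lemma \ref{lem_5} with $q^n$ replaced by $q^{n/2}$. No differences worth noting.
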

\begin{proof}
	As $\phi_i^2= q^{n/2}+ (q^{n/2}-1) \phi_i$ for $i=0,1$. So $\phi_i( \frac{\phi_i+(1-q^{n/2})1}{q^{n/2}})= 1$ for i=0,1. Hence $\phi_0$ and $\phi_1$ are units in $\mathcal{A}$.
\end{proof}\par

As $\phi_0, \phi_1$ are units in $\mathcal{A}$ which is an algebra, so $\psi= \phi_0\phi_1$ is a unit too in $\mathcal{A}$ and $\psi^{-1}= \phi_1^{-1}\phi_0^{-1}$. As in the unramified case when $n$ is odd, we can show that $\mathcal{A}$ contains sub-algebra $\mathcal{B}=\mathbb{C}[\psi,\psi^{-1}]$ where

\[
\mathcal{B}=\mathbb{C}[\psi,\psi^{-1}] = \left\lbrace c_k\psi^k + \cdots +c_l \psi^l \; \middle| \;
\begin{varwidth}{\linewidth} 
$c_k, \ldots ,c_l \in \mathbb{C};$\\
$k <l ; k,l \in \mathbb{Z}$
\end{varwidth}
\right\rbrace.
\]
\par 

Further, as in the unramified case when $n$ is odd, we can show that $\mathbb{C}[\psi,\psi^{-1}] \simeq \mathbb{C}[x,x^{-1}]$ as $\mathbb{C}$-algebras. \par

\section{Structure of $\mathcal{H}(L,\rho_0)$} \label{sec_7}

In this section we describe the structure of $\mathcal{H}(L,\rho_0)$. Thus we need first to determine 

\[
N_L(\rho_0)=\lbrace m \in N_L(\mathfrak{P}_0) \mid \rho_0^m \simeq \rho_0 \rbrace. 
\] \par

We know from lemma \ref{Normalizer_of_K_0_in_GL_n(E)} that $N_{\mathrm{GL}_n(E)}(K_0)= K_0Z$, so we have $N_L(\mathfrak{P}_0)= Z(L)\mathfrak{P}_0$. Since $Z(L)$ clearly normalizes $\rho_0$  and $\rho_0$ is an irreducible supercuspidal representation of $\mathfrak{P}_0$, so $N_L(\rho_0)= Z(L)\mathfrak{P}_0$.\par

Now that we have calculated $N_L(\rho_0)$, we determine the structure of $\mathcal{H}(L,\rho_0)$. Using the cuspidality of $\rho_0$, it can be shown by A.1 Appendix \cite{MR1235019} that $\mathfrak{I}_L(\rho_0)= \mathfrak{P}_0 N_L(\rho_0)\mathfrak{P}_0$. As $N_L(\rho_0)= Z(L)\mathfrak{P}_0$, so $\mathfrak{I}_L(\rho_0)= \mathfrak{P}_0 Z(L)\mathfrak{P}_0 \mathfrak{P}_0 = Z(L)\mathfrak{P}_0$. Let $V$ be the vector space of $\rho_0$.\par

The Hecke algebra $\mathcal{H}(L,\rho_0)$ consists of $\mathbb{C}$-linear combinations of maps $f \colon L \longrightarrow End_{\mathbb{C}}(V^{\vee})$ such that each map $f$ is supported on $\mathfrak{P}_0x\mathfrak{P}_0$ where $x \in \mathfrak{I}_L(\rho_0)=Z(L)\mathfrak{P}_0$ and $f(pxp')=\rho_0^{\vee}(p)f(x)\rho_0^{\vee}(p')$ for $p,p' \in \mathfrak{P}_0$. It is clear that

\[
Z(L)\mathfrak{P}_0= \coprod_{n \in \mathbb{Z}} \mathfrak{P}_0 \zeta^n.
\]

So the Hecke algebra $\mathcal{H}(L,\rho_0)$ consists of $\mathbb{C}$-linear combinations of maps $f \colon L \longrightarrow End_{\mathbb{C}}(V^{\vee})$ such that each map $f$ is supported on $\mathfrak{P}_0x\mathfrak{P}_0$ where $x \in \mathfrak{P}_0 \zeta^n$ with $n \in \mathbb{Z}$ and  
$f(pxp')=\rho_0^{\vee}(p)f(x)\rho_0^{\vee}(p')$ for $p,p' \in \mathfrak{P}_0$.\par

Let $\phi_1, \phi_2 \in \mathcal{H}(L,\rho_0)$ with $\text{supp}(\phi_1) =\mathfrak{P}_0 z_1$ and $\text{supp}(\phi_2) =\mathfrak{P}_0 z_2$ respectively with $z_1, z_2 \in Z(L)$. As $\rho_0$ is an irreducible  supercuspidal representation of $\mathfrak{P}_0$. So if $f \in \mathcal{H}(L,\rho_0)$ with $\text{supp}(f)=\mathfrak{P}_0 z$ where $z \in Z(L)$ then from Schur's lemma $f(z)=c 1_{V^{\vee}}$ for some $c \in \mathbb{C}^{\times}$. Hence $\phi_1(z_1)= c_1 1_{V^{\vee}}$ and $\phi_2(z_2)=c_2 1_{V^{\vee}}$ where $c_1,c_2 \in \mathbb{C}^{\times}$.\par

We have $\text{supp}(\phi_1\phi_2) \subseteq (\text{supp}(\phi_1))(\text{supp}(\phi_2))= \mathfrak{P}_0 z_1 \mathfrak{P}_0 z_2= \mathfrak{P}_0 z_1 z_2$. The proof goes in the similar lines as lemma \ref{lem_6}.\par

We assume without loss of generality that $\text{vol}\mathfrak{P}_0= \text{vol}\mathfrak{P}_{-}= \text{vol}\mathfrak{P}_{+}=1$. Thus we have $\text{vol}\mathfrak{P}=1$.

\begin{lemma}\label{convolution_lemma}
	Let $\phi_1, \phi_2 \in \mathcal{H}(L,\rho_0)$ with $\text{supp}(\phi_1)=\mathfrak{P}_0z_1$ and  $\text{supp}(\phi_2)=\mathfrak{P}_0 z_2$ where $z_1, z_2 \in Z(L)$. Also let $\phi_1(z_1)=c_1 1_{V^{\vee}}$ and $\phi_2(z_2)=c_2 1_{V^{\vee}}$ where $c_1,c_2 \in \mathbb{C}^{\times}$. Then $(\phi_1*\phi_2)(z_1z_2)= \phi_1(z_1)\phi_2(z_2)=c_1c_2 1_{V^{\vee}}$.	
\end{lemma}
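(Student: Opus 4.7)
My plan is to compute the convolution $(\phi_1*\phi_2)(z_1z_2)$ directly from the definition, using crucially that $z_1, z_2$ lie in the center $Z(L)$, so that $z_i$ commutes with every element of $\mathfrak{P}_0$, and the single cosets $\mathfrak{P}_0 z_i = z_i \mathfrak{P}_0$. First I will write
\[
(\phi_1 * \phi_2)(z_1 z_2) \;=\; \int_L \phi_1(y)\,\phi_2(y^{-1} z_1 z_2)\,dy,
\]
and observe that for the integrand to be non-zero we need $y \in \mathrm{supp}(\phi_1) = \mathfrak{P}_0 z_1$. Parametrize $y = p z_1$ with $p \in \mathfrak{P}_0$; since $\mathrm{vol}(\mathfrak{P}_0) = 1$, the Haar integration collapses to an integral over $\mathfrak{P}_0$ against $dp$.

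Next I will simplify $y^{-1} z_1 z_2 = z_1^{-1} p^{-1} z_1 z_2 = p^{-1} z_2$, using centrality of $z_1$. Then I apply the $(\mathfrak{P}_0, \mathfrak{P}_0)$-bi-equivariance of each $\phi_i$ together with the normalizations $\phi_1(z_1) = c_1\, 1_{V^\vee}$ and $\phi_2(z_2) = c_2\, 1_{V^\vee}$. Specifically,
\[
\phi_1(p z_1) = \rho_0^\vee(p)\,\phi_1(z_1) = c_1\,\rho_0^\vee(p),
\]
and, writing $p^{-1}z_2 = z_2 p^{-1}$ by centrality,
\[
\phi_2(p^{-1} z_2) = \phi_2(z_2 p^{-1}) = \phi_2(z_2)\,\rho_0^\vee(p^{-1}) = c_2\,\rho_0^\vee(p^{-1}).
\]

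Multiplying these two endomorphisms of $V^\vee$ gives
\[
\phi_1(pz_1)\,\phi_2(p^{-1}z_2) \;=\; c_1 c_2\,\rho_0^\vee(p)\rho_0^\vee(p^{-1}) \;=\; c_1 c_2\, 1_{V^\vee},
\]
a constant in $p$. Integrating the constant $c_1 c_2\, 1_{V^\vee}$ over $\mathfrak{P}_0$ against $dp$ (recall $\mathrm{vol}(\mathfrak{P}_0)=1$) yields $c_1 c_2\, 1_{V^\vee} = \phi_1(z_1)\phi_2(z_2)$, which is exactly the claim.

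I do not expect any real obstacle here: the lemma is essentially bookkeeping once one uses centrality of $z_1, z_2$ to eliminate the conjugation and reduce the support of the integrand to a single copy of $\mathfrak{P}_0$. The only point that needs the slightest care is applying the bi-equivariance relation in the correct order on each factor (so that the $\rho_0^\vee(p)$ and $\rho_0^\vee(p^{-1})$ cancel as endomorphisms rather than as scalars), and fixing the normalization $\mathrm{vol}(\mathfrak{P}_0)=1$ adopted in the preceding paragraph of the paper.
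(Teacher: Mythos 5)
Your proof is correct and follows essentially the same computation as the paper: restrict the convolution integral to the relevant single coset, use centrality of $z_1,z_2$ to commute them past $p$, apply bi-equivariance so that $\rho_0^{\vee}(p)\rho_0^{\vee}(p^{-1})$ cancels, and integrate the constant over $\mathfrak{P}_0$ with $\mathrm{vol}(\mathfrak{P}_0)=1$. The only (immaterial) difference is that you substitute over $\mathrm{supp}(\phi_1)$ whereas the paper substitutes over $\mathrm{supp}(\phi_2)$.
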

\begin{proof}
	
	\begin{align*}
	(\phi_1*\phi_2)(z_1z_2)&= \int_L \phi_1(z_1z_2y^{-1})\phi_2(y)dp\\
	&=\int_{\mathfrak{P}_0}\phi_1(z_1z_2z_2^{-1}p^{-1})\phi_2(z_2p)dp\\
	&=\int_{\mathfrak{P}_0}\phi_1(z_1p^{-1})\phi_2(pz_2)dp\\
	&=\int_{\mathfrak{P}_0}\phi_1(z_1)\rho_0^{\vee}(p^{-1})\rho_0^{\vee}(p)\phi_2(z_2)dp\\
	&=\int_{\mathfrak{P}_0}\phi_1(z_1)\phi_2(z_2)dp\\
	&=\int_{\mathfrak{P}_0}c_1c_2 1_{V^{\vee}}dp\\
	&=c_1c_2\text{Vol}(\mathfrak{P}_0)1_{V^{\vee}}\\
	&=c_1c_2 1_{V^{\vee}}\\
	&=\phi_1(z_1)\phi_2(z_2).
	\end{align*}
\end{proof}
As $\text{supp}(\phi_1*\phi_2)= \text{supp}(\phi_1\phi_2) \subseteq \mathfrak{P}_0 z_1 z_2$, so $\text{supp}(\phi_1*\phi_2)= \varnothing$ or $\mathfrak{P}_0z_1z_2$. If $\text{supp}(\phi_1*\phi_2)= \varnothing$ then it means that $(\phi_1*\phi_2)=0$. This contradicts $(\phi_1*\phi_2)(z_1z_2)=c_1c_2 \neq 0$. So $\text{supp}(\phi_1*\phi_2)= \mathfrak{P}_0z_1z_2$.\par

This implies that $\phi_1$ is invertible and $\phi_1^{-1}$ be it's inverse. Thus $\text{supp}(\phi_1^{-1})=\mathfrak{P}_0z_1^{-1}$ and $\phi_1^{-1}(z_1^{-1})=c_1^{-1}1_{V^{\vee}}$.\par

Define $\alpha \in \mathcal{H}(L,\rho_0)$ by $\text{supp}(\alpha)= \mathfrak{P}_0\zeta$ 
and $\alpha(\zeta)=1_{V^{\vee}}$.\par

\begin{proposition}\label{pro_20}
	\begin{enumerate}
		\item $\alpha^n(\zeta^n) = (\alpha(\zeta))^n$ for $n \in \mathbb{Z}.$
		\item $\text{supp}(\alpha^n)= \mathfrak{P}_0 \zeta^n \mathfrak{P}_0= \mathfrak{P}_0 \zeta^n =  \zeta^n \mathfrak{P}_0$ for $n \in \mathbb{Z}.$
	\end{enumerate}
\end{proposition}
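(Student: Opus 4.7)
The plan is to prove both statements simultaneously by induction on $|n|$, using Lemma \ref{convolution_lemma} as the workhorse and the centrality of $\zeta$ in $L$ to identify $\mathfrak{P}_0\zeta^n$ with $\zeta^n\mathfrak{P}_0$. Since $\zeta \in Z(L)$, conjugation by $\zeta$ fixes $\mathfrak{P}_0$ pointwise, so $\mathfrak{P}_0\zeta^n = \zeta^n\mathfrak{P}_0 = \mathfrak{P}_0\zeta^n\mathfrak{P}_0$ for every $n \in \mathbb{Z}$; this settles the set-theoretic part of (2) once we know $\text{supp}(\alpha^n) = \mathfrak{P}_0\zeta^n$.

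For the base case $n=1$ the statement is the definition of $\alpha$. For the inductive step with $n \geqslant 1$, assume $\text{supp}(\alpha^n) = \mathfrak{P}_0\zeta^n$ and $\alpha^n(\zeta^n) = 1_{V^{\vee}}$. Apply Lemma \ref{convolution_lemma} with $\phi_1 = \alpha^n$, $\phi_2 = \alpha$, $z_1 = \zeta^n$, $z_2 = \zeta$, $c_1 = c_2 = 1$: this yields
\[
\alpha^{n+1}(\zeta^{n+1}) = (\alpha^n * \alpha)(\zeta^n \cdot \zeta) = \alpha^n(\zeta^n)\alpha(\zeta) = 1_{V^{\vee}},
\]
which establishes (1) at stage $n+1$. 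The discussion immediately following Lemma \ref{convolution_lemma} (applied to the same pair $\alpha^n, \alpha$) then forces $\text{supp}(\alpha^{n+1}) = \mathfrak{P}_0\zeta^{n+1}$, because the value $1_{V^{\vee}}$ at $\zeta^{n+1}$ is nonzero, so the support cannot be empty, and it is contained in $\mathfrak{P}_0\zeta^{n+1}\mathfrak{P}_0 = \mathfrak{P}_0\zeta^{n+1}$. This closes the induction for positive $n$; the case $n = 0$ is immediate since $\alpha^0$ is the unit of $\mathcal{H}(L,\rho_0)$, which is the function supported on $\mathfrak{P}_0$ with value $1_{V^{\vee}}$ at the identity.

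For negative $n$, I would use the invertibility of $\alpha$ established in the paragraph preceding the definition of $\alpha$: the lemma shows that any $\phi_1$ supported on a single coset $\mathfrak{P}_0 z_1$ with nonzero scalar value at $z_1$ admits an inverse $\phi_1^{-1}$ with $\text{supp}(\phi_1^{-1}) = \mathfrak{P}_0 z_1^{-1}$ and $\phi_1^{-1}(z_1^{-1}) = c_1^{-1} 1_{V^{\vee}}$. Applied to $\alpha$, this gives $\text{supp}(\alpha^{-1}) = \mathfrak{P}_0 \zeta^{-1}$ and $\alpha^{-1}(\zeta^{-1}) = 1_{V^{\vee}}$. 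Then the same inductive argument as above, using $\alpha^{-1}$ in place of $\alpha$, yields (1) and (2) for all negative $n$.

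No serious obstacle is expected: the centrality of $\zeta$ trivializes the coset combinatorics, and Lemma \ref{convolution_lemma} exactly computes the one relevant convolution value. The only mild point of care is to ensure that at each step the support of the convolution is actually the single coset $\mathfrak{P}_0\zeta^{n+1}$ and not a proper subset of it, but this follows because the value at $\zeta^{n+1}$ is nonzero together with the $\mathfrak{P}_0$-biequivariance property defining elements of $\mathcal{H}(L,\rho_0)$.
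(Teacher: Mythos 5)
Your proof is correct and follows essentially the same route as the paper, whose own argument is simply ``apply Lemma \ref{convolution_lemma} over and over''; you have merely made the implicit induction (and the reduction of negative exponents to the invertibility of $\alpha$, plus the use of centrality of $\zeta$ for the coset identities) explicit.
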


\begin{proof}
Using lemma \ref{convolution_lemma} over and over we get,  $\alpha^n(\zeta^n) = (\alpha(\zeta))^n$ for $n \in \mathbb{Z}$ and $\text{supp}(\alpha^n)= \mathfrak{P}_0 \zeta^n \mathfrak{P}_0= \mathfrak{P}_0 \zeta^n = \zeta^n \mathfrak{P}_0$ for $n \in \mathbb{Z}$
\end{proof}\par

We know that $\mathcal{H}(L,\rho_0)$ consists of $\mathbb{C}$-linear combinations of maps $f \colon L \longrightarrow End_{\mathbb{C}}(V^{\vee})$ such that each map $f$ is supported on $\mathfrak{P}_0x\mathfrak{P}_0$ where $x \in \mathfrak{P}_0 \zeta^n$ with $n \in \mathbb{Z}$ and $f(pxp')=\rho_0^{\vee}(p)f(x)\rho_0^{\vee}(p')$ for $p,p' \in \mathfrak{P}_0$. So from Proposition ~\ref{pro_20}, $\mathcal{H}(L,\rho_0)$ is generated as a $\mathbb{C}$-algebra by $\alpha$ and $\alpha^{-1}$. Hence $\mathcal{H}(L,\rho_0)= \mathbb{C}[\alpha, \alpha^{-1}]$.\par 

\begin{proposition}\label{algebra_isomorphism_1}
	The unique algebra homomorphism $\mathbb{C}[x, x^{-1}] \longrightarrow \mathbb{C}[\alpha, \alpha^{-1}]$ given by $x \longrightarrow \alpha$ is an isomorphism. So $\mathbb{C}[\alpha, \alpha^{-1}] \simeq \mathbb{C}[x, x^{-1}]$. 
\end{proposition}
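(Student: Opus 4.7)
The plan is to mirror the proof of Proposition~\ref{algebra_isomorphism} essentially verbatim, with $\alpha$ playing the role of $\psi$ and the cosets $\mathfrak{P}_0\zeta^n$ playing the role of the double cosets $\mathfrak{P}\zeta^n\mathfrak{P}$. The map $\Phi \colon \mathbb{C}[x, x^{-1}] \longrightarrow \mathbb{C}[\alpha, \alpha^{-1}]$ sending $x \mapsto \alpha$ is a well-defined algebra homomorphism by the universal property of the Laurent polynomial ring (it suffices that $\alpha$ is a unit in $\mathcal{H}(L,\rho_0)$, which we already established when producing $\alpha^{-1}$). Surjectivity is immediate from the fact, established just before the statement, that $\mathcal{H}(L,\rho_0) = \mathbb{C}[\alpha, \alpha^{-1}]$, so every element is a $\mathbb{C}$-linear combination of integer powers of $\alpha$.

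The content is in the injectivity. Suppose $\sum_{n=k}^{l} c_n \alpha^n = 0$ in $\mathcal{H}(L,\rho_0)$ with $c_k, \ldots, c_l \in \mathbb{C}$ and $k \leqslant l$ in $\mathbb{Z}$. By Proposition~\ref{pro_20}(2) the support of $\alpha^n$ is precisely $\mathfrak{P}_0\zeta^n$, and since distinct powers of $\zeta$ lie in distinct cosets of $\mathfrak{P}_0$ in $Z(L)\mathfrak{P}_0 = \coprod_{n \in \mathbb{Z}} \mathfrak{P}_0\zeta^n$, these supports are pairwise disjoint. Fix $s$ with $k \leqslant s \leqslant l$ and evaluate at $x = \zeta^s$: every term $c_n \alpha^n(\zeta^s)$ vanishes for $n \ne s$, while by Proposition~\ref{pro_20}(1) one has $\alpha^s(\zeta^s) = 1_{V^{\vee}} \ne 0$. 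Hence $c_s = 0$, and running $s$ over $\{k, k+1, \ldots, l\}$ shows that $\ker \Phi = 0$.

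There is no genuine obstacle here; the only ingredient beyond formal nonsense is the disjointness of the supports of the powers $\alpha^n$, which is supplied by Proposition~\ref{pro_20}. The argument is identical in spirit to Proposition~\ref{algebra_isomorphism}, except that the Bruhat--Tits (B-N pair) length computation used there to identify $\mathfrak{P}\zeta^n\mathfrak{P}$ is replaced by the trivial observation that $\zeta$ has infinite order modulo $\mathfrak{P}_0$.
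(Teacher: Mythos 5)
Your proof is correct and follows essentially the same approach the paper takes for the analogous Proposition~\ref{algebra_isomorphism}: surjectivity from the spanning set $\{\alpha^n\}$, and injectivity by evaluating a putative linear relation at $\zeta^s$ and using that the supports $\mathfrak{P}_0\zeta^n$ are pairwise disjoint (Proposition~\ref{pro_20}). Your version is marginally cleaner in that you evaluate directly at $\zeta^s$ for any $s\in\mathbb{Z}$, avoiding the paper's separate treatment of the negative-exponent case via multiplication by $\psi^{-k}$, but this is a cosmetic difference, not a different argument.
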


We have already shown before in sections 6.1 and 6.2 that $\mathcal{B}= \mathbb{C}[\psi,\psi^{-1}]$ is a sub-algebra of $\mathcal{A}= \mathcal{H}(G,\rho)$, where $\psi$ is supported on $\mathfrak{P}\zeta\mathfrak{P}$ and $\mathcal{B} \cong \mathbb{C}[x,x^{-1}]$. As $\mathcal{H}(L,\rho_0)= \mathbb{C}[\alpha,\alpha^{-1}]\cong \mathbb{C}[x,x^{-1}]$, so  $\mathcal{B} \cong \mathcal{H}(L,\rho_0)$ as $\mathbb{C}$-algebras. Hence $\mathcal{H}(L,\rho_0)$ can be viewed as a sub-algebra of $\mathcal{H}(G,\rho)$.\par 

Now we would like to find out how simple $\mathcal{H}(L,\rho_0)$-modules look like. Thus to understand them we need to find out how simple $\mathbb{C}[x,x^{-1}]$-modules look like.

\section{Calculation of simple $\mathcal{H}(L,\rho_0)$-modules} \label{sec_8}

Recall that $\mathcal{H}(L,\rho_0) = \mathbb{C}[\alpha, \alpha^{-1}]$. Note that $\mathbb{C}[\alpha, \alpha^{-1}] \cong \mathbb{C}[x,x^{-1}]$ as $\mathbb{C}$-algebras. It can be shown by direct calculation that the simple $\mathbb{C}[x,x^{-1}]$-modules are of the form $\mathbb{C}_{\lambda}$ for $\lambda \in \mathbb{C}^{\times}$, where $\mathbb{C}_{\lambda}$ is the vector space $\mathbb{C}$ with the $\mathbb{C}[x,x^{-1}]$-module structure given by $x.z=\lambda z$ for $z \in \mathbb{C}_{\lambda}$. \par

So the distinct simple $\mathcal{H}(L,\rho_0)$-modules(up to isomorphism)  are the various $\mathbb{C}_{\lambda}$ for $\lambda \in \mathbb{C}^{\times}$. The module structure is determined by $\alpha.z = \lambda z$ for $z \in \mathbb{C}_{\lambda}$.

\section{Final calculations to answer the question} \label{sec_9}

\subsection{Calculation of $\delta_P(\zeta)$}

Let us recall the modulus character $\delta_P \colon P \longrightarrow \mathbb{R}^{\times}_{>0}$ introduced in section 1. The character $\delta_P$ is given by $\delta_P(p)=\|det(Ad \, p)|_{\mathrm{Lie} \, U}\|_F$ for $p \in P$, where $\mathrm{Lie} \, U$ is the Lie algebra of $U$. We have

\[
U= \Bigg\lbrace
\begin{bmatrix} 
1_n & X\\
0  & 1_n
\end{bmatrix}
\mid X \in \mathrm{M}_n(E),X+^t{\overline{X}}=0
\Bigg\rbrace,
\]

\[
\mathrm{Lie} \, U= \Bigg\lbrace
\begin{bmatrix} 
0 &  X\\
0  & 0 
\end{bmatrix}
\mid X \in \mathrm{M}_n(E),X+^t{\overline{X}}=0
\Bigg\rbrace.
\]

\subsubsection{Unramified case:}

Recall $\zeta=
\begin{bmatrix}
\varpi_E 1_n & 0\\
0 & \varpi^{-1}_E 1_n
\end{bmatrix}$ in the unramified case. So

\[
(Ad \, \zeta)
\begin{bmatrix}
0 & X \\
0 & 0
\end{bmatrix}= \zeta 
\begin{bmatrix}
0 & X \\
0 & 0
\end{bmatrix}\zeta^{-1}=
\begin{bmatrix}
0 & \varpi_E^2 X\\
0 & 0
\end{bmatrix}.
\]
Hence
\begin{align*}
\delta_P(\zeta)&=\|det(Ad \, \zeta)|_{\mathrm{Lie} \, U}\|_F\\
&= \| \varpi_E^{2(dim_F(\mathrm{Lie} \, U))}\|_F\\
&=\|\varpi_E^{2n^2} \|_F\\
&=\|\varpi_F^{2n^2} \|_F\\
&= q^{-2n^2}.
\end{align*}

\subsubsection{Ramified case:}

Recall $\zeta=
\begin{bmatrix}
\varpi_E 1_n & 0\\
0 & -\varpi^{-1}_E 1_n
\end{bmatrix}$ in the ramified case and that analogously as in
unramified case we obtain $\delta_P(\zeta)= q^{-n^2}$. 

\subsection{Understanding the map $T_P$}

Let us denote the set of strongly $(\mathfrak{P},P)$-positive elements by $\mathcal{I}^{+}$. Thus

\[
\mathcal{I}^{+}=\{x \in L \mid x\mathfrak{P}_{+}x^{-1}\subseteq\mathfrak{P}_{+}, x^{-1}\mathfrak{P}_{-}x \subseteq \mathfrak{P}_{-} \}.
\] \par

where $\mathfrak{P}_{+}=\mathfrak{P} \cap U, \mathfrak{P}_{-}= \mathfrak{P} \cap \overline{U}$. We have
\[
\mathcal{H}^{+}(L, \rho_0)= \{f \in \mathcal{H}(L, \rho_0) \mid \text{supp}f \subseteq \mathfrak{P}_0\mathcal{I}^{+}\mathfrak{P}_0 \}.
\]\par

Note $\zeta \in \mathcal{I}^{+}$, so $\mathcal{H}^{+}(L, \rho_0)= \mathbb{C}[\alpha]$. The following discussion is taken from pages 612-619 in \cite{MR1643417}. Let $W$ be space of $\rho_0$. Let $f \in \mathcal{H}^{+}(L, \rho_0)$ with support of $f$ being $\mathfrak{P}_0 x\mathfrak{P}_0$ for $x \in  \mathcal{I}^{+}$. The map $F \in \mathcal{H}(G, \rho)$ is supported on $\mathfrak{P} x \mathfrak{P}$ and $f(x)=F(x)$. The algebra embedding 
\[T^{+} \colon \mathcal{H}^{+}(L, \rho_0) \longrightarrow \mathcal{H}(G, \rho)\] is given by $T^{+}(f)=F$.\par

Recall support of $\alpha \in \mathcal{H}^{+}(L, \rho_0)$ is $\mathfrak{P}_0 \zeta$. Let $T^{+}(\alpha)= \psi$, where $\psi \in \mathcal{H}(G, \rho)$ has support $\mathfrak{P} \zeta \mathfrak{P}$ and $\alpha(\zeta)=\psi(\zeta)=1_{W^{\vee}}$. As $T^{+}(\alpha)= \psi$ is invertible, so from Proposition ~\ref{pro_3} we can conclude that $T^{+}$ extends to an embedding of algebras  
\[t \colon \mathcal{H}(L, \rho_0) \longrightarrow \mathcal{H}(G, \rho).\]

Let $\phi \in \mathcal{H}(L, \rho_0)$  and $m \in \mathbb{N}$ is chosen such that $\alpha^m \phi \in \mathcal{H}^{+}(L, \rho_0)$. The map $t$ is then given by $t(\phi)= \psi^{-m}T^{+}(\alpha^m \phi)$. For  $\phi \in \mathcal{H}(L,\rho_0)$, the map 
\[t_P \colon \mathcal{H}(L,\rho_0) \longrightarrow \mathcal{H}(G, \rho)\] is given by $t_P(\phi)= t(\phi \delta_P)$, where $\phi \delta_P \in \mathcal{H}(L,\rho_0)$ and is the map \[\phi \delta_P \colon L \longrightarrow End_{\mathbb{C}}(\rho_0^{\vee})\] given by $(\phi \delta_P)(l)=\phi(l)\delta_P(l)$ for $l \in L$. As $\alpha \in \mathcal{H}(L, \rho_0)$ we have
\begin{align*}
t_P(\alpha)(\zeta) &= t(\alpha \delta_P)(\zeta)\\
&= T^{+}(\alpha \delta_P)(\zeta)\\
&= \delta_P(\zeta)T^{+}(\alpha)(\zeta)\\
&= \delta_P(\zeta)\psi(\zeta)\\
&= \delta_P(\zeta)1_{W^{\vee}}.
\end{align*}\par

Let $\mathcal{H}(L, \rho_0)$-Mod denote the category of $\mathcal{H}(L, \rho_0)$-modules and  $\mathcal{H}(G, \rho)$-Mod denote the category of $\mathcal{H}(G, \rho)$-modules. The map $t_P$ induces a functor $(t_P)_{*}$ given by

\[(t_P)_{*} \colon \mathcal{H}(L, \rho_0)-\text{Mod} \longrightarrow \mathcal{H}(G, \rho)-\text{Mod}.\] 

For $M$ an $\mathcal{H}(L, \rho_0)$-module,

\[(t_P)_{*}(M)= \mathrm{Hom}_{\mathcal{H}(L, \rho_0)}(\mathcal{H}(G, \rho),M)\] where $\mathcal{H}(G, \rho)$ is viewed as a $\mathcal{H}(L, \rho_0)$-module via $t_P$. The action of $\mathcal{H}(G,\rho)$ on $(t_P)_*(M)$ is given by 

\[h'\psi(h_1)=\psi(h_1 h')\] where $\psi \in (t_P)_*(M), h_1, h' \in \mathcal{H}(G, \rho)$.\par

Let $\tau \in \mathfrak{R}^{[L,\pi]_L}(L)$ then functor $m_L \colon \mathfrak{R}^{[L,\pi]_L}(L) \longrightarrow \mathcal{H}(L,\rho_0)-Mod$ is given by $m_L(\tau)= \mathrm{Hom}_{\mathfrak{P}_0}(\rho_0, \tau)$. The functor $m_L$ is an equivalence of categories. Let $f \in m_L(\tau), \gamma \in \mathcal{H}(L,\rho_0)$ and $w \in W$. The action of $\mathcal{H}(L,\rho_0)$ on $m_L(\tau)$ is given by $(\gamma. f)(w)= \int_L \tau(l)f(\gamma^{\vee}(l^{-1})w)dl$. Here $\gamma^{\vee}$ is defined on $L$ by $\gamma^{\vee}(l^{-1})=\gamma(l)^{\vee}$ for $l \in L$. Let $\tau' \in \mathfrak{R}^{[L,\pi]_G}(G)$ then the functor $m_G \colon \mathfrak{R}^{[L,\pi]_G}(G) \longrightarrow \mathcal{H}(G,\rho)-Mod$ is given by $m_G(\tau')=\mathrm{Hom}_{\mathfrak{P}}(\rho, \tau')$. The functor  $m_G$ is an equivalence of categories. From Corollary 8.4 in \cite{MR1643417}, the functors $m_L, m_G, Ind_P^G, (t_P)_*$  fit into the following commutative diagram:

\[
\begin{CD}
\mathfrak{R}^{[L,\pi]_G}(G)    @>m_G>>    \mathcal{H}(G,\rho)-Mod\\
@AInd_P^GAA                                    @A(t_P)_*AA\\
\mathfrak{R}^{[L,\pi]_L}(L)    @>m_L>>     \mathcal{H}(L,\rho_0)-Mod
\end{CD}
\]\par

If $\tau \in \mathfrak{R}^{[L,\pi]_L}(L)$ then from the above commutative diagram, we see that $(t_P)_*(m_L(\tau)) \cong m_G(Ind_P^G \tau)$ as $\mathcal{H}(G, \rho)$-modules. Replacing $\tau$ by $(\tau \otimes \delta_P^{1/2})$ in the above expression, $(t_P)_*(m_L(\tau \otimes \delta_P^{1/2})) \cong m_G(Ind_P^G (\tau \otimes \delta_P^{1/2}))$ as $\mathcal{H}(G, \rho)$-modules. As $Ind_P^G(\tau \otimes \delta_P^{1/2})= \iota_P^G(\tau)$, we have $(t_P)_*(m_L(\tau \otimes \delta_P^{1/2})) \cong m_G(\iota_P^G(\tau))$ as $\mathcal{H}(G, \rho)$-modules.\par

Our aim is to find an algebra embedding $T_P \colon \mathcal{H}(L,\rho_0) \longrightarrow \mathcal{H}(G, \rho)$ such that the following diagram commutes:

\[
\begin{CD}
\mathfrak{R}^{[L,\pi]_G}(G)    @>m_G>>    \mathcal{H}(G,\rho)-Mod\\
@A\iota_P^GAA                                    @A(T_P)_*AA\\
\mathfrak{R}^{[L,\pi]_L}(L)    @>m_L>>     \mathcal{H}(L,\rho_0)-Mod
\end{CD}
\]\par

Let $\tau \in \mathfrak{R}^{[L,\pi]_L}(L)$ then $m_L(\tau) \in \mathcal{H}(L,\rho_0)$- Mod. The functor $(T_P)_*$ is defined as below:

\[
(T_P)_*(m_L(\tau))= \left\lbrace \psi \colon \mathcal{H}(G,\rho) \to m_L(\tau) \; \middle|  \;
\begin{varwidth}{\linewidth}
$h\psi(h_1)= \psi(T_P(h)h_1)$ where\\
$h \in \mathcal{H}(L,\rho_0),h_1 \in \mathcal{H}(G,\rho)$
\end{varwidth}
\right \rbrace.
\]\par

From the above commutative diagram, we see that $(T_P)_*(m_L(\tau)) \cong m_G(\iota_P^G(\tau))$ as $\mathcal{H}(G, \rho)$-modules. Recall that $(t_P)_*(m_L(\tau \otimes \delta_P^{1/2})) \cong m_G(\iota_P^G(\tau))$ as $\mathcal{H}(G, \rho)$-modules. Hence we have to find an algebra embedding  $T_P \colon \mathcal{H}(L,\rho_0) \longrightarrow \mathcal{H}(G, \rho)$ such that 
$(T_P)_*(m_L(\tau)) \cong (t_P)_*(m_L(\tau \otimes \delta_P^{1/2}))$ as $\mathcal{H}(G, \rho)$-modules.\par

\begin{proposition}\label{pro_25}
	The map $T_P$ is given by $T_P(\phi)= t_P(\phi \delta_P^{-1/2})$ for $\phi \in \mathcal{H}(L, \rho_0)$ so that we have $(T_P)_*(m_L(\tau))= (t_P)_*(m_L(\tau \otimes \delta_P^{1/2}))$ as $\mathcal{H}(G,\rho)$- modules.
\end{proposition}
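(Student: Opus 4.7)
The plan is to establish the statement in two independent parts: first that the formula $T_P(\phi)=t_P(\phi\delta_P^{-1/2})$ makes sense and defines an algebra embedding, and then that the module-theoretic identity $(T_P)_*(m_L(\tau))=(t_P)_*(m_L(\tau\otimes\delta_P^{1/2}))$ follows by a direct translation of the $\mathcal{H}(L,\rho_0)$-linearity condition through the character twist. The key observation throughout is that $\delta_P|_L$ is a positive real character of $L$ that is trivial on the compact subgroup $\mathfrak{P}_0$, so $\delta_P^{\pm 1/2}$ are well-defined characters trivial on $\mathfrak{P}_0$.

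For the first part, I would note that because $\delta_P^{-1/2}$ is trivial on $\mathfrak{P}_0$, multiplying any $\phi\in\mathcal{H}(L,\rho_0)$ by it preserves both the bi-$\mathfrak{P}_0$-equivariance and the compact support condition, so $\phi\delta_P^{-1/2}\in\mathcal{H}(L,\rho_0)$. A short convolution calculation using multiplicativity of $\delta_P^{-1/2}$ gives
\[
\bigl((\phi_1\delta_P^{-1/2})*(\phi_2\delta_P^{-1/2})\bigr)(l)
=\int_L\phi_1(m)\phi_2(m^{-1}l)\,\delta_P^{-1/2}(m)\delta_P^{-1/2}(m^{-1}l)\,dm
=\delta_P^{-1/2}(l)\,(\phi_1*\phi_2)(l),
\]
so $\phi\mapsto\phi\delta_P^{-1/2}$ is a $\mathbb{C}$-algebra automorphism of $\mathcal{H}(L,\rho_0)$. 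Composing with the algebra embedding $t_P$ already constructed in the excerpt yields the required algebra embedding $T_P$.

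For the second part, I would first identify the two source modules as vector spaces. Since $\delta_P^{1/2}$ is trivial on $\mathfrak{P}_0$, the restrictions of $\tau$ and $\tau\otimes\delta_P^{1/2}$ to $\mathfrak{P}_0$ coincide, so $m_L(\tau)=m_L(\tau\otimes\delta_P^{1/2})=\mathrm{Hom}_{\mathfrak{P}_0}(\rho_0,\tau)$ as vector spaces. Using the explicit action formula recalled earlier, and the identity $(\gamma\delta_P^{1/2})^{\vee}(l^{-1})=\delta_P^{1/2}(l)\gamma^{\vee}(l^{-1})$, a direct computation shows that the action of $\gamma\in\mathcal{H}(L,\rho_0)$ on $m_L(\tau\otimes\delta_P^{1/2})$ equals the action of $\gamma\delta_P^{1/2}$ on $m_L(\tau)$. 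Thus $m_L(\tau\otimes\delta_P^{1/2})$ is exactly $m_L(\tau)$ with its module structure twisted by $\gamma\mapsto\gamma\delta_P^{1/2}$.

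Combining these, a map $\psi\colon\mathcal{H}(G,\rho)\to m_L(\tau\otimes\delta_P^{1/2})$ lies in $(t_P)_*(m_L(\tau\otimes\delta_P^{1/2}))$ iff $\psi(t_P(h)h_1)=(h\delta_P^{1/2}).\psi(h_1)$ for all $h\in\mathcal{H}(L,\rho_0)$, $h_1\in\mathcal{H}(G,\rho)$, where the right-hand side now denotes the action on $m_L(\tau)$. Writing $h'=h\delta_P^{1/2}$, this condition becomes $\psi(T_P(h')h_1)=h'.\psi(h_1)$, which is precisely membership in $(T_P)_*(m_L(\tau))$. The right $\mathcal{H}(G,\rho)$-action is given by right translation $(h'\psi)(h_1)=\psi(h_1h')$ in both cases, so the identification is as $\mathcal{H}(G,\rho)$-modules. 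The main obstacle is really just careful bookkeeping with the contragredient convention $\gamma^{\vee}(l^{-1})=\gamma(l)^{\vee}$ and the placement of the character in the twisted action; once this is pinned down, every step is formal.
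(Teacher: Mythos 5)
Your proof is correct and follows essentially the same route as the paper: the central step is the identical computation showing that the action of $\gamma\in\mathcal{H}(L,\rho_0)$ on $m_L(\tau\otimes\delta_P^{1/2})$ coincides with the action of $\delta_P^{1/2}\gamma$ on $m_L(\tau)$, using $\delta_P^{1/2}(l)\gamma(l)^\vee=(\delta_P^{1/2}\gamma)(l)^\vee$. You additionally spell out, more explicitly than the paper, that twisting by $\delta_P^{-1/2}$ is an algebra automorphism of $\mathcal{H}(L,\rho_0)$ (so $T_P$ is indeed an embedding) and that the resulting module twist translates into the stated equality of the Hom-modules; these are helpful elaborations rather than a different argument.
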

\begin{proof}
	Let $W$ be space of $\rho_0$. The vector spaces for $m_L(\tau \delta_P^{1/2})$ and $m_L(\tau)$ are the same. Let $f \in m_L(\tau)= \mathrm{Hom}_{\mathfrak{P}_0}(\rho_0, \tau),\gamma \in \mathcal{H}(L,\rho_0)$ and $w \in W$. Recall the action of $\mathcal{H}(L,\rho_0)$ on $m_L(\tau)$ is given by
	\[(\gamma. f)(w)= \int_L \tau(l)f(\gamma^{\vee}(l^{-1})w)dl.\] \par
	
	Let $f' \in m_L(\tau \delta_P^{1/2})= \mathrm{Hom}_{\mathfrak{P}_0}(\rho_0, \tau\delta_P^{1/2}),\gamma \in \mathcal{H}(L,\rho_0)$ and $w \in W$. Recall the action of $\mathcal{H}(L,\rho_0)$ on $m_L(\tau \delta_P^{1/2})$ is given by  \[(\gamma. f')(w)= \int_L  (\tau\delta_P^{1/2})(l)f'(\gamma^{\vee}(l^{-1})w)dl= \int_L \tau(l)\delta_P^{1/2}(l)f'(\gamma^{\vee}(l^{-1})w)dl.\] Now $f'$ is a linear transformation from space of $\rho_0$ to space of $\tau\delta_P^{1/2}$. As $\delta_P^{1/2}(l) \in \mathbb{C}^{\times}$, so $\delta_P^{1/2}(l)f'(\gamma^{\vee}(l^{-1})w)=f'(\delta_P^{1/2}(l)\gamma^{\vee}(l^{-1})w)$. Hence we have
	
	\[(\gamma. f')(w)= \int_L \tau(l)f'(\delta_P^{1/2}(l)\gamma^{\vee}(l^{-1})w)dl=  \int_L \tau(l)f'(\delta_P^{1/2}(l)\gamma(l)^{\vee}w)dl.\] Further as $\delta_P^{1/2}(l) \in \mathbb{C}^{\times}$, so  $\delta_P^{1/2}(l)(\gamma(l))^{\vee}=(\delta_P^{1/2}\gamma)(l)^{\vee}$. Therefore
	\[(\gamma.f')(w)= \int_L \tau(l)f'((\delta_P^{1/2}\gamma)(l)^{\vee}w)dl= (\delta_P^{1/2}\gamma).f'(w).\] Hence we can conclude that the action of $\gamma \in \mathcal{H}(L, \rho_0)$ on $f' \in m_L(\tau\delta_P^{1/2})$ is same as the action of $\delta_P^{1/2}\gamma \in \mathcal{H}(L, \rho_0)$  on $f' \in m_L(\tau)$. So we have  $(T_P)_*(m_L(\tau))= (t_P)_*(m_L(\tau \otimes \delta_P^{1/2}))$ as $\mathcal{H}(G,\rho)$- modules.
\end{proof}\par

From Proposition ~\ref{pro_25}, $T_P(\alpha)=t_P(\alpha \delta_P^{-1/2})$. So we have

\begin{align*}
T_P(\alpha)&=t_P(\alpha \delta_P^{-1/2})\\
&= t(\alpha \delta_P^{-1/2}\delta_P)\\
&= t(\alpha \delta_P^{1/2})\\
&= T^+(\alpha \delta_P^{1/2}).
\end{align*}\par

Hence
\begin{align*}
T_P(\alpha)(\zeta)&= T^+(\alpha \delta_P^{1/2})(\zeta)\\
&=\delta_P^{1/2}(\zeta)T^+(\alpha)(\zeta)\\
&=\delta_P^{1/2}(\zeta)\alpha(\zeta)\\
&=\delta_P^{1/2}(\zeta)1_{W^{\vee}}.
\end{align*}

Thus $T_P(\alpha)(\zeta)= \delta_P^{1/2}(\zeta)1_{W^{\vee}}$ with $\text{supp}(T_P(\alpha))=\text{supp}(t_P(\alpha))=\mathfrak{P}\zeta\mathfrak{P}$.\par

\subsection{Calculation of $(\phi_0*\phi_1)(\zeta)$}
In this section we calculate $(\phi_0*\phi_1)(\zeta)$. Let $g_i= q^{-n/2}\phi_i$ for $i=0,1$ in the unramified case and $g_i= q^{-n/4}\phi_i$ for $i=0,1$ in the ramified case. Determining $(\phi_0*\phi_1)(\zeta)$ would be useful in showing $g_0*g_1= T_P(\alpha)$ in both ramified and unramified cases. From now on, we assume without loss of generality that $\text{vol}\mathfrak{P}_0= \text{vol}\mathfrak{P}_{-}= \text{vol}\mathfrak{P}_{+}=1$. Thus we have $\text{vol}\mathfrak{P}=1$.
\begin{lemma}\label{lem_10}
	$\text{supp}(\phi_0* \phi_1)=\mathfrak{P} \zeta \mathfrak{P}=\mathfrak{P}w_0 w_1\mathfrak{P}$. 
\end{lemma}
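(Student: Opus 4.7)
The plan is to show the inclusion $\text{supp}(\phi_0*\phi_1)\subseteq \mathfrak{P}\zeta\mathfrak{P}$ first, and then to rule out the possibility that the support is empty by invoking invertibility of $\phi_0$ and $\phi_1$.

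First I would apply Lemma \ref{lem_6} to the product $\phi_0*\phi_1$. Since $\phi_0$ is supported on $\mathfrak{P}w_0\mathfrak{P}$ and $\phi_1$ is supported on $\mathfrak{P}w_1\mathfrak{P}$, the lemma gives
\[
\text{supp}(\phi_0*\phi_1)\subseteq (\mathfrak{P}w_0\mathfrak{P})(\mathfrak{P}w_1\mathfrak{P})=\mathfrak{P}w_0\mathfrak{P}w_1\mathfrak{P}.
\]
Next I would use the $B$-$N$ pair structure recalled just after Lemma \ref{lem_6}: the equality $\mathfrak{P}w_0\mathfrak{P}w_1\mathfrak{P}=\mathfrak{P}w_0w_1\mathfrak{P}$ holds precisely when $\ell(w_0w_1)=\ell(w_0)+\ell(w_1)$. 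Since $w_0$ and $w_1$ are the two simple affine reflections generating the affine Weyl group and $\zeta=w_0w_1$ is a translation, this length additivity holds. Hence
\[
\text{supp}(\phi_0*\phi_1)\subseteq \mathfrak{P}w_0w_1\mathfrak{P}=\mathfrak{P}\zeta\mathfrak{P}.
\]

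Now because any nonzero element of $\mathcal{H}(G,\rho)$ whose support is contained in the single double coset $\mathfrak{P}\zeta\mathfrak{P}$ must by bi-$(\mathfrak{P},\rho)$-equivariance have support equal to $\mathfrak{P}\zeta\mathfrak{P}$, it suffices to verify that $\phi_0*\phi_1\neq 0$. But from Lemma \ref{lem_5} (in the unramified case with $n$ odd) or Lemma \ref{lem_7} (in the ramified case with $n$ even), both $\phi_0$ and $\phi_1$ are units in the algebra $\mathcal{A}=\mathcal{H}(G,\rho)$. Their product is therefore a unit, and in particular nonzero.

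Combining the two directions we conclude $\text{supp}(\phi_0*\phi_1)=\mathfrak{P}\zeta\mathfrak{P}=\mathfrak{P}w_0w_1\mathfrak{P}$. I expect no real obstacle here: all the work has already been done in Sections \ref{sec_6}, and in fact the statement is essentially extracted from the discussion there establishing $\text{supp}(\psi)=\mathfrak{P}\zeta\mathfrak{P}$ for $\psi=\phi_0\phi_1$. The only subtlety worth highlighting is that one must justify the length-additivity $\ell(w_0w_1)=\ell(w_0)+\ell(w_1)$ in the relevant affine Weyl group (equivalently, that $\mathfrak{P}w_0\mathfrak{P}w_1\mathfrak{P}$ is a single double coset rather than a union of two), which is the one place where the BN-pair structure is genuinely used.
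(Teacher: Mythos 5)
Your proposal is correct and follows the same overall skeleton as the paper's argument: upper bound via Lemma~\ref{lem_6}, collapse of the triple coset to a single double coset, and non-emptiness via invertibility. The one place you diverge from the paper's actual proof of Lemma~\ref{lem_10} is the middle step. To show $\mathfrak{P}w_0\mathfrak{P}w_1\mathfrak{P}=\mathfrak{P}w_0w_1\mathfrak{P}$, you invoke the general BN-pair length-additivity criterion ($\mathfrak{P}x\mathfrak{P}y\mathfrak{P}=\mathfrak{P}xy\mathfrak{P}$ iff $\ell(xy)=\ell(x)+\ell(y)$) together with the observation that $w_0,w_1$ are simple affine reflections. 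The paper instead gives a self-contained Iwahori factorization calculation: it records the conjugation relations $w_0\mathfrak{P}_{-}w_0^{-1}\subseteq\mathfrak{P}_{+}$, $w_0\mathfrak{P}_{0}w_0^{-1}=\mathfrak{P}_{0}$, $w_1^{-1}\mathfrak{P}_{+}w_1\subseteq\mathfrak{P}_{-}$, and then unwinds $\mathfrak{P}w_0\mathfrak{P}_{-}\mathfrak{P}_{0}\mathfrak{P}_{+}w_1\mathfrak{P}\subseteq\mathfrak{P}w_0w_1\mathfrak{P}$ directly, with the reverse inclusion trivial since $1\in\mathfrak{P}$. Both routes are valid (indeed, the paper uses your length-additivity argument itself in Section~\ref{sec_6} when first discussing $\psi=\phi_0\phi_1$); the explicit computation the paper gives in Lemma~\ref{lem_10} has the advantage of being self-contained and of not requiring one to verify that $(\mathfrak{P};w_0,w_1)$ really does carry the relevant BN-pair structure, which is a non-trivial input if one wants to be scrupulous. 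Your closing step — nonzero because $\phi_0,\phi_1$ are units by Lemmas~\ref{lem_5}/\ref{lem_7}, so their product is a unit — is also what the paper relies on, though it states the contradiction more tersely in the Lemma~\ref{lem_10} proof itself.
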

\begin{proof} 
	We first claim that $\text{supp}(\phi_0* \phi_1)\subseteq \mathfrak{P}w_0\mathfrak{P}w_1\mathfrak{P}$. Suppose $z \in \text{supp}(\phi_0* \phi_1)$ then $(\phi_0*\phi_1)(z)= \int_G \phi_0(zr^{-1})\phi_1(r)dr \neq 0$. This would imply that there exists an $r \in G$ such that $\phi_0(zr^{-1})\phi_1(r) \neq 0$. As $\phi_0(zr^{-1})\phi_1(r) \neq 0$, this means that $\phi_0(zr^{-1}) \neq 0,\phi_1(r) \neq 0 $. But $\phi_0(zr^{-1}) \neq 0$ would imply that $zr^{-1} \in \mathfrak{P}w_0\mathfrak{P}$ and $\phi_1(r) \neq 0$ would imply that $r \in \mathfrak{P}w_1\mathfrak{P}$. So $z= (zr^{-1})(r) \in (\mathfrak{P}w_0\mathfrak{P})(\mathfrak{P}w_1\mathfrak{P})=(\text{supp}\phi_0)(\text{supp}\phi_1)= \mathfrak{P}w_0\mathfrak{P}w_1\mathfrak{P}$. Hence $\text{supp}(\phi_0* \phi_1)\subseteq \mathfrak{P}w_0\mathfrak{P}w_1\mathfrak{P}$. Let us recall $\mathfrak{P}_0, \mathfrak{P}_{+}, \mathfrak{P}_{-}$.
	
	\begin{center}
		$ \mathfrak{P}_{0}=
		\Bigg\lbrace
		\begin{bmatrix}
		a & 0\\ 
		0 & {^t}{\overline{a}}{^{-1}}\\
		\end{bmatrix}\mid a \in \mathrm{GL}_n(\mathfrak{O}_E)
		\Bigg \rbrace ,$
		\end {center}

		\begin{center}
			$\mathfrak{P}_{+}=
			\Bigg\lbrace
			\begin{bmatrix}
			1_n & X\\ 
			0 & 1_n\\
			\end{bmatrix} \mid X\in \mathrm{M}_n(\mathfrak{O}_E), X + {^t}{\overline{X}}=0
			\Bigg \rbrace ,$	
			\end {center}

			\begin{center}
				$\mathfrak{P}_{-}=
				\Bigg\lbrace
				\begin{bmatrix}
				1_n & 0\\ 
				X & 1_n\\
				\end{bmatrix} \mid X \in \mathrm{M}_n(\mathbf{p}_E), X + {^t}{\overline{X}}=0
				\Bigg \rbrace .$			
				\end {center}\par			
				
				It is easy observe that $w_0\mathfrak{P}_{-}w_0^{-1} \subseteq \mathfrak{P}_{+}, w_0\mathfrak{P}_{0}w_0^{-1}=\mathfrak{P}_{0}, w_1^{-1}\mathfrak{P}_{+}w_1 \subseteq \mathfrak{P}_{-}$. Now we have
				
				\begin{align*}
				\mathfrak{P}w_0\mathfrak{P}w_1\mathfrak{P}&= \mathfrak{P}w_0\mathfrak{P}_{-}\mathfrak{P}_{0}\mathfrak{P}_{+}w_1\mathfrak{P}\\
				&=\mathfrak{P}w_0\mathfrak{P}_{-}w_0^{-1}w_0\mathfrak{P}_{0}w_0^{-1}w_0w_1w_1^{-1}\mathfrak{P}_{+}w_1\mathfrak{P}\\
				&\subseteq\mathfrak{P}\mathfrak{P}_{+}\mathfrak{P}_{0}w_0w_1\mathfrak{P}_{-}\mathfrak{P}\\
				&=\mathfrak{P}w_0w_1\mathfrak{P}\\
				&=\mathfrak{P}\zeta\mathfrak{P}.
				\end{align*}

				So $\mathfrak{P}w_0\mathfrak{P}w_1\mathfrak{P}\subseteq \mathfrak{P}w_0w_1\mathfrak{P}=\mathfrak{P}\zeta\mathfrak{P}$. On the contrary, as $1 \in \mathfrak{P}$, so $\mathfrak{P}\zeta\mathfrak{P}=\mathfrak{P}w_0w_1\mathfrak{P} \subseteq \mathfrak{P}w_0\mathfrak{P}w_1\mathfrak{P}$. Hence we have $\mathfrak{P}w_0\mathfrak{P}w_1\mathfrak{P}= \mathfrak{P}w_0w_1\mathfrak{P}=\mathfrak{P}\zeta\mathfrak{P}$. Therefore $\text{supp}(\phi_0*\phi_1) \subseteq \mathfrak{P}w_0\mathfrak{P}w_1\mathfrak{P}= \mathfrak{P}w_0w_1\mathfrak{P}=\mathfrak{P}\zeta\mathfrak{P}$. This implies $\text{supp}(\phi_0*\phi_1)= \varnothing$ or $\mathfrak{P}\zeta\mathfrak{P}$. But if $\text{supp}(\phi_0*\phi_1)= \varnothing$ then $(\phi_0*\phi_1)=0$ which is a contradiction. Thus $\text{supp}(\phi_0*\phi_1)=\mathfrak{P}\zeta\mathfrak{P}$. 
			\end{proof}
			For $r \in \mathbb{Z}$ let
			
			\begin{center}
				$K_{-,r}=
				\Bigg\lbrace
				\begin{bmatrix}
				1_n & 0\\ 
				X & 1_n\\
				\end{bmatrix} \mid X \in \mathrm{M}_n(\mathbf{p}_E^r), X + {^t}{\overline{X}}=0
				\Bigg \rbrace$,			
			\end{center}

			\begin{center}
				$K_{+,r}=
				\Bigg\lbrace
				\begin{bmatrix}
				1_n & X\\ 
				0 & 1_n\\
				\end{bmatrix} \mid X \in \mathrm{M}_n(\mathbf{p}_E^r), X + {^t}{\overline{X}}=0
				\Bigg \rbrace$.
			\end{center}\par
			\begin{proposition}\label{prop_35}
				$(\phi_0*\phi_1)(\zeta)=\phi_0(w_0)\phi_1(w_1).$	
			\end{proposition}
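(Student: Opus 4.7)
The plan is to show that the integrand in
\[
(\phi_0 * \phi_1)(\zeta) = \int_G \phi_0(\zeta y^{-1})\phi_1(y)\,dy
\]
is supported on the single left coset $\mathfrak{P}w_1$, after which the integral collapses at once to $\phi_0(w_0)\phi_1(w_1)$.

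First I would identify the support, which is $\mathfrak{P}w_1\mathfrak{P} \cap \mathfrak{P}w_0\mathfrak{P}\cdot\zeta$. Using $\zeta = w_0w_1$, I rewrite $\mathfrak{P}w_0\mathfrak{P}\zeta = (\mathfrak{P}w_0\mathfrak{P}w_0)\cdot w_1$. Since $\mathfrak{P}, w_0 \in K(0)$ and $K(0)$ is a group, $\mathfrak{P}w_0\mathfrak{P}w_0\subseteq K(0)$; and the Bruhat decomposition of the finite group $\mathsf{G} = K(0)/K_1(0)$ with respect to its Siegel parabolic $\mathsf{P}$ gives $K(0) = \mathfrak{P}\sqcup\mathfrak{P}w_0\mathfrak{P}$. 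Hence
\[
\mathfrak{P}w_0\mathfrak{P}\zeta \subseteq (\mathfrak{P}\cup\mathfrak{P}w_0\mathfrak{P})w_1 = \mathfrak{P}w_1\cup\mathfrak{P}w_0\mathfrak{P}w_1.
\]
By the length identity $\ell(\zeta) = \ell(w_0)+\ell(w_1)$ and the $(B,N)$-pair fact used in the proof of Lemma~\ref{lem_10}, $\mathfrak{P}w_0\mathfrak{P}w_1 \subseteq \mathfrak{P}w_0\mathfrak{P}w_1\mathfrak{P} = \mathfrak{P}\zeta\mathfrak{P}$, which is disjoint from $\mathfrak{P}w_1\mathfrak{P}$. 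Intersecting with $\mathfrak{P}w_1\mathfrak{P}$ therefore leaves only $\mathfrak{P}w_1$, and a direct check shows $\mathfrak{P}w_1$ is genuinely in the support (for $y = pw_1$, both $\phi_1(y)\ne 0$ and $\phi_0(\zeta y^{-1}) = \phi_0(w_0 p^{-1})\ne 0$).

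Parameterizing $y = pw_1$ with $p\in\mathfrak{P}$ and $dy = dp$, and using $\zeta w_1^{-1} = w_0$, the computation becomes
\[
(\phi_0 * \phi_1)(\zeta) = \int_{\mathfrak{P}} \phi_0(w_0 p^{-1})\phi_1(pw_1)\,dp = \int_{\mathfrak{P}} \phi_0(w_0)\rho^\vee(p^{-1})\rho^\vee(p)\phi_1(w_1)\,dp = \phi_0(w_0)\phi_1(w_1),
\]
by the intertwining rules for $\phi_0,\phi_1$, together with $\rho^\vee(p^{-1})\rho^\vee(p) = 1_{V^\vee}$ and $\mathrm{vol}(\mathfrak{P}) = 1$. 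The only nontrivial ingredient is the support computation in the previous paragraph, resting on the finite Bruhat decomposition $K(0) = \mathfrak{P}\sqcup\mathfrak{P}w_0\mathfrak{P}$ that was established implicitly in Section~\ref{sec_6} when proving the quadratic Hecke relation for $\phi_0$; once the support is restricted to one left coset, the rest is a few lines.
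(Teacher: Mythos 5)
Your overall plan — reduce to a support computation showing the integrand lives on a single left $\mathfrak{P}$-coset, then collapse the integral — is a reasonable one, and the final integration step is correct. But the support computation rests on the assertion
\[
K(0) \;=\; \mathfrak{P}\;\sqcup\;\mathfrak{P}w_0\mathfrak{P},
\]
which is false once $n\geq 2$, and this is where the argument breaks down. The paper only ever asserts the containment $K(0)\supseteq \mathfrak{P}\amalg\mathfrak{P}w_0\mathfrak{P}$ (and correspondingly $\mathsf{G}\supseteq \mathsf{P}\amalg\mathsf{P}w\mathsf{P}$); it was never claimed to be an equality, nor is it one. Since $\mathfrak{P}=r^{-1}(\mathsf{P})$, the double cosets $\mathfrak{P}\backslash K(0)/\mathfrak{P}$ are in bijection with $\mathsf{P}\backslash\mathsf{G}/\mathsf{P}$, and for the Siegel (maximal) parabolic $\mathsf{P}$ in the finite group $\mathsf{G}$ there are $n+1$ such double cosets, parametrized by the dimension of the intersection of two maximal isotropic subspaces — not $2$. (For instance, in $Sp_4(\mathbb{F}_q)$ the element
\[
g=\begin{bmatrix}1&0&0&0\\0&0&0&-1\\0&0&1&0\\0&1&0&1\end{bmatrix}
\]
sends the standard Lagrangian to one meeting it in a line, so $g\notin\mathsf{P}\cup\mathsf{P}w\mathsf{P}$, yet $g\in\mathsf{P}w\mathsf{P}w$.) Consequently $\mathfrak{P}w_0\mathfrak{P}w_0$ is not contained in $\mathfrak{P}\cup\mathfrak{P}w_0\mathfrak{P}$, and the decomposition of $\mathfrak{P}w_0\mathfrak{P}\zeta$ into $\mathfrak{P}w_1\cup\mathfrak{P}w_0\mathfrak{P}w_1$ that you build on it is not available. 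The ``middle'' double cosets between $\mathfrak{P}$ and $\mathfrak{P}w_0\mathfrak{P}$ are exactly the ones your argument silently discards.

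The conclusion you are aiming for — that the support of the integrand is exactly $\mathfrak{P}w_1$ — is correct, but to establish it you need to replace the high-level Bruhat-decomposition step with a genuine computation. This is precisely what the paper's proof does: it parametrizes $\mathfrak{P}w_0\mathfrak{P}/\mathfrak{P}$ by $K_{+,0}/K_{+,1}$ via Iwahori factorization, writes the convolution as a sum over these cosets, and then shows by a direct matrix-level argument (using the factorizations $\mathfrak{P}=K_{-,1}\mathfrak{P}_0 K_{+,0}$ and ${}^{w_1}\mathfrak{P}=K_{-,2}\mathfrak{P}_0 K_{+,-1}$, together with $K_{-,0}\cap\mathfrak{P}_0 K_{+,-1}=1$) that $w_0^{-1}x_+^{-1}\zeta\in\mathfrak{P}w_1\mathfrak{P}$ forces $x_+\in K_{+,1}$, so only the trivial coset survives. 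In your parametrization this would amount to showing directly that $\mathfrak{P}w_0\mathfrak{P}w_0\cap\mathfrak{P}w_1\mathfrak{P}w_1^{-1}\subseteq\mathfrak{P}$, which requires essentially the same Iwahori-factorization analysis; the cuspidality of $\rho_0$ is also what guarantees that the Hecke algebra sees only two double cosets, but it does not shrink the underlying Bruhat stratification of $K(0)$.
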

			\begin{proof}			
				From Lemma \ref{lem_10}, $\text{supp}(\phi_0* \phi_1)=\mathfrak{P} \zeta \mathfrak{P}=\mathfrak{P}w_0 w_1\mathfrak{P}$. So now let us consider
				\begin{align*}
				(\phi_0*\phi_1)(\zeta)&=(\phi_0*\phi_1)(w_0w_1)\\
				&=\int_G \phi_0(y)\phi_1(y^{-1}\zeta)dy\\
				&=\int_{\mathfrak{P}w_0\mathfrak{P}}\phi_0(y)\phi_1(y^{-1}\zeta)dy.
				\end{align*}
				
				We know that $\mathfrak{P}w_0\mathfrak{P}= \underset{z \in {\mathfrak{P}w_0\mathfrak{P}/\mathfrak{P}}}{\amalg}z\mathfrak{P}$. Let $y=zp \in z\mathfrak{P}$. So we have
				\begin{align*}
				\phi_0(y)\phi_1(y^{-1}\zeta)&=\phi_0(zp)\phi_1(p^{-1}z^{-1}\zeta)\\
				&= \phi_0(z)\rho^{\vee}(p)\rho^{\vee}(p^{-1})\phi_1(z^{-1}\zeta)\\
				&=\phi_0(z)\phi_1(z^{-1}\zeta).
				\end{align*}				
				Hence \[(\phi_0*\phi_1)(\zeta)= \underset{z \in {\mathfrak{P}w_0\mathfrak{P}/\mathfrak{P}}}{\sum}\phi_0(z)\phi_1(z^{-1}\zeta) \text{Vol}{\mathfrak{P}}=\underset{z \in {\mathfrak{P}w_0\mathfrak{P}/\mathfrak{P}}}{\sum}\phi_0(z)\phi_1(z^{-1}\zeta)\] 
				
				Let $\alpha \colon \mathfrak{P}/w_0\mathfrak{P}w_0^{-1} \cap \mathfrak{P} \longrightarrow \mathfrak{P}w_0\mathfrak{P}/\mathfrak{P}$ be the map given by $\alpha(x(w_0\mathfrak{P}w_0^{-1} \cap \mathfrak{P}))= xw_0\mathfrak{P}$ where $x \in \mathfrak{P}$. We can observe that the map $\alpha$ is bijective. So $\mathfrak{P}/w_0\mathfrak{P}w_0^{-1} \cap \mathfrak{P}$ is in bijection with $\mathfrak{P}w_0\mathfrak{P}/\mathfrak{P}$.\par
				
				Hence \[(\phi_0*\phi_1)(\zeta)=  \underset{x \in {\mathfrak{P}/w_0\mathfrak{P}w_0^{-1} \cap \mathfrak{P}}}{\sum}\phi_0(xw_0)\phi_1(w_0^{-1}x^{-1}\zeta).\]
				From Iwahori factorization of $\mathfrak{P}$ we have $\mathfrak{P}=\mathfrak{P}_{-}\mathfrak{P}_{0}\mathfrak{P}_{+}= K_{-,1}\mathfrak{P}_{0}K_{+,0}$. Therefore $w_0\mathfrak{P}w_0^{-1}= ^{w_0}\mathfrak{P}=^{w_0}K_{-,1} ^{w_0}\mathfrak{P}_{0} ^{w_0}K_{+,0}= K_{+,1}\mathfrak{P}_0K_{-,0}$. So $\mathfrak{P}_0 \cap w_0\mathfrak{P}w_0^{-1}= \mathfrak{P} \cap ^{w_0}\mathfrak{P}= K_{+,1}\mathfrak{P}_0K_{-,1}$. Let $\beta \colon \mathfrak{P}/w_0\mathfrak{P}w_0^{-1} \cap \mathfrak{P} \longrightarrow K_{+,0}/K_{+,1}$ be the map given by $\beta(x(\mathfrak{P}\cap ^{w_0}\mathfrak{P}))= x_{+}K_{+,1}$ where $x \in \mathfrak{P}$ and $x=x_{+}px_{-}, x_{+} \in \mathfrak{P}_{+}, p \in \mathfrak{P}_{0}, x_{-} \in \mathfrak{P}_{-}$. We can observe that the map $\beta$ is bijective. So $\mathfrak{P}/w_0\mathfrak{P}w_0^{-1} \cap \mathfrak{P}$ is in bijection with $K_{+,0}/K_{+,1}$.\par
				
				Therefore 
				\begin{align*}
				(\phi_0*\phi_1)(\zeta)&=\underset{x_{+} \in {K_{+,0}/K_{+,1}}}{\sum}\phi_0(x_{+}w_0)\phi_1(w_0^{-1}x_{+}^{-1}\zeta)\\ &=\underset{x_{+}\in{K_{+,0}/K_{+,1}}}{\sum}\rho^{\vee}(x_{+})\phi_0(w_0)\phi_1(w_0^{-1}x_{+}^{-1}\zeta).
				\end{align*}
				As $\rho^{\vee}$ is trivial on $\mathfrak{P}_{+}$ and $x_{+} \in \mathfrak{P}_{+}$ so we have
				\begin{center}
					$(\phi_0*\phi_1)(\zeta)= \underset{x_{+} \in {K_{+,0}/K_{+,1}}}{\sum}\phi_0(w_0)\phi_1(w_0^{-1}x_{+}^{-1}\zeta).$
				\end{center}				
				
				The terms in above summation which do not vanish are the ones for which $w_0^{-1}x_{+}^{-1}\zeta \in \mathfrak{P}w_1\mathfrak{P} \Longrightarrow x_{+}^{-1} \in w_0\mathfrak{P}w_1\mathfrak{P}\zeta^{-1}\Longrightarrow x_{+} \in \zeta\mathfrak{P}w_1^{-1}\mathfrak{P}w_0^{-1} \Longrightarrow w_0^{-1}x_{+}w_0 \in w_1\mathfrak{P}w_1^{-1}\mathfrak{P}$. It is clear $w_1\mathfrak{P}w_1^{-1}\mathfrak{P}= (^{w_1}\mathfrak{P})(\mathfrak{P})$. As $^{w_1}\mathfrak{P}= {}^{w_1}K_{-,1}^{w_1}\mathfrak{P}_0^{w_1}K_{+,0}= K_{-,2}\mathfrak{P}_0K_{+,-1}$, so $w_1\mathfrak{P}w_1^{-1}\mathfrak{P}= (^{w_1}\mathfrak{P})(\mathfrak{P})=   K_{-,2}\mathfrak{P}_0K_{+,-1}\mathfrak{P}_0K_{-,1}$. Hence we have $w_0^{-1}x_{+}w_0\in K_{-,2}\mathfrak{P}_0K_{+,-1}\mathfrak{P}_0K_{-,1} \Longrightarrow w_0^{-1}x_{+}w_0=k_{-}p_{0}k_{+}k'_{-}$ where $k_{-} \in K_{-,2}, k_{+} \in K_{+,-1}, k_{-}^{'}\in K_{-,1}, p_0 \in \mathfrak{P}_0$. Hence we have $p_{0}k_{+}= k_{-}^{-1}w_0^{-1}x_{+}w_0k_{-}^{'-1}$. Now as $w_0^{-1}x_{+}w_0 \in K_{-,0}, k_{-}^{-1} \in K_{-,2}, k_{-}^{'-1}\in K_{-,1}$, so $k_{-}^{-1}w_0^{-1}x_{+}w_0k_{-}^{'-1} \in K_{-,0}$ and $p_{0}k_{+} \in \mathfrak{P}_{0}K_{+,-1}$. But we know that $K_{-,0} \cap \mathfrak{P}_{0}K_{+,-1}= 1 \Longrightarrow p_{0}k_{+}= 1 \Longrightarrow w_0^{-1}x_{+}w_0=k_{-}k_{-}' \in K_{-,1} \Longrightarrow x_{+} \in w_0K_{-,1}w_0^{-1}= K_{+,1}$. As $x_{+} \in K_{+,1}$, so only the trivial coset contributes to the above summation. Hence \[(\phi_0*\phi_1)(\zeta)=\phi_0(w_0)\phi_1(w_0^{-1}\zeta)=\phi_0(w_0)\phi_1(w_1).\]
			\end{proof}
			
			\subsection{Relation between $g_0, g_1$ and $T_P(\alpha)$}
			
			\subsubsection{Unramified case:}
			
			Recall that $\mathcal{H}(G,\rho)=\langle \phi_0, \phi_1 \rangle$ where $\phi_0$ is supported on $\mathfrak{P}w_0\mathfrak{P}$ and $\phi_1$ is supported on $\mathfrak{P}w_1\mathfrak{P}$ respectively with $\phi_i^2= q^n+(q^n-1)\phi_i$ for $i=0,1$. In this section we show that $g_0*g_1= T_P(\alpha)$, where $g_i= q^{-n/2}\phi_i$ for $i=0,1$. 
			
			\begin{proposition}\label{pro_36}
				$g_0g_1= T_P(\alpha)$.
			\end{proposition}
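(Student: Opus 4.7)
The strategy is to verify $g_0 \ast g_1 = T_P(\alpha)$ by comparing supports and then the (scalar) values at $\zeta$. By Lemma~\ref{lem_10}, $\mathrm{supp}(g_0 \ast g_1) = \mathrm{supp}(\phi_0 \ast \phi_1) = \mathfrak{P}\zeta\mathfrak{P}$, which also equals $\mathrm{supp}(T_P(\alpha))$ by the construction of $T_P = T^{+}(\,\cdot\,\delta_P^{1/2})$ from the preceding subsection. Since conjugation by $\zeta$ centralises $\mathfrak{P}_0$ (a direct matrix computation) and $\rho_0$ is irreducible, Schur's lemma forces the value $f(\zeta)$ of any $f \in \mathcal{H}(G,\rho)$ supported on $\mathfrak{P}\zeta\mathfrak{P}$ to lie in $\mathbb{C}\cdot 1_{V^{\vee}}$; this subspace of functions is one-dimensional, so it suffices to match the two values at $\zeta$.

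The right-hand side is immediate: unwinding $T_P(\alpha) = T^{+}(\alpha\delta_P^{1/2})$ together with $\alpha(\zeta) = 1_{V^{\vee}}$ and $\delta_P(\zeta) = q^{-2n^2}$ from Section 9.1 gives $T_P(\alpha)(\zeta) = q^{-n^2}\,1_{V^{\vee}}$. For the left-hand side, Proposition~\ref{prop_35} yields $(g_0 \ast g_1)(\zeta) = q^{-n}\phi_0(w_0)\phi_1(w_1)$. So the identity reduces to
\begin{equation*}
\phi_0(w_0)\,\phi_1(w_1) \,=\, q^{\,n-n^2}\,1_{V^{\vee}}.
\end{equation*}

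To prove this last identity, I would evaluate the Hecke relation $\phi_i^2 = q^n + (q^n - 1)\phi_i$ at $1 \in G$. Since $w_i^2 = 1$ (so $\mathfrak{P}w_i\mathfrak{P} = \mathfrak{P}w_i^{-1}\mathfrak{P}$) and $1 \notin \mathfrak{P}w_i\mathfrak{P}$, we get $\phi_i^2(1) = q^n\,1_{V^{\vee}}$. Expanding this convolution via the decomposition $\mathfrak{P}w_i\mathfrak{P} = \coprod_{x} xw_i\mathfrak{P}$ and the transformation rule for $\phi_i$, and using $\mathrm{vol}(\mathfrak{P}) = 1$, gives $\phi_i(w_i)^2 = q^n/[\mathfrak{P}:\mathfrak{P}\cap w_i\mathfrak{P}w_i^{-1}] \cdot 1_{V^{\vee}}$. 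A direct count, using the Iwahori decomposition of $\mathfrak{P}$ and the reduction $\mathfrak{P}/\mathfrak{P}_1 \cong \mathsf{P}$, shows $[\mathfrak{P}:\mathfrak{P}\cap w_i\mathfrak{P}w_i^{-1}] = |\mathsf{U}| = q^{n^2}$ for both $i = 0$ and $i = 1$, whence $\phi_i(w_i)^2 = q^{\,n-n^2}\,1_{V^{\vee}}$. Since conjugation by both $w_0$ and $w_1$ induces the same involution $\iota\colon a \mapsto {}^t\overline{a}^{-1}$ of $\mathfrak{P}_0$ (direct calculation for $w_1$ using $w_1 = w_0\zeta$ and the fact that $\zeta$ centralises $\mathfrak{P}_0$), both $\phi_0(w_0)$ and $\phi_1(w_1)$ are scalar multiples of a common intertwiner $A\colon V^{\vee} \to V^{\vee}$ between $\rho_0^{\vee}$ and $(\rho_0^{\iota})^{\vee}$, unique up to scalar by Schur (using $n$ odd so that $\rho_0^\iota \simeq \rho_0$). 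Normalising $A$ so that $A^2 = 1_{V^{\vee}}$, we have $\phi_i(w_i) = \varepsilon_i\, q^{(n-n^2)/2}A$ for signs $\varepsilon_i \in \{\pm 1\}$, and hence $\phi_0(w_0)\phi_1(w_1) = \varepsilon_0\varepsilon_1\, q^{\,n-n^2}\,1_{V^{\vee}}$.

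The main obstacle is thus the sign compatibility $\varepsilon_0 = \varepsilon_1$: the operators $\phi_0(w_0)$ and $\phi_1(w_1)$ are defined via the two distinct finite-group isomorphisms $r$ and $r'$ of Section~\ref{sec_6}, so one must verify that they are scalar multiples of the same intertwiner $A$ with the same sign. This can be carried out by unwinding the construction of $\phi_i$ from the corresponding elements of $\mathcal{H}(\mathsf{G},\overline{\rho})$ and $\mathcal{H}(\mathsf{G}',\overline{\rho}')$; alternatively, one can compute $(\phi_0 \ast \phi_1)(\zeta)$ directly through a careful Bruhat-type decomposition of the coset $\mathfrak{P}w_0\mathfrak{P}$, which sidesteps the sign ambiguity by producing the product $\phi_0(w_0)\phi_1(w_1)$ as a single operator without passing through the quadratic relation.
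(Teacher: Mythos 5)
Your overall strategy is the same as the paper's: reduce the identity to a comparison of the scalar values at $\zeta$, compute $T_P(\alpha)(\zeta)=\delta_P^{1/2}(\zeta)\,1_{W^\vee}=q^{-n^2}\,1_{W^\vee}$, and use Proposition~\ref{prop_35} to reduce the left-hand side to $\phi_0(w_0)\phi_1(w_1)$. Your computation of $\phi_i(w_i)^2=q^{\,n-n^2}\,1_{W^\vee}$ by evaluating the quadratic relation at $1$ and counting $[\mathfrak{P}:\mathfrak{P}\cap w_i\mathfrak{P}w_i^{-1}]=q^{n^2}$ agrees with what the paper does (the paper phrases it as $\phi_i=\lambda_i\psi_i$ with $\lambda_i^2=q^n/\mathrm{vol}(\mathfrak{P}w_i\mathfrak{P})$, which is equivalent).

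The genuine gap in your proposal is exactly the sign compatibility you flag at the end: you reduce the claim to $\varepsilon_0=\varepsilon_1$ but do not prove it, offering only two sketches of possible resolutions. The first sketch (unwinding $r$ and $r'$) is indeed the right one and is quite short, so you should carry it out rather than leave it as an ``obstacle.'' Concretely: from the construction in Section~\ref{sec_6}, $\phi_0$ and $\phi_1$ are by definition the preimages under the algebra isomorphisms $r\colon\mathcal{H}(K(0),\rho)\to\mathcal{H}(\mathsf{G},\overline\rho)$ and $r'\colon\mathcal{H}(\eta K(0)\eta^{-1},\rho)\to\mathcal{H}(\mathsf{G},\overline\rho)$ of one and the same element $\phi\in\mathcal{H}(\mathsf{G},\overline\rho)$ (the unique element supported on $\mathsf{P}w\mathsf{P}$ with $\phi^2=q^n+(q^n-1)\phi$), and $r(w_0)=r'(w_1)=w$. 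Hence $\phi_0(w_0)=r(\phi_0)(r(w_0))=\phi(w)=r'(\phi_1)(r'(w_1))=\phi_1(w_1)$ as operators on $V^\vee$, with no sign ambiguity at all; the product is then $\phi(w)^2$ and the rest of your argument applies. Your second suggested resolution (a direct Bruhat-type computation) would not by itself resolve the sign: Proposition~\ref{prop_35} already carries out that decomposition and leaves you with the operator product $\phi_0(w_0)\phi_1(w_1)$, which is exactly where the ambiguity sits.

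A smaller point: the paper's own writeup normalizes $\psi_0(w_0)=\psi_1(w_1)=A$ at the outset and later flips signs to make the linear coefficient positive, which is the same subtlety in disguise; you are right to be suspicious of it. The observation above (that the construction already forces $\phi_0(w_0)=\phi_1(w_1)$) is the clean way to justify both your argument and the paper's.
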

			\begin{proof}
				Let us choose $\psi_i \in \mathcal{H}(G,\rho)$ for $i=0,1$ such that $\text{supp}(\psi_i)= \mathfrak{P}w_i\mathfrak{P}$ for $i=0,1$. So $\phi_i$ is a scalar multiple of $\psi_i$ for $i=0,1$. Hence $\phi_i = \lambda_i \psi_i$ where $\lambda_i \in \mathbb{C}^{\times}$ for $i=0,1$. Let $\psi_i(w_i)=A \in \mathrm{Hom}_{\mathfrak{P}\cap ^{w_i}\mathfrak{P}}(^{w_i}\rho^{\vee},\rho^{\vee})$ for $i=0,1$ and $W$ be the space of $\rho$. So $A^2= 1_{W^{\vee}}$. From Proposition ~\ref{prop_35}, we have $(\psi_0*\psi_1)(\zeta)= \psi_0(w_0)\psi_1(w_1)= A^2=1_{W^{\vee}}$. Now let $\psi_i$ satisfies the quadratic relation given by $\psi_i^2 = a\psi_i+b$ where $a,b \in \mathbb{R}$ for $i=0,1$. As 
				$\psi_i^2 = a\psi_i+b \Longrightarrow (-\psi_i)^2 = (-a)(-\psi_i)+b$, so $a$ can be arranged such that $a>0$. We can see that $1 \in {\mathcal{H}(G,\rho)}$ is defined as below:
				
				\[
				1(x)=
				\begin{cases}
				0,   &\text{if $x \notin \mathfrak{P}$;}\\
				\rho^{\vee}(x) &\text{if $x \in \mathfrak{P}$.}
				\end{cases}
				\]
				
				Let us consider $\psi_i^2(1) = \int_G \psi_i(y)\psi_i(y^{-1})dy$ for $i=0,1$. Now let $y=pw_ip'$ where $p,p' \in \mathfrak{P}$ for $i=0,1$. So we have
				
				\begin{align*}
				\psi_i^2(1) &= \int_{\mathfrak{P}w_i\mathfrak{P}}\psi_i(pw_ip')\psi_i(p^{'-1}w_i^{-1}p^{-1})d(pw_ip')\\
				&=\int_{\mathfrak{P}w_i\mathfrak{P}}\rho^{\vee}(p)\psi_i(w_i)\rho^{\vee}(p')\rho^{\vee}(p^{'-1})\psi_i(w_i^{-1})\rho^{\vee}(p^{-1})d(pw_ip')\\
				&=\int_{\mathfrak{P}w_i\mathfrak{P}}\rho^{\vee}(p)\psi_i(w_i)\psi_i(w_i^{-1})\rho^{\vee}(p^{-1})d(pw_ip')\\
				&=\int_{\mathfrak{P}w_i\mathfrak{P}}\rho^{\vee}(p)\psi_i(w_i)\psi_i(w_i)\rho^{\vee}(p^{-1})d(pw_ip')\\
				&=\int_{\mathfrak{P}w_i\mathfrak{P}}\rho^{\vee}(p)A^2\rho^{\vee}(p^{-1})d(pw_ip')\\
				&=\int_{\mathfrak{P}w_i\mathfrak{P}}A^2\rho^{\vee}(p)\rho^{\vee}(p^{-1})d(pw_ip')\\
				&=A^2\text{vol}(\mathfrak{P}w_i\mathfrak{P})\\
				&=1_{W^{\vee}}\text{vol}(\mathfrak{P}w_i\mathfrak{P}).
				\end{align*}\par
				
				So $\psi_i^2(1)=1_{W^{\vee}}\text{vol}(\mathfrak{P}w_i\mathfrak{P})$ for $i=0,1$. We already know that $\psi_i^2 = a\psi_i+b$ where $a,b \in \mathbb{R}$ and for $i=0,1$. Now evaluating the expression $\psi_i^2 = a\psi_i+b$ at 1, we have 
				$\psi_i^2(1) = a\psi_i(1)+b 1(1)$. We can see that $\psi_i(1)=0$ as support of $\psi_i$ is $\mathfrak{P}w_i\mathfrak{P}$ for $i=0,1$. We have seen before that $\psi_i^2(1) = 1_{W^{\vee}}\text{vol}(\mathfrak{P}w_i\mathfrak{P})$ for $i=0,1$ and as $1 \in \mathfrak{P}, 1(1)= \rho^{\vee}(1)=1_{W^{\vee}}$. So $\psi_i^2(1) = a\psi_i(1)+b 1(1) \Longrightarrow 1_{W^{\vee}}\text{vol}(\mathfrak{P}w_i\mathfrak{P})=1_{W^{\vee}}b$ for $i=0,1$. Comparing coefficients of $1_{W^{\vee}}$ on both sides of the equation $1_{W^{\vee}}\text{vol}(\mathfrak{P}w_i\mathfrak{P})=1_{W^{\vee}}b$ for $i=0,1$ we get
				
				\begin{center}
					$b=\text{vol}(\mathfrak{P}w_i\mathfrak{P})$. 
				\end{center}\par

				As $\phi_i= \lambda_i\psi_i$ for $i=0,1$, hence $\phi_i^2= \lambda_i^2\psi_i^2=\lambda_i^2(a\psi_i+b)= (\lambda_i a)(\lambda_i \psi_i)+ \lambda_i^2 b= (\lambda_i a)\phi_i+  \lambda_i^2 b$ for $i=0,1$. But $\phi_i^2= (q^n-1)\phi_i +q^n$ for $i=0,1$. So $\phi_i^2=(\lambda_i a)\phi_i+  \lambda_i^2 b = (q^n-1)\phi_i +q^n$ for $i=0,1$. As $\phi_i$ and $1$ are linearly independent, hence $\lambda_i a= (q^n-1)$ for $i=0,1$. Therefore $\lambda_i= \frac{q^n-1}{a}$ for $i=0,1$. As $a>0, a\in \mathbb{R}$, so $\lambda_i>0, \lambda_i \in \mathbb{R}$ for $i=0,1$. Similarly, as $\phi_i$ and $1$ are linearly independent, hence $\lambda_i^2 b= q^n \Longrightarrow \lambda_i^2= \frac{q^n}{b}$ for $i=0,1$.\par 
				
				Now $\mathfrak{P}w_i\mathfrak{P}= \underset{x \in \mathfrak{P}/\mathfrak{P} \cap ^{w_i}\mathfrak{P}}{\amalg} xw_i\mathfrak{P} \Longrightarrow \text{vol}(\mathfrak{P}w_i\mathfrak{P})= [\mathfrak{P}w_i\mathfrak{P}:\mathfrak{P}]\text{vol}\mathfrak{P}=[\mathfrak{P}w_i\mathfrak{P}:\mathfrak{P}]=[\mathfrak{P}:\mathfrak{P} \cap ^{w_i}\mathfrak{P}]$ for $i=0,1$. Hence $b=\text{vol}(\mathfrak{P}w_i\mathfrak{P})=[\mathfrak{P}:\mathfrak{P} \cap ^{w_i}\mathfrak{P}]$ for $i=0,1$. Now as $\lambda_0^2= \lambda_1^2= \frac{q^n}{b} \Longrightarrow \lambda_0= \lambda_1= \frac{q^{n/2}}{b^{1/2}}=\frac{q^{n/2}}{[\mathfrak{P}:\mathfrak{P} \cap ^{w_0}\mathfrak{P}]^{1/2}}$. Therefore

				\begin{align*}
				\phi_0\phi_1 &=(\lambda_0 \psi_0)(\lambda_1\psi_1)\\
				&=\lambda_0^2\psi_0\psi_1\\
				&= \frac{q^n \psi_0 \psi_1}{[\mathfrak{P}:\mathfrak{P} \cap ^{w_0}\mathfrak{P}]}.
				\end{align*}\par
				
				We have seen before that, $\mathfrak{P}= K_{-,1}\mathfrak{P}_{0}K_{+,0}$ and $\mathfrak{P} \cap ^{w_0}\mathfrak{P}=K_{-,1}\mathfrak{P}_{0}K_{+,1}$. So
				
				\begin{align*}
				[\mathfrak{P}:\mathfrak{P} \cap ^{w_0}\mathfrak{P}]&=|\frac{K_{+,0}}{K_{+,1}}|\\
				&=|\lbrace X \in \mathrm{M}_n(k_E) \mid X+ ^t{\overline{X}}=0 \rbrace|\\
				&=(q^n)(q^2)^{\frac{(n)(n-1)}{2}}\\
				&=(q^n)(q^{n^2-n})\\
				&=q^{n^2}.
				\end{align*}\par
				
				Hence
				\begin{align*}
				(\phi_0\phi_1)(\zeta )&=\frac{q^n (\psi_0 \psi_1)(\zeta)}{[\mathfrak{P}:\mathfrak{P} \cap ^{w_0}\mathfrak{P}]}\\ 
				&=\frac{q^n (\psi_0 \psi_1)(\zeta)}{ q^{n^2}}\\
				&=q^{n-n^2}1_{W^{\vee}}.
				\end{align*}\par
				
				Recall $g_i = q^{-n/2}\phi_i$ for $i=0,1$. We know that $\phi_i^2= (q^n-1)\phi_i +q^n$ for $i=0,1$. So for $i=0,1$ we have
				
				\begin{align*}
				g_i^2 &= q^{-n}\phi_i^2\\
				&=q^{-n}((q^n-1)\phi_i +q^n)\\
				&=(1-q^{-n})\phi_i+1\\
				&=(1-q^{-n})q^{n/2}g_i+1\\
				&=(q^{n/2}- q^{-n/2})g_i+1.
				\end{align*}\par
				
				So $g_0g_1=(q^{-n/2}\phi_0)(q^{-n/2}\phi_21)= q^{-n}\phi_0\phi_1 \Longrightarrow (g_0g_1)(\zeta)= q^{-n}(\phi_0\phi_1)(\zeta)= q^{-n}q^{n-n^2}1_{W^{\vee}}= q^{-n^2}1_{W^{\vee}}$. From the earlier discussion in this section we have $T_P(\alpha)(\zeta)=\delta_P^{1/2}(\zeta)1_{W^{\vee}}$. From section 9.1, we have $\delta_P(\zeta)= q^{-2n^2}$. Hence $\delta_P^{1/2}(\zeta)= q^{-n^2}$. Therefore $(g_0g_1)(\zeta)= T_P(\alpha)(\zeta)$. So $(g_0g_1)(\zeta)= T_P(\alpha)(\zeta)$. We have $\text{supp}(T_P(\alpha))= \mathfrak{P}\zeta\mathfrak{P}$. As $\text{supp}(g_i)=\mathfrak{P}w_i\mathfrak{P}$, Lemma \ref{lem_10} gives $\text{supp}(g_0g_1)= \mathfrak{P}\zeta\mathfrak{P}$. Therefore $g_0g_1= T_P(\alpha)$.
			\end{proof}
			
			\subsubsection{Ramified case:}
			We know that $\mathcal{H}(G,\rho)=\langle \phi_0, \phi_1 \rangle$ where $\phi_0$ is supported on $\mathfrak{P}w_0\mathfrak{P}$ and $\phi_1$ is supported on $\mathfrak{P}w_1\mathfrak{P}$ respectively with $\phi_i^2= q^{n/2}+(q^{n/2}-1)\phi_i$ for $i=0,1$. In this section we show that $g_0*g_1= T_P(\alpha)$, where $g_i= q^{-n/4}\phi_i$ for $i=0,1$.
			
			\begin{proposition}\label{pro_38}
				$g_0g_1= T_P(\alpha)$.
			\end{proposition}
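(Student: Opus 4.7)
The proof adapts Proposition \ref{pro_36} (the unramified analog) to the ramified case; the broad strategy is identical, but two specific computations change.

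Following Proposition \ref{pro_36}, choose $\psi_i \in \mathcal{H}(G,\rho)$ with $\mathrm{supp}(\psi_i) = \mathfrak{P} w_i \mathfrak{P}$ and write $\phi_i = \lambda_i \psi_i$ for scalars $\lambda_i$. By Schur's lemma applied to the irreducible $\rho_0$, each $\psi_i(w_i)^2$ is a scalar in $\mathrm{End}(W^\vee)$ (for $i = 1$ one uses that $w_1^2 = -I_{2n}$ is central in $\mathfrak{P}$, so conjugation by it acts trivially on $\rho^\vee$). Normalize so that the products entering $(\psi_0 \psi_1)(\zeta)$ are positive. By Proposition \ref{prop_35},
\[
(\psi_0 \psi_1)(\zeta) = \psi_0(w_0)\psi_1(w_1),
\]
a scalar endomorphism of $W^\vee$ since $\zeta \in Z(L)$ and $\rho_0$ is irreducible on $\mathfrak{P}_0$. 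Expanding $\psi_i^2(1)$ as an integral over the double coset $\mathfrak{P} w_i \mathfrak{P}$ and comparing with the quadratic relation $\phi_i^2 = q^{n/2} + (q^{n/2}-1)\phi_i$ evaluated at $1$ will then determine $\lambda_i$.

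The first new computation is the determination of the indices $b_i := [\mathfrak{P} : \mathfrak{P} \cap {}^{w_i}\mathfrak{P}]$. In the ramified case, $k_E = \mathbb{F}_q$ has trivial Galois action and $\overline{\varpi}_E = -\varpi_E$, which affects both the skew-Hermitian condition on the reduction and its twist under $w_1$. Explicit calculation using the Iwahori factorizations of ${}^{w_0}\mathfrak{P}$ and ${}^{w_1}\mathfrak{P}$ gives
\[
b_0 = q^{n(n-1)/2}, \qquad b_1 = q^{n(n+1)/2}, \qquad b_0\,b_1 = q^{n^2},
\]
where the first count is the number of skew-symmetric $n \times n$ matrices over $\mathbb{F}_q$ and the second is the number of symmetric ones (the sign $\overline{\varpi}_E/\varpi_E = -1$ converts the skew-Hermitian condition into a symmetric one after reducing modulo $\varpi_E$). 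The second new feature is that $w_1^{-1} = (-I_{2n})w_1$, so $\psi_1(w_1^{-1}) = \rho^\vee(-I_{2n})\psi_1(w_1) = \theta(-1)\,\psi_1(w_1)$; the resulting sign enters $\psi_1^2(1)$ and is absorbed by a compatible choice of $\psi_1$.

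Running the calculation to its end yields $\lambda_0 \lambda_1 = q^{(n-n^2)/2}$, hence $(\phi_0 \phi_1)(\zeta) = q^{(n - n^2)/2}\cdot 1_{W^\vee}$ and
\[
(g_0 g_1)(\zeta) = q^{-n/2}(\phi_0 \phi_1)(\zeta) = q^{-n^2/2}\cdot 1_{W^\vee} = \delta_P^{1/2}(\zeta)\cdot 1_{W^\vee} = T_P(\alpha)(\zeta),
\]
using $\delta_P(\zeta) = q^{-n^2}$ from Subsection 9.1.2 and the formula for $T_P(\alpha)(\zeta)$ from Subsection 9.2. By Lemma \ref{lem_10}, $\mathrm{supp}(g_0 g_1) = \mathfrak{P}\zeta\mathfrak{P} = \mathrm{supp}(T_P(\alpha))$, and the bi-$\mathfrak{P}$-equivariance of both sides then forces $g_0 g_1 = T_P(\alpha)$. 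The main obstacle is handling the two differences from the unramified case in tandem: the distinct indices $b_0 \ne b_1$ (whose product conveniently equals $q^{n^2}$, making the final formula match) and the sign coming from $w_1^2 = -I_{2n}$.
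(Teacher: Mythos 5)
Your argument is correct and follows essentially the same route as the paper's proof: write $\phi_i = \lambda_i\psi_i$, evaluate $\psi_i^2$ at the identity to extract the indices $b_0 = q^{n(n-1)/2}$ (skew-symmetric) and $b_1 = q^{n(n+1)/2}$ (symmetric), handle the sign coming from $w_1^2 = -I_{2n}$, and match $(g_0g_1)(\zeta)$ with $\delta_P^{1/2}(\zeta)\,1_{W^\vee}$. The paper resolves the $\rho^\vee(-1)$ sign by observing that $\lambda_1 > 0$ forces $\rho^\vee(-1) = 1$, whereas you absorb it into the normalization of $\psi_1$; these are equivalent, and all the key numbers agree.
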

			\begin{proof}	
				Let us choose $\psi_i \in \mathcal{H}(G,\rho)$ for $i=0,1$ such that $\text{supp}(\psi_i)= \mathfrak{P}w_i\mathfrak{P}$ for $i=0,1$. So $\phi_i$ is a scalar multiple of $\psi_i$ for $i=0,1$. Hence $\phi_i = \lambda_i \psi_i$ where $\lambda_i \in \mathbb{C}^{\times}$ for $i=0,1$. Let $\psi_i(w_i)=A_i \in \mathrm{Hom}_{\mathfrak{P}\cap ^{w_i}\mathfrak{P}}(^{w_i}\rho^{\vee},\rho^{\vee})$ for $i=0,1$ and $W$ be the space of $\rho$. So $A_i^2= 1_{W^{\vee}}$ for $i=0,1$. From section 5.1 on page 24 in \cite{MR2276353}, we can say that $A_0= A_1$. From Proposition ~\ref{prop_35}, we have $(\psi_0*\psi_1)(\zeta)= \psi_0(w_0)\psi_1(w_1)= A_0A_1=A_0^2=1_{W^{\vee}}$. Now let $\psi_i$ satisfies the quadratic relation given by $\psi_i^2 = a_i\psi_i+b_i$ where $a_i,b_i \in \mathbb{R}$ for $i=0,1$. As $\psi_i^2 = a_i\psi_i+b_i \Longrightarrow (-\psi_i)^2 = (-a_i)(-\psi_i)+b_i$, so $a_i$ can be arranged such that $a_i>0$ for $i=0,1$. We can see that $1 \in {\mathcal{H}(G,\rho)}$ is defined as below:
				
				\[
				1(x)=
				\begin{cases}
				0,   &\text{if $x \notin \mathfrak{P}$;}\\
				\rho^{\vee}(x) &\text{if $x \in \mathfrak{P}$.}
				\end{cases}
				\]
				
			   As shown in Proposition~\ref{pro_36}, we can show that   $\psi_0^2(1)=1_{W^{\vee}}\text{vol}(\mathfrak{P}w_0\mathfrak{P})$. We already know that 
				$\psi_0^2 = a_0\psi_0+b_0$ where $a_0,b_0 \in \mathbb{R}$. Now evaluating the expression $\psi_0^2 = a_0 \psi_0+b_0$ at 1, we have 
				$\psi_0^2(1) = a_0\psi_0(1)+b_0 1(1)$. We can see that $\psi_0(1)=0$ as support of $\psi_0$ is $\mathfrak{P}w_0\mathfrak{P}$. We have seen before that $\psi_0^2(1) = 1_{W^{\vee}}\text{vol}(\mathfrak{P}w_0\mathfrak{P})$ and as $1 \in \mathfrak{P}, 1(1)= \rho^{\vee}(1)= 1_{W^{\vee}}$. So $\psi_0^2(1) = a_0\psi_i(1)+b_0 1(1) \Longrightarrow 1_{W^{\vee}}\text{vol}(\mathfrak{P}w_0\mathfrak{P})=1_{W^{\vee}}b_0$. Comparing  coefficients of $1_{W^{\vee}}$ on both sides of the equation $1_{W^{\vee}}b_0=1_{W^{\vee}}\text{vol}(\mathfrak{P}w_0\mathfrak{P})$ we get
				
				\begin{center}
					$b_0=\text{vol}(\mathfrak{P}w_0\mathfrak{P})$.
				\end{center}\par
				
				As $\phi_0= \lambda_0\psi_0$, hence $\phi_0^2= \lambda_0^2\psi_0^2=\lambda_0^2(a_0\psi_0+b_0)= (\lambda_0 a_0)(\lambda_0 \psi_0)+ \lambda_0^2 b_0= (\lambda_0 a_0)\phi_0+  \lambda_0^2 b_0$. But $\phi_0^2= (q^{n/2}-1)\phi_0 +q^{n/2}$. So $\phi_0^2=(\lambda_0 a_0)\phi_0+  \lambda_0^2 b_0= (q^{n/2}-1)\phi_0 +q^{n/2}$. As $\phi_0$ and $1$ are linearly independent, hence $\lambda_0 a_0= (q^{n/2}-1)$. Therefore $\lambda_0= \frac{q^{n/2}-1}{a_0}$. As $a_0>0, a_0\in \mathbb{R}$, so $\lambda_0>0, \lambda_0 \in \mathbb{R}$. Similarly, as $\phi_0$ and $1$ are linearly independent, hence $\lambda_0^2 b= q^{n/2} \Longrightarrow \lambda_0^2= \frac{q^{n/2}}{b_0}$. \par 
				
				Now $\mathfrak{P}w_0\mathfrak{P}= \underset{x \in \mathfrak{P}/\mathfrak{P} \cap ^{w_0}\mathfrak{P}}{\amalg} xw_0\mathfrak{P} \Longrightarrow \text{vol}(\mathfrak{P}w_0\mathfrak{P})= [\mathfrak{P}w_0\mathfrak{P}:\mathfrak{P}]\text{vol}\mathfrak{P}=[\mathfrak{P}w_0\mathfrak{P}:\mathfrak{P}]=[\mathfrak{P}:\mathfrak{P} \cap ^{w_0}\mathfrak{P}]$. Hence $b_0=\text{vol}(\mathfrak{P}w_0\mathfrak{P})=[\mathfrak{P}:\mathfrak{P} \cap ^{w_0}\mathfrak{P}]$. Now as $\lambda_0^2= \frac{q^{n/2}}{b_0} \Longrightarrow \lambda_0= \frac{q^{n/4}}{b_0^{1/2}}=\frac{q^{n/4}}{[\mathfrak{P}:\mathfrak{P} \cap ^{w_0}\mathfrak{P}]^{1/2}}$.\par
				
				We have seen before that, $\mathfrak{P}= K_{-,1}\mathfrak{P}_{0}K_{+,0}$ and $\mathfrak{P} \cap ^{w_0}\mathfrak{P}=K_{-,1}\mathfrak{P}_{0}K_{+,1}$. So
				
				\begin{align*}
				[\mathfrak{P}:\mathfrak{P} \cap ^{w_0}\mathfrak{P}]&=|\frac{K_{+,0}}{K_{+,1}}|\\
				&=|\lbrace X \in \mathrm{M}_n(k_E) \mid X+ ^t{\overline{X}}=0 \rbrace|\\
				&=q^{\frac{(n)(n-1)}{2}}\\
				&=q^{\frac{n^2-n}{2}}.
				\end{align*}\par

				So
				\begin{center}
					$\lambda_0=\frac{q^{n/4}}{[\mathfrak{P}:\mathfrak{P} \cap ^{w_0}\mathfrak{P}]^{1/2}}=\frac{q^{n/4}}{q^{\frac{n^2-n}{4}}}.$
				\end{center}\par
				
				Let us consider $\psi_1^2(1) = \int_G \psi_1(y)\psi_1(y^{-1})dy$. Now let $y=pw_1p'$ where $p,p' \in \mathfrak{P}$. So we have
				
				\begin{align*}
				\psi_1^2(1) &= \int_{\mathfrak{P}w_1\mathfrak{P}}\psi_1(pw_1p')\psi_1(p^{'-1}w_1^{-1}p^{-1})d(pw_1p')\\
				&=\int_{\mathfrak{P}w_1\mathfrak{P}}\rho^{\vee}(p)\psi_1(w_1)\rho^{\vee}(p')\rho^{\vee}(p^{'-1})\psi_1(w_1^{-1})\rho^{\vee}(p^{-1})d(pw_1p')\\
				&=\int_{\mathfrak{P}w_1\mathfrak{P}}\rho^{\vee}(p)\psi_1(w_1)\psi_1(w_1^{-1})\rho^{\vee}(p^{-1})d(pw_1p')\\
				&=\int_{\mathfrak{P}w_1\mathfrak{P}}\rho^{\vee}(p)\psi_1(w_1)\psi_1(-w_1)\rho^{\vee}(p^{-1})d(pw_1p')\\
				&=\int_{\mathfrak{P}w_1\mathfrak{P}}\rho^{\vee}(p)\psi_1(w_1)\rho^{\vee}(-1)\psi_1(w_1)\rho^{\vee}(p^{-1})d(pw_1p')\\
				&=\rho^{\vee}(-1)\int_{\mathfrak{P}w_1\mathfrak{P}}A_1^2\rho^{\vee}(p)\rho^{\vee}(p^{-1})d(pw_1p')\\
				&=\rho^{\vee}(-1)A_1^2\text{vol}(\mathfrak{P}w_1\mathfrak{P})\\
				&=\rho^{\vee}(-1)1_{W^{\vee}}\text{vol}(\mathfrak{P}w_1\mathfrak{P}).
				\end{align*}\par
				
				So $\psi_1^2(1)=1_{W^{\vee}}\text{vol}(\mathfrak{P}w_1\mathfrak{P})$. We already know that 
				$\psi_1^2 = a_1\psi_1+b_1$ where $a_1,b_1 \in \mathbb{R}$. Now evaluating the expression $\psi_1^2 = a_1 \psi_1+b_1$ at 1, we have 
				$\psi_1^2(1) = a_1\psi_1(1)+b_1 1(1)$. We can see that $\psi_1(1)=0$ as support of $\psi_1$ is $\mathfrak{P}w_1\mathfrak{P}$. We have seen before that $\psi_1^2(1) = 1_{W^{\vee}}\text{vol}(\mathfrak{P}w_1\mathfrak{P})$ and as $1 \in \mathfrak{P}, 1(1)= \rho^{\vee}(1)= 1_{W^{\vee}}$. So $\psi_1^2(1) = a_1\psi_i(1)+b_1 1(1) \Longrightarrow \rho^{\vee}(-1)1_{W^{\vee}}\text{vol}(\mathfrak{P}w_1\mathfrak{P})=1_{W^{\vee}}b_1$. Comparing coefficients of $1_{W^{\vee}}$ on both sides of the equation $1_{W^{\vee}}b_1=1_{W^{\vee}}\rho^{\vee}(-1)\text{vol}(\mathfrak{P}w_1\mathfrak{P})$ we get
				
				\begin{center}
					$b_1=\rho^{\vee}(-1)\text{vol}(\mathfrak{P}w_1\mathfrak{P})$.\par 
				\end{center}\par
				
				As $\phi_1= \lambda_1\psi_1$, hence $\phi_1^2= \lambda_1^2\psi_1^2=\lambda_1^2(a_1\psi_1+b_1)= (\lambda_1 a_1)(\lambda_1 \psi_1)+ \lambda_1^2 b_1 = (\lambda_0 a_1)\phi_1+  \lambda_1^2 b_1$. But $\phi_1^2= (q^{n/2}-1)\phi_1 +q^{n/2}$. So $\phi_1^2=(\lambda_1 a_1)\phi_1+  \lambda_1^2 b_1= (q^{n/2}-1)\phi_1 +q^{n/2}$. As $\phi_1$ and $1$ are linearly independent, hence $\lambda_1 a_1= (q^{n/2}-1)$. Therefore $\lambda_1= \frac{q^{n/2}-1}{a_1}$. As $a_1>0, a_1\in \mathbb{R}$, so $\lambda_1>0, \lambda_1 \in \mathbb{R}$. Similarly, as $\phi_1$ and $1$ are linearly independent, hence $\lambda_1^2 b= q^{n/2} \Longrightarrow \lambda_1^2= \frac{q^{n/2}}{b_1}$. \par 
				
				Now $\mathfrak{P}w_1\mathfrak{P}= \underset{x \in \mathfrak{P}/\mathfrak{P} \cap ^{w_1}\mathfrak{P}}{\amalg} xw_1\mathfrak{P} \Longrightarrow \text{vol}(\mathfrak{P}w_1\mathfrak{P})= [\mathfrak{P}w_1\mathfrak{P}:\mathfrak{P}]\text{vol}\mathfrak{P}=[\mathfrak{P}w_1\mathfrak{P}:\mathfrak{P}]=[\mathfrak{P}:\mathfrak{P} \cap ^{w_1}\mathfrak{P}]$. Hence $b_1=\text{vol}(\mathfrak{P}w_1\mathfrak{P})=[\mathfrak{P}:\mathfrak{P} \cap ^{w_1}\mathfrak{P}]$. Now as $\lambda_1^2= \frac{q^{n/2}}{b_1} \Longrightarrow \lambda_1= \frac{q^{n/4}}{b_1^{1/2}}=\frac{q^{n/4}}{[\mathfrak{P}:\mathfrak{P} \cap ^{w_1}\mathfrak{P}]^{1/2}}$.\par
				
				We have seen before that $\mathfrak{P}= K_{-,1} \mathfrak{P}_0 K_{+,0}, ^{w_1}\mathfrak{P}= K_{-,2} \mathfrak{P}_0 K_{+,-1}$. So $\mathfrak{P} \cap ^{w_1}\mathfrak{P}= K_{-,2} \mathfrak{P}_0K_{+,0}$. Hence
				
				\begin{align*}
				[\mathfrak{P}:\mathfrak{P} \cap ^{w_1}\mathfrak{P}]&=|\frac{K_{-,1}}{K_{-,2}}|\\
				&=|\lbrace X \in \mathrm{M}_n(k_E) \mid X= ^t{\overline{X}} \rbrace|\\
				&=q^{\frac{(n)(n+1)}{2}}\\
				&=q^{\frac{n^2+n}{2}}.
				\end{align*}\par
				
				So
				\begin{center}
					$\lambda_1=\frac{q^{n/4}}{[\mathfrak{P}:\mathfrak{P} \cap ^{w_1}\mathfrak{P}]^{1/2}}=\frac{q^{n/4}}{q^{\frac{n^2+n}{4}}(\rho(-1))^{1/2}}.$
				\end{center}\par
				
				Hence
				\begin{align*}
				(\phi_0\phi_1)(\zeta)&=(\lambda_0\psi_0)(\lambda_1\psi_1)(\zeta)\\
				&=(\lambda_0\lambda_1)(\psi_0\psi_1)(\zeta)\\
				&=\frac{q^{n/4}}{q^{\frac{n^2-n}{4}}}\frac{q^{n/4}}{q^{\frac{n^2+n}{4}}(\rho(-1))^{1/2}}1_W^{\vee}\\
				&=\frac{q^\frac{n-n^2}{2}1_W^{\vee}}{(\rho(-1))^{1/2}}\\
				&=\frac{q^\frac{n-n^2}{2}1_W^{\vee}}{(\rho(-1))^{1/2}}.
				\end{align*}\par
				
				As $-1 \in Z(\mathfrak{P})$ and $\rho^{\vee}$ is a representation of $\mathfrak{P}$, so $\rho^{\vee}(-1)= \omega_{\rho^{\vee}}(-1)$ where $\omega_{\rho^{\vee}}$ is the central character of $\mathfrak{P}$. Now $1=\omega_{\rho^{\vee}}(1)=(\omega_{\rho^{\vee}}(-1))^2$, so 
				$\rho^{\vee}(-1)= \omega_{\rho^{\vee}}(-1)= \pm 1$. We have seen before that $\lambda_1= \frac{q^{n/2}-1}{a_1}$ and $a_1 \in \mathbb{R}, a_1>0$, so $\lambda_1 >0$. But we know that $\lambda_1=\frac{q^{n/4}}{[\mathfrak{P}:\mathfrak{P} \cap ^{w_1}\mathfrak{P}]^{1/2}}=\frac{q^{n/4}}{q^{\frac{n^2+n}{4}}(\rho(-1))^{1/2}}$, hence $\rho^{\vee}(-1)=1$.\par
				
				Recall $g_i = q^{-n/4}\phi_i$ for $i=0,1$. We know that $\phi_i^2= (q^{n/2}-1)\phi_i +q^{n/2}$ for $i=0,1$. So for $i=0,1$ we have
				
				\begin{align*}
				g_i^2 &= q^{-n/2}\phi_i^2\\
				&=q^{-n/2}((q^{n/2}-1)\phi_i +q^{n/2})\\
				&=(1-q^{-n/2})\phi_i+1\\
				&=(1-q^{-n/2})q^{n/4}g_i+1\\
				&=(q^{n/4}- q^{-n/4})g_i+1.
				\end{align*}\par
				
				So $g_0g_1= q^{-n/2}\phi_0\phi_1 \Longrightarrow (g_0g_1)(\zeta)=q^{-n/2}(\phi_0\phi_1)(\zeta)= q^{-n/2}\frac{q^\frac{n-n^2}{2}1_W^{\vee}}{(\rho(-1))^{1/2}}=q^\frac{-n^2}{2}1_W^{\vee}$. From the earlier discussion in this section we have $T_P(\alpha)(\zeta)=\delta_P^{1/2}(\zeta)1_{W^{\vee}}$. From section 9.1, we have $\delta_P(\zeta)=  q^{-n^2}$. Hence $\delta_P^{1/2}(\zeta)= q^{-n^2/2}$. Therefore $(g_0g_1)(\zeta)= T_P(\alpha)(\zeta)$. So $(g_0g_1)(\zeta)= T_P(\alpha)(\zeta)$. We have $\text{supp}(T_P(\alpha))= \mathfrak{P}\zeta\mathfrak{P}$. As $\text{supp}(g_i)=\mathfrak{P}w_i\mathfrak{P}$, Lemma \ref{lem_10}  gives $\text{supp}(g_0g_1)= \mathfrak{P}\zeta\mathfrak{P}$. Therefore $g_0g_1= T_P(\alpha)$.
			\end{proof}
			
			\subsection{Calculation of $m_L(\pi\nu)$}
			
			Note $\pi\nu$ lies in $\mathfrak{R}^{[L,\pi]_L}(L)$. Recall $m_L$ is an equivalence of categories. As $\pi\nu$ is an irreducible representation of $L$, it follows that $m_L(\pi\nu)$ is a simple $\mathcal{H}(L,\rho_0)$-module. In this section, we identify the simple $\mathcal{H}(L,\rho_0)$-module corresponding to $m_L(\pi\nu)$. Calculating $m_L(\pi\nu)$ will be useful in answering the question in next section.\par
			
			From section 2.5, we know that $\pi =c$-$Ind_{\widetilde{\mathfrak{P}_0}}^L \widetilde{\rho_0}$, where $\widetilde{\mathfrak{P}_0}=\langle \zeta \rangle \mathfrak{P}_0, \widetilde{\rho_0}(\zeta^k j)=\rho_0(j)$ for $j \in \mathfrak{P}_0, k \in \mathbb{Z}$. Let us recall that $\nu$ is unramified character of $L$ from section 1. Let $V$ be space of $\pi \nu$ and $W$ be space of $\rho_0$. Recall $m_L(\pi\nu)= \mathrm{Hom}_{\mathfrak{P}_0}(\rho_0,\pi \nu)$. Let $f \in \mathrm{Hom}_{\mathfrak{P}_0}(\rho_0,\pi \nu)$. As $\mathfrak{P}_0$ is a compact open subgroup of $L$ and $\nu$ is an unramified character of $L$, so $\nu(j)=1$ for $j \in \mathfrak{P}_0$. We already know that $\alpha \in \mathcal{H}(L,\rho_0)$ with support of $\alpha$ being $\mathfrak{P}_0\zeta$ and $\alpha(\zeta)= 1_{W^{\vee}}$. Let $w \in W$ and we have seen in section 9.2 that the way  $\mathcal{H}(L,\rho_0)$ acts on $\mathrm{Hom}_{\mathfrak{P}_0}(\rho_0,\pi \nu)$ is given by:

			\begin{align*}
			(\alpha.f)(w)&= \int_L(\pi \nu)(l)f(\alpha^{\vee}(l^{-1})w)dl\\
			&=\int_L(\pi \nu)(l)f((\alpha(l)){\vee}w)dl\\
			&=\int_{\mathfrak{P}_0}(\pi \nu)(p\zeta)f((\alpha(p\zeta)){\vee}w)dp\\
			&=\int_{\mathfrak{P}_0}(\pi \nu)(p\zeta)f((\rho_0^{\vee}(p)\alpha(\zeta)){\vee}w)dp\\
			&=\int_{\mathfrak{P}_0}(\pi \nu)(p\zeta)f((\rho_0^{\vee}(p)1_{W^{\vee}}){\vee}w)dp\\
			&=\int_{\mathfrak{P}_0}(\pi \nu)(p\zeta)f((\rho_0^{\vee}(p)){\vee}w)dp\\
			&=\int_{\mathfrak{P}_0}\pi(p\zeta) \nu(p\zeta)f((\rho_0^{\vee}(p)){\vee}w)dp\\
			&=\int_{\mathfrak{P}_0}\pi(p\zeta) \nu(\zeta)f((\rho_0^{\vee}(p)){\vee}w)dp.\\
			\end{align*}\par

			Now $\langle, \rangle \colon W \times W^{\vee} \longrightarrow \mathbb{C}$ is given by: $\langle w, \rho_0^{\vee}(p)w^{\vee}\rangle = \langle \rho_0(p^{-1})w, w^{\vee} \rangle$ for $p \in \mathfrak{P}_0, w \in W$. So we have $(\rho_0^{\vee}(p))^{\vee}=\rho_0(p^{-1})$ for $p \in \mathfrak{P}_0$. Hence
			
			\begin{align*}
			(\alpha.f)(w)&=\int_{\mathfrak{P}_0}\pi(p\zeta) \nu(\zeta)f(\rho_0(p^{-1})w) dp.
			\end{align*}\par
			
			As $f \in \mathrm{Hom}_{\mathfrak{P}_0}(\rho_0,\pi \nu)$, so $(\pi \nu)(p)f(w) = f(\rho_0(p)w)$ for $p \in \mathfrak{P}_0, w \in W$. Hence
			
			\begin{align*}
			(\alpha.f)(w)&=\nu(\zeta)\int_{\mathfrak{P}_0}\pi(p\zeta)(\pi \nu)(p^{-1})f(w)dp\\
			&=\nu(\zeta)\int_{\mathfrak{P}_0}\pi(p\zeta)\pi(p^{-1})\nu(p^{-1})f(w)dp\\
			&=\nu(\zeta)\int_{\mathfrak{P}_0}\pi(p\zeta)\pi(p^{-1})f(w)dp.
			\end{align*}\par
			
			Now as $\pi = c$-$Ind_{\widetilde{\mathfrak{P}_0}}^L \widetilde\rho_0$ and $\widetilde{\mathfrak{P}_0}=\langle \zeta \rangle \mathfrak{P}_0, \widetilde{\rho_0}(\zeta^k j)=\rho_0(j) $ for $j \in \mathfrak{P}_0, k \in \mathbb{Z}$, so $\pi(p\zeta)=\pi(p)\widetilde{\rho_0}(\zeta)=\pi(p)\rho_0(1)=\pi(p)1_{W^{\vee}}$. Therefore
			
			\begin{align*}
			(\alpha.f)(w)&=\nu(\zeta)\int_{\mathfrak{P}_0}\pi(p)\pi(p^{-1})f(w)dp\\
			&=\nu(\zeta)f(w)\text{Vol}(\mathfrak{P}_0)\\
			&=\nu(\zeta)f(w)
			\end{align*}
			
			So $(\alpha.f)(w)=\nu(\zeta)f(w)$ for $w \in W$. So $\alpha$ acts on $f$ by multiplication by $\nu(\zeta)$. Recall for $\lambda \in \mathbb{C}^{\times}$, we write $\mathbb{C}_{\lambda}$ for the $\mathcal{H}(L,\rho_0)$-module with underlying abelian group $\mathbb{C}$ such that $\alpha.z=\lambda z$ for $z \in \mathbb{C}_{\lambda}$. Therefore $m_L(\pi \nu) \cong \mathbb{C}_{\nu(\zeta)}$.\par

			\section{Answering the question}
			
			Recall the following commutative diagram which we described earlier.
			
			\[
			\begin{CD}
			\mathfrak{R}^{[L,\pi]_G}(G)    @>m_G>>    \mathcal{H}(G,\rho)-Mod\\
			@A\iota_P^GAA                                    @A(T_P)_*AA\\
			\mathfrak{R}^{[L,\pi]_L}(L)    @>m_L>>     \mathcal{H}(L,\rho_0)-Mod
			\end{CD}
			\tag{CD}\]\par
			
			Observe that $\pi\nu$ lies in $\mathfrak{R}^{[L,\pi]_L}(L)$. From the above commutative diagram, it follows that $\iota_P^G(\pi\nu)$ lies in $\mathfrak{R}^{[L,\pi]_G}(G)$ and $m_G(\iota_P^G(\pi\nu))$ is an $\mathcal{H}(G,\rho)$-module. Recall $m_L(\pi \nu) \cong \mathbb{C}_{\nu(\zeta)}$ as $\mathcal{H}(L,\rho_0)$-modules. From the above commutative diagram, we have $m_G(\iota_P^G(\pi\nu)) \cong (T_P)_*(\mathbb{C}_{\nu(\zeta)})$ as $\mathcal{H}(G,\rho)$-modules. Thus to determine the unramified characters $\nu$ for which $\iota_P^G(\pi\nu)$ is irreducible, we have to understand when $(T_P)_*(\mathbb{C}_{\nu(\zeta)})$ is a simple $\mathcal{H}(G,\rho)$-module.\par

			Using notation on page 438 in \cite{MR2531913}, we have  $\gamma_1=\gamma_2= q^{n/2}$ for unramified case when $n$ is odd and $\gamma_1=\gamma_2= q^{n/4}$ for ramified case when $n$ is even. As in Proposition 1.6 of \cite{MR2531913}, let $\Gamma=\{\gamma_1\gamma_2,-\gamma_1\gamma_2^{-1}, -\gamma_1^{-1}\gamma_2,(\gamma_1\gamma_2)^{-1}\}$. So by Proposition 1.6 in \cite{MR2531913}, 
			$(T_P)_*(\mathbb{C}_{\nu(\zeta)})$ is a simple $\mathcal{H}(G,\rho)$-module $\Longleftrightarrow \nu(\zeta) \notin \Gamma$. Recall $\pi= c $-$ Ind_{Z(L)\mathfrak{P}_0}^L \widetilde{\rho_0}$ where $\widetilde{\rho_0}(\zeta^k j)=\rho_0(j)$ for $j \in \mathfrak{P}_0, k \in \mathbb{Z}$ and  $\rho_0=\tau_{\theta}$ for some regular character $\theta$ of $l^{\times}$ with $[l:k_E]=n$. Hence we can conclude that $\iota_P^G(\pi \nu)$ is irreducible for the unramified case when $n$ is odd $\Longleftrightarrow \nu(\zeta) \notin \{q^n,q^{-n},-1\}$, $\theta^{q^{n+1}}= \theta^{-q}$ and  $\iota_P^G(\pi \nu)$ is irreducible for the ramified case when $n$ is even $\Longleftrightarrow \nu(\zeta) \notin \{q^{n/2},q^{-n/2},-1\}$, $\theta^{q^{n/2}}= \theta^{-1}$.\par
			
			Recall that in the unramified case when $n$ is even or in the ramified case when $n$ is odd we have  $N_G(\rho_0)= Z(L) \mathfrak{P}_0$. Thus $\mathfrak{I}_G(\rho)= \mathfrak{P}(Z(L)\mathfrak{P}_0)\mathfrak{P}= \mathfrak{P}Z(L)\mathfrak{P}$.\par
			
			From Corollary 6.5 in \cite{MR1486141} which states that if $\mathfrak{I}_G(\rho) \subseteq \mathfrak{P}L\mathfrak{P}$ then \[T_P \colon \mathcal{H}(L,\rho_0) \longrightarrow \mathcal{H}(G,\rho)\] is an isomorphism of $\mathbb{C}$-algebras. As we have $\mathfrak{I}_G(\rho)=\mathfrak{P}Z(L) \mathfrak{P}$ in the unramified case when $n$ is even or in the ramified case when $n$ is odd, so $\mathcal{H}(L,\rho_0) \cong \mathcal{H}(G,\rho)$ as $\mathbb{C}$-algebras. So from the commutative diagram (CD) on the previous page, we can conclude that $\iota_P^G(\pi \nu)$ is irreducible for any unramified character $\nu$ of $L$. That proves Theorem \ref{the_1}. 
		
\nocite{MR1235019}
\nocite{MR1371680}
\nocite{MR1486141}
\nocite{MR1643417}
\nocite{MR2276353}
\nocite{MR2531913}
\nocite{MR1266747}
\nocite{Thomas2014}
\nocite{green}
\nocite{heiermann_2011}
\nocite{heiermann_2017}
\nocite{murnaghan_repka_1999}
\bibliographystyle{plain}
\bibliography{sandeep_arxiv}

\begin{thebibliography}{10}

\bibitem{MR1643417}
Colin~J. Bushnell and Philip~C. Kutzko.
\newblock Smooth representations of reductive {$p$}-adic groups: structure
  theory via types.
\newblock {\em Proc. London Math. Soc. (3)}, 77(3):582--634, 1998.

\bibitem{MR1266747}
David Goldberg.
\newblock Some results on reducibility for unitary groups and local {A}sai
  {$L$}-functions.
\newblock {\em J. Reine Angew. Math.}, 448:65--95, 1994.

\bibitem{green}
J.~A. Green.
\newblock The characters of the finite general linear groups.
\newblock {\em Transactions of the American Mathematical Society},
  80(2):402–447, 1955.

\bibitem{heiermann_2011}
Volker Heiermann.
\newblock Opérateurs d’entrelacement et algèbres de hecke avec paramètres
  d’un groupe réductif p-adique: le cas des groupes classiques.
\newblock {\em Selecta Mathematica}, 17(3):713–756, 2011.

\bibitem{heiermann_2017}
Volker Heiermann.
\newblock Local langlands correspondence for classical groups and affine hecke
  algebras.
\newblock {\em Mathematische Zeitschrift}, 287(3-4):1029–1052, 2017.

\bibitem{MR2276353}
Philip Kutzko and Lawrence Morris.
\newblock Level zero {H}ecke algebras and parabolic induction: the {S}iegel
  case for split classical groups.
\newblock {\em Int. Math. Res. Not.}, pages Art. ID 97957, 40, 2006.

\bibitem{MR2531913}
Philip Kutzko and Lawrence Morris.
\newblock Explicit {P}lancherel theorems for {$\mathcal H(q_1,q_2)$} and
  {$\mathbb S\mathbb L_2(F)$}.
\newblock {\em Pure Appl. Math. Q.}, 5(1):435--467, 2009.

\bibitem{MR1486141}
Philip~C. Kutzko.
\newblock Smooth representations of reductive {$p$}-adic groups: an
  introduction to the theory of types.
\newblock In {\em Geometry and representation theory of real and {$p$}-adic
  groups ({C}\'{o}rdoba, 1995)}, volume 158 of {\em Progr. Math.}, pages
  175--196. Birkh\H{a}user Boston, Boston, MA, 1998.

\bibitem{Thomas2014}
Thomas~Laebel Madsen.
\newblock {\em Some types and covers for quaternionic hermitian groups}.
\newblock PhD thesis, University of Oklahoma, 2014.

\bibitem{MR1235019}
Lawrence Morris.
\newblock Tamely ramified intertwining algebras.
\newblock {\em Invent. Math.}, 114(1):1--54, 1993.

\bibitem{MR1371680}
Allen Moy and Gopal Prasad.
\newblock Jacquet functors and unrefined minimal {$K$}-types.
\newblock {\em Comment. Math. Helv.}, 71(1):98--121, 1996.

\bibitem{murnaghan_repka_1999}
Fiona Murnaghan and Joe Repka.
\newblock Reducibility of some induced representations of $p$-adic unitary
  groups.
\newblock {\em Transactions of the American Mathematical Society},
  351(1):193–210, 1999.

\end{thebibliography}

\end{document}